\newtheorem{theorem}{Theorem}
\newtheorem{corollary}[theorem]{Corollary}
\newtheorem{lemma}[theorem]{Lemma}
\newtheorem{proposition}[theorem]{Proposition}
\newtheorem{prop}[theorem]{Proposition}
\newtheorem{remark}[theorem]{Remark}
\newcommand{\suppp}{{\rm supp}\sp{+}}
\newcommand{\suppm}{{\rm supp}\sp{-}}
\def\0{\mbox{\bf 0}}
\def\1{\mbox{\bf 1}}
\def\phi{\varphi}
\def\epsilon{\varepsilon}
\def\R{{\mathbb{R}}}
\def\Z{{\mathbb{Z}}}
\def\P*{{\rm P}_*}
\begin{document}

\begin{frontmatter}



 \title{
An Improved  Proximity Bound for Bike-Dock Reallocation Problem\\
in Bike Sharing System
}

\author{Akiyoshi Shioura}
\ead{shioura.a.aa@m.titech.ac.jp}
\address{Department of Industrial Engineering and Economics, Institute of Science Tokyo,
Tokyo 152-8550, Japan}

\begin{abstract}
 We consider a class of nonlinear
integer programming problems arising from
re-allocation of dock-capacity in a bike sharing system.
 The main aim of this note is to derive an improved proximity bound for the problem
and its scaled variant.
 This makes it possible to refine the time bound for the 
polynomial-time proximity-scaling algorithm by Freund et al.~(2022).
\end{abstract}

\begin{keyword}
resource allocation problem, steepest descent algorithm,
proximity-scaling algorithm, multimodular function
\end{keyword}

\end{frontmatter}



\section{Introduction.}
\setcounter{equation}{0}

 We deal with a class of nonlinear integer programming problems 
arising from re-allocation of dock-capacity in a bike sharing system,
which we refer to as the \textit{dock re-allocation problem}.
 This problem is originally discussed by Freund, Henderson, and Shmoys \cite{FHS2022},
where 
a mathematical formulation of the dock re-allocation problem
is provided 
and various algorithms for solving the problem are proposed. 
 Moreover, Freund et al.~\cite{FHS2022}
apply the proposed algorithms to data sets from several cities in the US
to investigate the effect of the (re)allocation of dock-capacity. 
 In this paper, we focus on the algorithmic results for 
the dock re-allocation problem.

 The formulation of the dock re-allocation problem is explained below.
 Let $N = \{1, 2, \ldots, n\}$ denotes the set of bike stations.
 We use two kinds of variables $d(i)$ and $b(i)$ for each station $i \in N$.
 The total number of docks at station $i$ is given by $d(i)$ + $b(i)$,
where 
the variable $d(i)$ denotes the number of open docks (without bikes),
while $b(i)$ is the number of docks with bikes, i.e.,
each of $b(i)$ docks is equipped with one bike.
 Initially, $d(i)$ and $b(i)$ are set to $\bar{d}(i)$ and 
$\bar{b}(i)$, respectively,
and we need to re-allocate docks and bikes appropriately 
to improve user satisfaction. 

 Function $c_i: \Z^2_+ \to \R$ in variables $d(i)$ and $b(i)$
represents the expected  number of dissatisfied users at station $i$
(see \cite{FHS2022} for the precise definition of 
function $c_i$),
and it is shown to have the property of \textit{multimodularity}
(see Section~2 for the definition of a multimodular function). 
 We want to minimize 
the total expected  number of dissatisfied users
$c(d, b) \equiv \sum_{i=1}^n c_i(d(i), b(i))$ 
by re-allocation of docks and bikes.

 When re-allocating docks and bikes among stations,
we need to satisfy several conditions,
where $D = \sum_{i=1}^n \bar{d}(i)$
and $B = \sum_{i=1}^n \bar{b}(i)$:
\begin{itemize}
\item The total number of docks
should be the same, i.e.,
$\sum_{i=1}^n (d(i) + b(i))=D+B$.

\item 
 The total number of bikes cannot be increased,
i.e.,     $\sum_{i=1}^n b(i) \le B$.

\item
 The difference between the new and old allocation 
of docks should be small.
 This condition is represented as
$\| (d+b) - (\bar{d} + \bar{b})\|_1
 \le  2\gamma$ 
by using $\ell_1$-distance between
the new allocation $d+b$ and the old allocation  $\bar{d}+\bar{b}$
and some positive constant $\gamma$.

\item
The new dock allocation $d+b$ should be between given
lower and upper bounds, i.e., $\ell \le d+b \le u$.
\end{itemize}
In summary,  the dock re-allocation problem, denoted as DR, 
is formulated  as follows:
\[
 \begin{array}{l|lll}
 & \mbox{Minimize }  & 
\displaystyle c(d, b) \\
& \mbox{subject to } &    \sum_{i=1}^n (d(i) + b(i)) = D+ B,\\
& &   \sum_{i=1}^n  b(i) \le  B,\\
& & 
\| (d+b) - (\bar{d} + \bar{b})\|_1
 \le  2\gamma,\\
& & \ell \le d + b \le u,\\
& &    d, b \in \Z^n_+.
 \end{array}
\]

 In \cite{FHS2022}, 
a steepest descent (or greedy) algorithm for the problem DR
is proposed, which repeatedly updates a constant number of variables by $\pm 1$.
 It is shown by using the multimodularity of the objective function that
the algorithm finds an optimal solution in at most $\gamma$ iterations.
 Hence, the running time of the steepest descent algorithm is pseudo-polynomial.

 To speed up of the algorithm, proximity-scaling technique is
applied to the steepest descent algorithm in \cite{FHS2022}
(see, e.g., \cite{Hoch94,HochShant90,MST11,Shioura04} 
for algorithms based on proximity-scaling technique).
 Time bound of the proximity-scaling algorithm in \cite{FHS2022}
is dependent on a proximity bound for the distance between optimal solutions of 
the problem DR and its scaled version DR$(\lambda)$.
 The scaled problem DR$(\lambda)$ with integer parameter $\lambda$
is the problem obtained from DR by restricting the values of 
 variables $d(i)$ and $b(i)$ to be integer multiples of $\lambda$ for $i \in N$
(see Section 2 for a more precise definition of DR$(\lambda)$).
 It is shown that
the proximity-scaling algorithm runs in 
$O(n^2 M \log n  \log (D+B))$ time,
provided that the $\ell_1$-distance between
optimal solutions of DR and DR$(\lambda)$ is $O(M\lambda)$
for some $M>0$.
 In addition, it is shown that
the proximity bound $M$ is $O(n^7)$ 
(see Lemma EC.3 of the online appendix in \cite{FHS2022}).

\begin{prop}[{\cite{FHS2022}}]
\label{proo:FHS-proximity}
 Let $\lambda$ be a positive integer, and 
$(d,b)$ be an optimal solution of  {\rm DR($\lambda$)}.
 Then, there exists an optimal solution
$(d^*, b^*)$  of  {\rm DR} such that
$\| (d^*+b^*) - (d+b)\|_1 \le  16n^6 (n+4) \lambda$.
\end{prop}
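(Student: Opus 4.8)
The plan is to pass from the multimodular objective into the framework of discrete convex analysis and then drive the estimate with a scaling‑proximity theorem for \Lnat-convex minimization. For each station $i$ I would apply the unimodular coordinate change that introduces the total‑dock variable $s(i) = d(i)+b(i)$ and keeps $d(i)$, so that $c_i$ is replaced by $g_i(d(i),s(i)) = c_i(d(i),\,s(i)-d(i))$; the multimodularity of $c_i$ on $\Z^2$ is exactly the statement that each $g_i$ is \Lnat-convex. Since \Lnat-convexity is preserved under taking direct sums of functions in disjoint variables, the transformed cost $\sum_{i=1}^n g_i(d(i),s(i))$ is \Lnat-convex on $\Z^{2n}$. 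The transformation is integral and maps $\lambda\Z^{2n}$ onto itself, so optimal solutions of DR and DR($\lambda$) correspond to optimal solutions of the transformed problem over $\Z^{2n}$ and over $\lambda\Z^{2n}$, respectively; moreover the quantity to be bounded, $\|(d^*+b^*)-(d+b)\|_1 = \|s^*-s\|_1$, is measured purely in the $s$-coordinates, which is where \Lnat-proximity naturally lives.

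Next I would analyze the feasible region in the variables $(d,s)$. The box constraints $\ell \le s \le u$ together with $0 \le d(i) \le s(i)$ are of box‑and‑difference type, hence describe an \Lnat-convex set. The remaining constraints are global: the equality $\sum_i s(i) = D+B$, the bike budget $\sum_i b(i) = \sum_i (s(i)-d(i)) \le B$ (equivalently $\sum_i d(i) \ge D$), and, writing $\bar s = \bar d+\bar b$ and using $\sum_i \bar s(i) = D+B = \sum_i s(i)$, the $\ell_1$-budget $\|s-\bar s\|_1 \le 2\gamma$, which rewrites as $\sum_{i:\,s(i)>\bar s(i)} (s(i)-\bar s(i)) \le \gamma$. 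Together these give the $s$-coordinates a base‑polyhedron (generalized‑polymatroid) structure and leave a single coupling between the two coordinate blocks. I would fold the two global inequality budgets into the objective as nondecreasing convex penalties, which preserves \Lnat-convexity, so that only the one shared equality hyperplane $\sum_i s(i) = D+B$ must be carried through the proximity argument.

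With this reduction the heart of the argument is a closest‑point exchange. Among all optimal solutions $(d^*,b^*)$ of DR I would select one minimizing $\|s^*-s\|_1$, assume for contradiction that this distance exceeds $16n^6(n+4)\lambda$, and then use the exchange property of \Lnat-convex functions to construct a feasible move on the difference vector $s^*-s$ (supported on the current positive and negative parts) that does not increase the transformed cost while strictly decreasing $\|s^*-s\|_1$, contradicting the choice of $(d^*,b^*)$. The underlying \Lnat-scaling‑proximity theorem supplies, within a single layer of the constraint structure, a distance contribution of order $O(\lambda)$ per coordinate; the point is then to iterate the exchange across the interacting layers — the base‑polyhedron cap from the $\ell_1$-budget on $s$, the bike budget, and their coupling through $0 \le d(i)\le s(i)$ — and it is the bookkeeping of these nested iterations along the various coordinate directions that accumulates into the polynomial factor and yields the explicit constant $16n^6(n+4)$.

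The step I expect to be the main obstacle is precisely this accumulation. A single invocation of \Lnat-proximity gives only an $O(n\lambda)$ bound, whereas the coupled global constraints force the exchange to be applied repeatedly and along different directions, and a careless analysis would either inflate the bound uncontrollably or fail to guarantee termination. The delicate part is to exhibit a potential — the $\ell_1$-distance with a suitable secondary tie‑breaker — that strictly decreases at every exchange step, so that the process is finite, and to bound the number of steps by a polynomial in $n$ rather than by $D+B$ or $\gamma$. Showing that the interaction of the budget $\|s-\bar s\|_1 \le 2\gamma$ with the scaling contributes only a factor polynomial in $n$, independent of $\gamma$ and of $D+B$, is the crux of the whole estimate.
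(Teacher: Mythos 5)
First, a point of orientation: the paper does not prove Proposition~\ref{proo:FHS-proximity} at all --- it is quoted from Lemma~EC.3 of the online appendix of \cite{FHS2022} --- and the paper's own proximity analysis (Theorems~\ref{thm:new-prox} and~\ref{thm:bdr-proximity}, with the sharper bound $10n\lambda$) proceeds by entirely elementary means: exchange inequalities for multimodular functions (Proposition~\ref{prop:c-ineq}), a nearest-optimal-solution argument (Lemma~\ref{lem:mono-dec}), and a case analysis over the index sets $I_1,\dots,I_6$ split along $P$ and $Q$. No discrete-convex-analysis transform appears anywhere. Your proposal must therefore stand on its own, and it has a genuine gap at its central reduction step.

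Your change of variables is fine: $g_i(d(i),s(i))=c_i(d(i),s(i)-d(i))$ is L$^\natural$-convex exactly when $c_i$ is multimodular \cite{Murota05}, and box/difference constraints such as $0\le d(i)\le s(i)$, $\ell\le s\le u$ are L$^\natural$-convex. The failure is the claim that the global budgets can be ``folded into the objective as nondecreasing convex penalties, which preserves L$^\natural$-convexity.'' A convex penalty on a coordinate sum is \emph{not} L$^\natural$-convex: for $h(s)=\psi\bigl(\sum_i s(i)\bigr)$ with $\psi$ convex, discrete midpoint convexity applied to $s=(1,0,0,\dots)$ and $s'=(0,1,0,\dots)$ requires $2\psi(1)\ge\psi(2)+\psi(0)$, which fails for every strictly convex $\psi$, and for the natural piecewise-linear penalty $\psi(t)=\max\{0,t-1\}$ it reads $0\ge 1$. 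The same counterexample kills penalties on the bike budget $\sum_i d(i)\ge D$ and on the $\ell_1$-budget $\sum_{i:\,s(i)>\bar s(i)}(s(i)-\bar s(i))\le\gamma$; such functions are supermodular, i.e., on the wrong side of the L-world. Nor can you keep the hyperplane $\sum_i s(i)=D+B$ as a hard constraint: the set $\{s : s(N)=D+B\}$ is an M-convex set, not an L$^\natural$-convex one (it is not closed under rounded midpoints), so no L$^\natural$ proximity or scaling theorem applies to the reduced problem either. This mix --- an L$^\natural$-convex objective after your transform sitting over M-type (coordinate-sum) constraints --- is precisely the structural obstruction that forced both \cite{FHS2022} and \cite{Shioura2022} into bespoke exchange arguments. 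Finally, even granting some framework, your proposal never actually produces the bound: the constant $16n^6(n+4)$ is the entire content of the statement, and deferring it to ``bookkeeping of these nested iterations \dots\ accumulates into the polynomial factor'' leaves the quantitative claim unproved.
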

\noindent
This implies that the running time of
the proximity-scaling algorithm is
$O(n^9 \log n  \log (D+B))$, which is quite large. 
 An alternative algorithm for the problem DR based on a different approach is
proposed by Shioura \cite{Shioura2022}, which runs in
$O(n \log n \log ((D+B)/n)\log B)$.

 The main aim of this paper is to 
derive an alternative, smaller proximity
bound for DR and DR$(\lambda)$ to 
improve the running time 
of the proximity-scaling algorithm in \cite{FHS2022}.

\begin{theorem}
\label{thm:new-prox}
 Let $\lambda$ be a positive integer, and 
$(d, b)$ be an optimal solution of  {\rm DR($\lambda$)}.
 Then, there exists an optimal solution
$(d^*, b^*)$  of  {\rm DR} such that
\[
\| (d^*+b^*) - (d+b)\|_1 \le   10 n \lambda.
\]
\end{theorem}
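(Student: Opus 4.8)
The plan is to first change coordinates so that the multimodular objective is exposed as an \Lnat-convex function of more convenient quantities. For each station $i$ set $x(i)=d(i)+b(i)$ and retain $d(i)$; the map $(d,b)\mapsto(d,x)$ is unimodular, so it sends $(\lambda\Z)^{2n}$ onto itself and the quantity to be bounded is precisely $\|(d^*+b^*)-(d+b)\|_1=\|x^*-x\|_1$, the change in the total–dock vector. Since each $c_i$ is multimodular, the standard characterization of multimodularity via difference coordinates gives that $g_i(d(i),x(i)):=c_i(d(i),x(i)-d(i))$ is \Lnat-convex, whence $G(d,x):=\sum_i g_i(d(i),x(i))$ is \Lnat-convex on $\Z^{2n}$. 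In these coordinates DR becomes the minimization of $G$ subject to $\sum_i x(i)=D+B$, $\sum_i d(i)\ge D$, $0\le d(i)\le x(i)$, $\ell\le x\le u$ and $\|x-(\bar d+\bar b)\|_1\le 2\gamma$, while DR($\lambda$) imposes in addition $d,x\in(\lambda\Z)^n$.

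With this reformulation I would run the standard closest-optimum exchange argument of discrete convex analysis, but tracking distance only in the $x$-coordinates. Let $(d,x)$ be an optimum of DR($\lambda$) and let $(d^*,x^*)$ be an optimum of DR chosen to minimize $\|x^*-x\|_1$, and set $z=x^*-x$. Both solutions satisfy $\sum_i x(i)=D+B$, so $\sum_i z(i)=0$ and $\|z\|_1=2\sum_{i:\,z(i)>0}z(i)$; hence, if $\|z\|_1>10n\lambda$ then $\sum_{z(i)>0}z(i)>5n\lambda$, forcing a surplus station $p$ with $z(p)>5\lambda$ and, symmetrically, a deficit station $q$ with $z(q)<-5\lambda$. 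I would then transfer $\lambda$ docks from $p$ to $q$ in $x^*$, accompanied by the best re-split of bikes and empty docks at $p$ and $q$. Such a balanced transfer preserves $\sum_i x(i)=D+B$ and strictly decreases $\|x^*-x\|_1$ by $2\lambda$; the large gaps $|z(p)|,|z(q)|>5\lambda$ leave room against the box $\ell\le x\le u$, and the budget $\sum_i b(i)\le B$ is maintained because the extra docks created at $q$ may be left empty. If in addition the transfer does not increase $G$, then $(d^*,x^*)$ is contradicted, either as a strictly better DR solution or as an equally good but strictly closer one.

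The crux, and the step I expect to be hardest, is showing that the transfer does not raise the cost; this is where both optimalities must be used simultaneously. Because $(d,x)$ is optimal on the $\lambda$-lattice no feasible $\lambda$-step improves $G$ there, while $(d^*,x^*)$ admits no improving unit step. Feeding the surplus/deficit pair $(p,q)$ into the discrete-midpoint (exchange) inequality for the \Lnat-convex function $G$, evaluated at the two optima $(d,x)$ and $(d^*,x^*)$, should bound the cost change of the transfer on $x^*$ by a nonnegative quantity controlled by the matching $\lambda$-step at $(d,x)$, forcing the transfer to be cost-nonincreasing once $|z(p)|$ and $|z(q)|$ exceed a per-station threshold of order $\lambda$; the bookkeeping should allow taking this threshold to be $5\lambda$, which via $\|z\|_1=2\sum_{z>0}z(i)$ produces the explicit bound $\|x^*-x\|_1\le 10n\lambda$. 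The two points I expect to fight with are verifying that the bike re-split at $p$ and $q$ can be realized within the budget after the dock transfer, and that the step stays inside the $\ell_1$-ball $\|x-(\bar d+\bar b)\|_1\le 2\gamma$; both should follow by selecting $(p,q)$ with the right sign relative to $\bar d+\bar b$ given the size of the gaps, but this is exactly where the argument is delicate and where the constant could fail to come down to $10n$.
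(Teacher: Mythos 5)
Your overall strategy---pass to a closest optimal pair, exploit both optimalities through exchange inequalities, and accumulate a per-coordinate $O(\lambda)$ threshold over $n$ coordinates---is the same skeleton as the paper's argument, and the coordinate change $(d,b)\mapsto(d,x)$ turning multimodularity into L$^\natural$-convexity is a legitimate observation. But the two steps you flag as ``delicate'' are not bookkeeping; they are the entire content of the proof, and as stated your central reduction would fail. First, the $\ell_1$-ball constraint $\|x-(\bar d+\bar b)\|_1\le 2\gamma$ cannot be handled by ``selecting $(p,q)$ with the right sign'': the paper must first prove (Proposition \ref{prop:l1-dist-const}) that some optimal solution of DR has a prescribed sign pattern of $x-(\bar d+\bar b)$ given by a partition $N=P\cup Q$ computed from a relaxed problem, which converts the ball into two linear constraints $x(P)\le\xi_P$, $x(Q)\ge\xi_Q$ held with equality at every optimum. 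A transfer between a surplus $p$ and a deficit $q$ is then only unconstrained when $p$ and $q$ lie on the correct sides of this partition, and nothing in your argument guarantees that the large-gap coordinates you extract from $\|z\|_1>10n\lambda$ can be chosen compatibly with $P$ and $Q$; this is exactly why the paper needs the four cases P1--P4 crossed with Q1--Q4 and the nontrivial exclusion of P3--Q4 and P4--Q3 (Lemma \ref{lem:3p6p4q5q-empty}).

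Second, a single surplus/deficit pair in the $x$-coordinates does not yield a feasible improving exchange. The feasible region in $(d,x)$-coordinates is not an L$^\natural$-convex set (it carries the coupling constraints $x(N)=D+B$, $b(N)\le B$, $d,b\ge 0$, and the two $P/Q$ inequalities), so the discrete-midpoint machinery you invoke does not apply off the shelf; the paper instead works with hand-built exchange moves involving up to four or five indices $i,j,h,k,s$ drawn from the six sets $I_1,\dots,I_6$ (Proposition \ref{prop:c-ineq} and Lemma \ref{lem:mono-dec}), precisely because the ``best re-split of bikes and empty docks'' accompanying a dock transfer must itself be certified feasible against $b(N)\le B$ and nonnegativity on both the $\lambda$-lattice side and the integer side. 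Whether such a configuration of indices exists is what Lemmas \ref{lem:setI-2} and \ref{lem:setI-4} establish, and in the residual cases where it does not exist the bound is obtained not by an exchange but by direct counting (Lemmas \ref{lem:p1-q2-bound} and \ref{lem:p1-q3-bound}, where the constant $10$ arises as $8\lambda n+2\lambda|P|$, not from a uniform $5\lambda$ per-station threshold). So the proposal identifies the right destination but leaves unproved the two claims on which it rests, and the simplest version of the transfer argument it describes is not sound without the $P/Q$ reduction and the multi-index case analysis.
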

\noindent
 This theorem shows that the proximity bound $M$ is $O(n)$, 
which is much smaller than the bound $O(n^7)$ in \cite{FHS2022}. 
 Therefore, the estimate of the running time for 
the proximity-scaling algorithm is improved to
$O(n^3 \log n  \log (D+B))$, which is comparable to
the running time $O(n \log n \log ((D+B)/n)\log B)$
of the algorithm in \cite{Shioura2022}.

\begin{remark}\rm
 We can obtain a better time bound
$O(n^2 \log n  \log ((D+B)/n))$ time
by using Theorem \ref{thm:new-prox} and 
a modified version of the proximity-scaling algorithm in \cite{FHS2022}.
See Appendix for details.
\qed
\end{remark}

Due to the page limitation, some of the proofs for theorems and lemmas are
given in Appendix.


\section{Proximity Theorem for Problem DR}
\setcounter{equation}{0}

 In this section, we describe the statement of
the proximity result for the problem DR in more detail.
 Our proximity result refers to an alternative formulation of DR
to be described below, instead of the original formulation.
 To the end of this paper, we denote $y(X) = \sum_{i \in X}y(i)$
for a vector $y \in \R^n$ and $X \subseteq N$.

 Recall that the problem DR is formulated as
\[
 \begin{array}{l|lll}
 & \mbox{Minimize }  & 
\displaystyle c(d, b) \equiv \sum_{i=1}^n c_i(d(i), b(i))\\
& \mbox{subject to } &    d(N)+b(N) = D+ B,\\
& &   b(N) \le  B,\\
& & 
\| (d+b) - (\bar{d} + \bar{b})\|_1
 \le  2\gamma,\\
& & \ell \le d + b \le u,\\
& &    d, b \in \Z^n_+,
 \end{array}
\]
where each $c_i: \Z^2_+ \to \R$ is a \textit{multimodular} function, i.e.,
it satisfies the following inequalities by definition 
(see, e.g., \cite{FHS2022,Murota05}):
\begin{equation*}
\begin{array}{ll}
&
 c_i(\alpha+1, \beta+1) - c_i(\alpha+1, \beta)
 \ge  c_i(\alpha, \beta+1) - c_i(\alpha, \beta)
\\
& \hspace*{60mm}
 (\alpha, \beta \in \Z \mbox{ with }\alpha \ge 0,\ \beta \ge 0),
\\
& 
c_i(\alpha-1, \beta+1) - c_i(\alpha-1, \beta)  \ge  c_i(\alpha, \beta) - c_i(\alpha, \beta-1)
\\
& \hspace*{60mm}
(\alpha, \beta \in \Z \mbox{ with }\alpha \ge 1,\ \beta \ge 1),
\\
& 
c_i(\alpha+1, \beta-1) - c_i(\alpha, \beta-1)  \ge  c_i(\alpha, \beta) -
c_i(\alpha-1, \beta) 
\\
\qquad
&\hspace*{60mm}
(\alpha, \beta \in \Z \mbox{ with }\alpha \ge 1,\ \beta \ge 1).
\end{array}
\end{equation*}

 To obtain an alternative formulation for DR,
consider a relaxed problem of DR obtained 
by removing the $\ell_1$-distance constraint 
$\| (d+b) - (\bar{d} + \bar{b})\|_1 \le  2\gamma$,
and let $(d^\bullet, b^\bullet)$ be 
an optimal solution of the relaxed problem
with the minimum value of $\| (d+b) - (\bar{d} + \bar{b})\|_1$.
 Such an optimal solution can be computed
in $O(n \log n\log ((D+B)/n))$ time \cite[Theorem 6.1]{Shioura2022},
and this running time 
does not affect the running time for solving DR.

 Let 
\[
 P = \{i \in N \mid 
d^\bullet(i) + b^\bullet(i) > \bar{d}(i)+\bar{b}(i) \},
\qquad Q = N \setminus P.
\]

\begin{prop}[cf.~{\cite[Section 6.2]{Shioura2022}}]
\label{prop:l1-dist-const}
There exists an optimal solution $(d, b)$ of \emph{DR}
such that 
\begin{align}
d(i)  + b(i)
\begin{cases}
  \ge \bar{d}(i)+\bar{b}(i) & \mbox{ for }i \in P, \\
 \le \bar{d}(i)+\bar{b}(i) & \mbox{ for }i \in Q.
\end{cases}
\label{eqn:DR-PQ}
\end{align} 
 Moreover, if $(d^\bullet, b^\bullet)$ violates the $\ell_1$-distance constraint,
then, every optimal solution $(d, b)$ of \emph{DR}
satisfies the inequality $\| (d+b) - (\bar{d} + \bar{b})\|_1  \le  2\gamma$  
with equality.
\end{prop}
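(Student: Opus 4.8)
The plan is to reduce everything to the total-dock vector $x = d+b$ and to exploit the discrete convexity hidden in multimodularity. After the substitution $\beta = x(i) - \alpha$, i.e. writing each cost in the variables $(d(i), x(i))$ instead of $(d(i), b(i))$, the three multimodularity inequalities say exactly that $c_i(d(i), x(i)-d(i))$ is an \Lnat-convex function of the pair $(d(i), x(i))$, so the total cost is a separable sum of such functions. Minimizing out the internal split subject to the bike budget, I would set
\[
C(x) = \min\Big\{\textstyle\sum_{i} c_i(d(i), x(i) - d(i)) : 0 \le d \le x,\ x(N) - d(N) \le B\Big\},
\]
so that DR becomes the minimization of $C(x)$ over $\{x : x(N) = D+B,\ \ell \le x \le u,\ \|x - \bar{x}\|_1 \le 2\gamma\}$, and every DR-optimum $(d,b)$ satisfies $c(d,b) = C(d+b)$. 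The one analytic fact I need is that $C$ obeys the sum-preserving exchange of an \Mnat-convex function: writing $x^\bullet = d^\bullet + b^\bullet$ and $\bar{x} = \bar{d}+\bar{b}$, if $x^\bullet$ minimizes $C$ over the relaxed region and $x(i) < x^\bullet(i)$ for some $i$, then there is $j$ with $x(j) > x^\bullet(j)$ and $C(x + e_i - e_j) \le C(x)$, and symmetrically with the roles of deficit and surplus reversed. Proving this exchange property is the main obstacle, because the bike budget $x(N) - d(N) \le B$ couples the internal splits across stations, so $C$ is not a plain partial minimization of an \Lnat-convex function; I expect to establish it either by a direct case analysis on the multimodularity inequalities (deciding, for each unit transfer, whether to move an open dock or a dock-with-bike so as to preserve the budget) or by showing this budgeted projection retains the required exchange structure.

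Granting the exchange property, Part~1 is an extremal-solution argument. I would choose, among all DR-optima, one minimizing $\|x - x^\bullet\|_1$, and suppose \eqref{eqn:DR-PQ} fails, say at $i \in P$ with $x(i) < \bar{x}(i)$. Since $i \in P$ gives $\bar{x}(i) < x^\bullet(i)$, this $i$ is a deficit coordinate lying below $\bar{x}$, and the exchange supplies a surplus coordinate $j$ with $x(j) > x^\bullet(j)$ and $C(x + e_i - e_j) \le C(x)$. The transfer $x \mapsto x + e_i - e_j$ keeps $x(N) = D+B$, stays within $[\ell, u]$ because $x(i)+1 \le x^\bullet(i) \le u(i)$ and $x(j)-1 \ge x^\bullet(j) \ge \ell(j)$, and does not increase $\|x - \bar{x}\|_1$: the step at $i$ strictly lowers $|x(i) - \bar{x}(i)|$ since $x(i) < \bar{x}(i)$, so the net change is at most $0$. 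Realizing the optimal split attaining $C(x+e_i-e_j)$ yields a bike-feasible point of cost $\le c(d,b)$, hence another DR-optimum, now strictly closer to $x^\bullet$, contradicting minimality. The symmetric violation $i \in Q$ with $x(i) > \bar{x}(i)$ is handled identically by decreasing $x(i)$, and this establishes \eqref{eqn:DR-PQ}.

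For the \emph{moreover} statement I would show that once $\|x^\bullet - \bar{x}\|_1 > 2\gamma$ no DR-optimum can have slack in the $\ell_1$-constraint. Let $C^*$ be the optimal value of DR and suppose some DR-optimum has $\|x - \bar{x}\|_1 < 2\gamma$; since $\sum_i (x(i) - \bar{x}(i)) = 0$ this distance is even, so in fact $\|x - \bar{x}\|_1 \le 2\gamma - 2$. If $C(x^\bullet) = C^*$, then $x$ is itself a relaxed optimum with $\|x - \bar{x}\|_1 < 2\gamma < \|x^\bullet - \bar{x}\|_1$, contradicting the choice of $(d^\bullet, b^\bullet)$ as the relaxed optimum of minimum $\ell_1$-distance. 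If instead $C(x^\bullet) < C^*$, then $x$ is not a minimizer of $C$ over the box $\{x(N) = D+B,\ \ell \le x \le u\}$, so the \Mnat-convex exchange yields a single sum-preserving transfer strictly decreasing $C$; such a transfer changes $\|x - \bar{x}\|_1$ by at most $2$ and therefore lands at a point with $\|x - \bar{x}\|_1 \le 2\gamma$, giving a DR-feasible solution of cost below $C^*$, which is impossible. Hence every DR-optimum meets the $\ell_1$-constraint with equality. Throughout, the only nonroutine ingredient is the discrete-convex exchange for the budgeted aggregate $C$; the feasibility bookkeeping (the sign pattern relative to $\bar{x}$ and the parity of $\|x-\bar{x}\|_1$) is elementary.
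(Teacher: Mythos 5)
This paper does not actually prove Proposition~\ref{prop:l1-dist-const}; it imports it from \cite[Section 6.2]{Shioura2022}, and your plan is essentially a reconstruction of that source's framework: pass to the aggregate vector $x=d+b$, observe via Murota's multimodularity--L$^\natural$-convexity correspondence that each $c_i(d(i),x(i)-d(i))$ is L$^\natural$-convex in $(d(i),x(i))$, minimize out the split to get a function $C(x)$ that is M-convex on the hyperplane $x(N)=D+B$, and then run an exchange/extremality argument. Granting the M-convexity of $C$, your two applications of it are sound: the sign-pattern claim via a DR-optimum minimizing $\|x-x^\bullet\|_1$ (the violating coordinate at $i\in P$ with $x(i)<\bar x(i)<x^\bullet(i)$ is simultaneously a deficit coordinate relative to $x^\bullet$ and a coordinate where moving toward $\bar x$ cannot increase $\|x-\bar x\|_1$), and the tightness claim via the parity of $\|x-\bar x\|_1$ together with either the minimal-distance choice of $x^\bullet$ or a single improving exchange that moves the $\ell_1$-distance by at most $2$.

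The genuine gap is the one you flag yourself: the M-convex exchange property for the budgeted aggregate
$C(x)=\min\{\sum_i c_i(d(i),x(i)-d(i)) : 0\le d\le x,\ x(N)-d(N)\le B\}$
is asserted, not proved. This is not a routine projection step. Partial minimization of a separable sum of two-variable L$^\natural$-convex functions would give an L$^\natural$-convex (not M-convex) object, and here the bike budget $x(N)-d(N)\le B$ couples the per-station minimizations, so neither standard closure operation applies directly. Establishing that this $C$ nevertheless satisfies the sum-preserving exchange axiom is precisely the main structural theorem of \cite{Shioura2022}, on which everything else in your argument (the local optimality criterion for $C$, the existence of the improving single transfer in the ``moreover'' part, and the realization of the optimal split as a DR-feasible pair) depends. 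Until that lemma is supplied --- either by citing it or by carrying out the case analysis on the multimodularity inequalities that you sketch --- the proof is a correct reduction to an unproven statement rather than a proof. A second, minor point: your parity step in the ``moreover'' part (``$\|x-\bar x\|_1<2\gamma$ and even implies $\le 2\gamma-2$'') needs $\gamma\in\Z$, which should be stated since the paper only calls $\gamma$ a positive constant.
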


 If $(d^\bullet, b^\bullet)$ satisfies the $\ell_1$-distance constraint,
then it is also an optimal solution of DR.
Therefore, we may assume that $(d^\bullet, b^\bullet)$ 
violates the $\ell_1$-distance constraint.
 Proposition \ref{prop:l1-dist-const} shows that
the upper/lower-bound constraint \eqref{eqn:DR-PQ}
for the values $d(i)  + b(i)$ $(i\in N)$ can be added to the problem DR,
which is denoted as DR$'$.
 Note that under the condition \eqref{eqn:DR-PQ},
the $\ell_1$-distance constraint $\| (d+b) - (\bar{d} + \bar{b})\|_1 \le  2\gamma$
is equivalent to the linear inequalities
\[
  d(P)+b(P) \le \xi_P,\qquad
  d(Q)+b(Q) \ge \xi_Q
\]
with
\[
\xi_P = \bar d(P)+ \bar b(P) + \gamma,
\qquad
\xi_Q = \bar d(Q)+ \bar b(Q) - \gamma.
\]

 In summary, the problem DR$'$ is formulated as
\[
 \begin{array}{l|lll}
\mbox{(DR$'$)} & \mbox{Minimize }  & 
\displaystyle c(d, b)\\
& \mbox{subject to } & 
  d(N)+b(N) = D+ B,\\
& & 
 b(N) \le  B,\\
& & 
  d(P)+b(P) \le \xi_P,\   d(Q)+b(Q) \ge \xi_Q,\\
& & \ell' \le d + b \le u',\\ 
& &  d, b \in \Z^n_+,
 \end{array}
\]
where vectors $\ell'$ and $u'$ are given as
\begin{align*}
\ell'(i) & =
\begin{cases}
\max\{\ell(i),  \bar{d}(i)+\bar{b}(i)\} & \mbox{ if }i \in P, \\
\ell(i) & \mbox{ if }i \in Q,
\end{cases}
\\
u'(i)  &=
\begin{cases}
u(i) & \mbox{ if }i \in P, \\
\min\{u(i),  \bar{d}(i)+\bar{b}(i)\} & \mbox{ if }i \in Q.
\end{cases}
\end{align*}
 In the following, we mainly deal with DR$'$ instead of DR.

 To state a proximity result for the problem DR$'$,
let us consider a  ``scaled'' version of DR$'$,
denoted by DR$'$$(\lambda)$,
with parameter $\lambda$ given by a positive integer.
 The scaled problem DR$'$$(\lambda)$ is obtained 
by adding to DR the following constraint:
\[
 d(i) \equiv  \widetilde{d}(i) \bmod \lambda,\ 
b(i) \equiv  \widetilde{b}(i) \bmod \lambda\  (i \in N),
\]
where
$(\widetilde{d}, \widetilde{b})$ is an arbitrarily chosen
feasible solution of DR$'$.
 Note that this constraint can be simplified to the following:
\[
 d(i) \mbox{ and }
b(i) \mbox{ are integer multiples of } \lambda\mbox{ for }  i \in N,
\]
provided that
 there exists a feasible solution $(\widetilde{d}, \widetilde{b})$ of DR$'$
such that each component of $\widetilde{d}$ and $\widetilde{b}$
is integer multiple of $\lambda$.
 Hence, the problem DR$'$$(\lambda)$ with $\lambda = 1$ is  the same
as the original problem DR$'$.
 Note that $(\widetilde{d}, \widetilde{b})$ is a feasible solution of 
DR$'$$(\lambda)$ for all $\lambda$.

 It should be mentioned that
the scaled problem DR$'$$(\lambda)$ has the same structure
as the problem DR$'$;
in particular, the multimodularity of the objective function
is preserved by this scaling operation.
 Therefore, various properties and algorithms for DR$'$
can be applied to DR$'$$(\lambda)$ with some appropriate modification.

 In this paper, we show the following proximity bound for 
optimal solutions of DR$'$ and DR$'$$(\lambda)$.

\begin{theorem}
\label{thm:bdr-proximity}
 Let $\lambda$ be a positive integer,
and $(d,b)$ be an optimal solution of {\rm DR$'$$(\lambda)$}.
 Then, there exists an optimal solution 
$(d^*,b^*)$ of {\rm DR$'$} such that 
\[
 \|(d^*+b^*) - (d+b)\|_1 < 10 \lambda n.
\]
\end{theorem}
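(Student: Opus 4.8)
The plan is to fix the given optimal solution $(d,b)$ of DR$'$($\lambda$) and, among all optimal solutions of DR$'$, single out one $(d^*,b^*)$ that minimizes $\|(d^*+b^*)-(d+b)\|_1$, breaking ties by minimizing the finer distance $\|(d^*,b^*)-(d,b)\|_1$. Writing $s=d+b$ and $s^*=d^*+b^*$, I would measure the discrepancy station by station and argue for a contradiction: if the total discrepancy $\|s^*-s\|_1$ were at least $10n\lambda$, then a local capacity-transfer move could be performed that contradicts either the optimality of $(d,b)$ for DR$'$($\lambda$) or the minimal-distance choice of $(d^*,b^*)$. The tie-breaking is needed so that even moves that leave $s$ unchanged (converting an open dock into a bike-dock within a station) can still be exploited.

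The engine is a simultaneous-exchange lemma extracted from multimodularity. The three defining inequalities of each $c_i$ are precisely the statements that the elementary moves relevant here — transferring one unit of total capacity between two stations (as an open dock or as a bike-dock) and converting an open dock into a bike-dock or back within a station — have convex marginal costs. Summing over $i$, this should yield, for a suitable elementary direction $v$ pointing from $(d,b)$ toward $(d^*,b^*)$,
\[
 c\big((d,b)+\lambda v\big)+c\big((d^*,b^*)-\lambda v\big)\ \le\ c(d,b)+c(d^*,b^*),
\]
where $+\lambda v$ keeps $(d,b)$ in the correct residue classes modulo $\lambda$ and pushes it toward $(d^*,b^*)$, while $-\lambda v$ pushes $(d^*,b^*)$ toward $(d,b)$. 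Optimality of $(d,b)$ in DR$'$($\lambda$) forces $c((d,b)+\lambda v)\ge c(d,b)$, whence $c((d^*,b^*)-\lambda v)\le c(d^*,b^*)$; thus $(d^*,b^*)-\lambda v$ is again optimal for DR$'$ but strictly closer to $(d,b)$, contradicting the choice of $(d^*,b^*)$ — provided the two moved points are feasible.

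Feasibility is the crux, and I expect it to be the main obstacle. A capacity-preserving transfer respects the equality $d(N)+b(N)=D+B$ automatically, but the move must also respect $b(N)\le B$, the partition constraint $s(P)\le\xi_P$ (equivalently $s(Q)\ge\xi_Q$), the box $\ell'\le s\le u'$, and the nonnegativity $d,b\ge 0$. I would therefore show that whenever \emph{no} distance-reducing feasible transfer exists, every candidate direction is blocked by one of these constraints being nearly tight, and then bound the discrepancy that such blocking can shield. Concretely, the plan is to build an auxiliary exchange graph on the stations, split according to the signs of $s^*(i)-s(i)$ and according to $P/Q$ membership, route the discrepancy along augmenting paths, and charge the residual discrepancy — which arises only from the $\lambda$-granularity of the scaled solution and from the at-most-boundedly-many tight global constraints ($b(N)\le B$ and $s(P)\le\xi_P$) — to the individual stations. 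Each station should then carry a discrepancy of only $O(\lambda)$, and a careful accounting of the three elementary direction types, transfers of open docks, transfers of bike-docks, and within-station conversions, together with the two global inequalities, is what I expect to produce the explicit constant and the final bound $\|(d^*+b^*)-(d+b)\|_1<10n\lambda$.
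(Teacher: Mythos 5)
Your overall skeleton matches the paper's: fix a distance-minimizing optimal solution $(d^*,b^*)$ of DR$'$, use the multimodular exchange inequalities to show that any feasible $\lambda$-step move of $(d,b)$ toward $(d^*,b^*)$ paired with a unit move of $(d^*,b^*)$ toward $(d,b)$ yields a contradiction with either scaled optimality or minimality, and then argue that the surviving (blocked) configurations force the bound. The paper does exactly this, with the five elementary exchange directions of Proposition~\ref{prop:c-ineq} and the strict-decrease statements of Lemma~\ref{lem:mono-dec}. Up to that point your proposal is sound (your lexicographic tie-break is compatible with all five move types, since each of them strictly decreases both $\|s^*-s\|_1$ and the finer distance).

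The genuine gap is in the final accounting, which is where all the work and the constant $10$ live. Your plan asserts that after routing discrepancy along augmenting paths, \emph{each station carries only $O(\lambda)$ discrepancy}. That is not what the blocking arguments deliver, and it is not how the bound is obtained. The exchange arguments only show that certain \emph{pairs} of stations with oppositely signed large discrepancies cannot coexist (e.g., Lemma~\ref{lem:setI-2}: within $P$, you cannot simultaneously have a station with $x-x^*\ge\lambda,\ d-d^*\ge\lambda$ and one with $x-x^*\le-\lambda,\ d-d^*\le-\lambda$). This bounds the discrepancy on \emph{one} sign per station by $O(\lambda)$; the opposite sign at an individual station can still be as large as $O(\lambda n)$ and is controlled only in aggregate, by playing the one-sided per-station bounds against the global balance relations $x(N)=x^*(N)$, $x(P)\le\xi_P=x^*(P)$, $x(Q)\ge\xi_Q=x^*(Q)$, and $b(N)\le B$ (this is why the paper can only conclude $\|x-x^*\|_\infty<10\lambda n$, not $O(\lambda)$, in Corollary~\ref{coro:bdr-proximity}). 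Moreover, the splitting of the $\ell_1$-distance constraint into the $P$-side and $Q$-side inequalities creates cross configurations with no analogue in the unconstrained problem: the paper needs a $4\times 4$ case analysis over which of the sets $I_t^P$, $I_t^Q$ are empty, must separately rule out the combinations P3--Q4 and P4--Q3 via a four-station exchange (Lemma~\ref{lem:3p6p4q5q-empty}), and in the mixed cases the per-part bounds acquire cross terms such as $4\lambda|Q|+2\lambda|P|$ (Lemma~\ref{lem:p1-q3-bound}), which is exactly where the constant grows from $4$ to $10$. None of this is recoverable from the proposal as written, so the quantitative heart of the theorem is missing.
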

\noindent
 Proof of this theorem is given in Section \ref{sec:drp} (and in Appendix).
 By Theorem~\ref{thm:bdr-proximity} and
the fact that the structure of the problem DR$'$$(\lambda)$
is the same as the original problem DR$'$, 
the following
(seemingly) generalized proximity result for DR$'$ can be obtained.

\begin{corollary}
 Let $\lambda$ and  $\nu$ be positive integers,
and $(d,b)$ be an optimal solution of {\rm DR$'$$(\lambda \nu)$}.
Then, there exists some $\lambda$-optimal solution 
$(d^*,b^*)$ of {\rm DR$'$$(\lambda)$} such that 
\[
 \|(d^*+b^*) - (d+b)\|_1 < 10 \lambda \nu n.
\]
\end{corollary}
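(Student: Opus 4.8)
The plan is to derive the corollary from Theorem~\ref{thm:bdr-proximity} by a rescaling (change-of-variables) argument, exploiting the fact recorded just before the corollary that DR$'(\lambda)$ has the same structure as DR$'$ and that multimodularity is preserved under the scaling operation. In other words, I would make the informal ``same structure'' remark precise by exhibiting an explicit affine bijection that turns the pair $(\mathrm{DR}'(\lambda),\,\mathrm{DR}'(\lambda\nu))$ into a pair of the form $(\mathrm{DR}',\,\mathrm{DR}'(\nu))$ to which Theorem~\ref{thm:bdr-proximity} applies verbatim, and then track the effect of the rescaling on the $\ell_1$-distance.

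Concretely, I would introduce the change of variables $\hat d(i) = (d(i)-\widetilde d(i))/\lambda$ and $\hat b(i) = (b(i)-\widetilde b(i))/\lambda$ for $i\in N$, where $(\widetilde d,\widetilde b)$ is the reference feasible solution used to define the scaling. For any $(d,b)$ feasible for DR$'(\lambda)$ the residue conditions $d\equiv\widetilde d,\ b\equiv\widetilde b \pmod\lambda$ make $\hat d,\hat b$ integral, so this substitution is a bijection between the feasible region of DR$'(\lambda)$ and that of a new problem $\widehat{\mathrm{DR}}$ in the variables $(\hat d,\hat b)$, and it preserves objective values since $c(d,b)=\sum_i c_i(\widetilde d(i)+\lambda\hat d(i),\,\widetilde b(i)+\lambda\hat b(i))$. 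I would then check that $\widehat{\mathrm{DR}}$ is again a DR$'$-type instance: the objective is multimodular by the cited scaling-invariance; each linear constraint of DR$'$ (the equality $d(N)+b(N)=D+B$, the bounds on $b(N)$, on $d(P)+b(P)$ and $d(Q)+b(Q)$, and the box $\ell'\le d+b\le u'$) becomes a linear constraint of the same form after dividing by $\lambda$ and absorbing the shift $(\widetilde d,\widetilde b)$; and the nonnegativity $d,b\ge\0$ turns into individual integer lower bounds on $(\hat d,\hat b)$, which the DR$'$ structure accommodates. In these coordinates the reference solution $(\widetilde d,\widetilde b)$ sits at the origin $\hat d=\hat b=\0$.

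The key observation is that, under the very same substitution, the more finely scaled problem DR$'(\lambda\nu)$ is exactly the $\nu$-scaled version $\widehat{\mathrm{DR}}(\nu)$ of $\widehat{\mathrm{DR}}$: the congruence $d\equiv\widetilde d \pmod{\lambda\nu}$ is equivalent to $\hat d\equiv\0\pmod\nu$, and likewise for $b$, so DR$'(\lambda\nu)$ corresponds to restricting $(\hat d,\hat b)$ to integer multiples of $\nu$ about the origin, which is precisely the scaling operation of the paper applied to $\widehat{\mathrm{DR}}$ with reference solution $\0$. Consequently $\lambda$-optimal solutions of DR$'(\lambda)$ correspond to optimal solutions of $\widehat{\mathrm{DR}}$, and optimal solutions of DR$'(\lambda\nu)$ correspond to optimal solutions of $\widehat{\mathrm{DR}}(\nu)$.

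Finally I would apply Theorem~\ref{thm:bdr-proximity} to the pair $(\widehat{\mathrm{DR}},\,\widehat{\mathrm{DR}}(\nu))$ with scaling parameter $\nu$. Writing $(\hat d,\hat b)$ for the image of the given optimal solution $(d,b)$ of DR$'(\lambda\nu)$, the theorem produces an optimal solution $(\hat d^*,\hat b^*)$ of $\widehat{\mathrm{DR}}$ with $\|(\hat d^*+\hat b^*)-(\hat d+\hat b)\|_1 < 10\nu n$. Pulling this back through the substitution, $(d^*,b^*)=(\widetilde d+\lambda\hat d^*,\ \widetilde b+\lambda\hat b^*)$ is a $\lambda$-optimal solution of DR$'(\lambda)$, and since the shift $(\widetilde d,\widetilde b)$ cancels in the difference one has $(d^*+b^*)-(d+b)=\lambda[(\hat d^*+\hat b^*)-(\hat d+\hat b)]$, whence $\|(d^*+b^*)-(d+b)\|_1 = \lambda\,\|(\hat d^*+\hat b^*)-(\hat d+\hat b)\|_1 < 10\lambda\nu n$, as required. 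I expect the only genuinely nontrivial step to be the verification that $\widehat{\mathrm{DR}}$ is a legitimate DR$'$-type instance---in particular that the individual lower bounds coming from $d,b\ge\0$ and the (possibly negative or zero) right-hand sides do not leave the class for which Theorem~\ref{thm:bdr-proximity} was proved; this is exactly the ``same structure'' fact asserted in Section~2, so no new work on the proximity estimate itself is needed, and the factor-$\lambda$ conversion of the $\ell_1$-norm is immediate.
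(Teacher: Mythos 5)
Your proposal is correct and follows essentially the same route as the paper, which derives the corollary from Theorem~\ref{thm:bdr-proximity} together with the observation that DR$'(\lambda)$ has the same structure as DR$'$ and that DR$'(\lambda\nu)$ is its $\nu$-scaled version; the paper simply asserts this in one sentence, whereas you make the change of variables and the factor-$\lambda$ conversion of the $\ell_1$-norm explicit.
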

As an immediate corollary to Theorem \ref{thm:bdr-proximity},
we can also obtain
the following proximity bound with respect to $\ell_\infty$-distance.

\begin{corollary}
\label{coro:bdr-proximity}
 Let $\lambda$ be a positive integer,
and $(d,b)$ be an optimal solution of {\rm DR$'$$(\lambda)$}.
 Then, there exists an optimal solution 
$(d^*,b^*)$ of {\rm DR$'$} such that 
\[
 \|(d^*+b^*) - (d+b)\|_\infty < 10 \lambda n.
\]
\end{corollary}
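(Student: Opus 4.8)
The plan is to deduce this corollary directly from Theorem~\ref{thm:bdr-proximity} using only the elementary relationship between the $\ell_1$- and $\ell_\infty$-norms on $\R^n$. First I would invoke Theorem~\ref{thm:bdr-proximity} to obtain an optimal solution $(d^*,b^*)$ of DR$'$ satisfying $\|(d^*+b^*)-(d+b)\|_1 < 10\lambda n$. I would then take exactly this same solution $(d^*,b^*)$ as the witness for the $\ell_\infty$ bound, so that no new construction, optimization, or case analysis is needed; the existence claim is inherited verbatim from the theorem.

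The key (and essentially only) step is the standard norm inequality $\|v\|_\infty \le \|v\|_1$, valid for every $v \in \R^n$ because the maximum absolute component of $v$ is bounded above by the sum of all absolute components. Applying this with $v = (d^*+b^*)-(d+b)$ gives
\[
\|(d^*+b^*)-(d+b)\|_\infty \le \|(d^*+b^*)-(d+b)\|_1 < 10\lambda n,
\]
which is precisely the asserted bound.

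Since the same optimal solution serves both norms and the comparison between the two norms is completely routine, I do not anticipate any genuine obstacle here: the corollary is truly immediate, and all of the mathematical content lies in Theorem~\ref{thm:bdr-proximity} itself. The only thing worth checking is that the strict inequality is preserved, which it is, because $\|v\|_\infty \le \|v\|_1 < 10\lambda n$ chains a non-strict with a strict bound to yield a strict conclusion.
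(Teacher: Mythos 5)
Your proposal is correct and matches the paper's intent exactly: the paper presents Corollary~\ref{coro:bdr-proximity} as an immediate consequence of Theorem~\ref{thm:bdr-proximity}, and the derivation is precisely the one you give, namely taking the same witness $(d^*,b^*)$ and applying the elementary inequality $\|v\|_\infty \le \|v\|_1$, which preserves the strict bound $< 10\lambda n$.
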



\section{Proof of Proximity Theorem}
\label{sec:drp}
\setcounter{equation}{0}

\subsection{Proof of Theorem \ref{thm:bdr-proximity}}

 To prove Theorem \ref{thm:bdr-proximity},
let $(d,b)$ be an optimal solution of DR$'(\lambda)$, and
show that there exists some optimal solution  $(d^*,b^*)$ of
DR$'$ such that 
\begin{equation}
\label{eqn:prox-bound}
  \|(d^*+b^*)  - (d+b)\|_1 < 10 \lambda n.
\end{equation}
 Let $(d^*,b^*)$ be an optimal solution of {\rm DR$'$}
that minimizes the value $\|d^* - d\|_1 + \|b^* - b\|_1$.
 Also, let $x =d+b$ and $x^* = d^*+b^*$.
 It suffices to show that the inequality $ \|x  - x^*\|_1 < 10 \lambda n$ holds.
 Note that by Proposition \ref{prop:l1-dist-const}
and feasibility of $(d,b)$, it holds that
\begin{align}
x(P) \le \xi_P = x^*(P),
\qquad
x(Q) \ge \xi_Q = x^*(Q).
\label{eqn:x-x*-ineq}
\end{align}

 In the proof,
we use some useful inequalities
for the objective function $c(d,b)\equiv \sum_{i=1}^n c_i(d(i), b(i))$,
which follow from multimodularity of functions $c_i$.

\begin{proposition}[{cf.~\cite[Proposition 3.1]{Shioura2022}}]
\label{prop:c-ineq}
 Let $(d,b), (d', b') \in \Z^n_+ \times \Z^n_+$,
and put $x = d+b$, $x' = d'+b'$.
 Also, let $i,j,h,k,s \in N$ be elements such that
\begin{align*}
&i \in \suppp(d-d') \cap \suppp(x-x'),
&&
j \in \suppm(b-b') \cap \suppm(x-x'),
\\
& h \in \suppm(d-d') \cap \suppm(x-x'),
&& k \in \suppp(b-b') \cap \suppp(x-x'),\\
& s \in \suppm(d-d') \cap \suppp(b-b').
\end{align*}
Then, the following inequalities hold:
\begin{align}
 c( d,  b) + c( d',  b') 
& \ge  c(d- \chi_i + \chi_h, b) +  c(d'+ \chi_i - \chi_h,  b'),
\label{eqn:c-ineq-1}
\\
 c( d,  b) + c( d',  b') 
& \ge  c(d, b+ \chi_j - \chi_k) +  c(d',  b'- \chi_j + \chi_k),
\label{eqn:c-ineq-2}
\\
\label{eqn:c-ineq-4}
 c( d,  b) + c( d',  b') 
& \ge  c(d- \chi_i, b + \chi_j) +  c(d'+ \chi_i,  b' - \chi_j),
\\
 c( d,  b) + c( d',  b')\notag \\
& \hspace*{-20mm}
\ge  c(d- \chi_i+ \chi_s, b + \chi_j- \chi_s)
 +  c(d'+ \chi_i - \chi_s,  b' - \chi_j+\chi_s),
\label{eqn:c-ineq-6}
\\
 c( d,  b) + c( d',  b')\notag \\
& \hspace*{-20mm}
\ge  c(d- \chi_i+ \chi_h, b + \chi_j- \chi_k)
 +  c(d'+ \chi_i - \chi_h,  b' - \chi_j+\chi_k).
\label{eqn:c-ineq-5}
\end{align}
\end{proposition}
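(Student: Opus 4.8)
The plan is to use the separability $c(d,b)=\sum_m c_m(d(m),b(m))$ to split each of the five inequalities into a sum of independent single-station terms, and then to derive each such term from the three defining multimodularity inequalities of $c_m$ (displayed above; I refer to them as (M1), (M2), (M3) in the order listed). First I would record the combinatorial fact that the support hypotheses force the relevant indices apart: any two of $i,j,h,k,s$ carrying opposite membership in $\suppp(x-x')$ versus $\suppm(x-x')$, or in the positive versus negative support of $d-d'$ or of $b-b'$, must be distinct. This yields $i\neq j$, $i\neq h$, $j\neq k$, $h\neq k$ and $s\notin\{i,j\}$ automatically; the only coincidences that can occur are $i=k$ and $h=j$ in \eqref{eqn:c-ineq-5}. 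When the touched indices are distinct, the quantity (left-hand side) $-$ (right-hand side) of each inequality decomposes as a sum over the touched stations of independent terms, and it suffices to show each term is nonnegative.

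Each single-station term, after cancellation, has the shape ``(marginal cost of a fixed unit move at a point $P$) $-$ (marginal cost of the same move at a point $P'$)'', which I would prove nonnegative by a monotonicity argument. Writing $\Delta_d f(p,q)=f(p+1,q)-f(p,q)$ and $\Delta_b f(p,q)=f(p,q+1)-f(p,q)$ for a multimodular $f$, inequality (M1) says exactly that $\Delta_b f$ is non-decreasing in direction $(1,0)$ and, rearranged, that $\Delta_d f$ is non-decreasing in direction $(0,1)$; (M3) says $\Delta_d f$ is non-decreasing in direction $(1,-1)$; and (M2) says $\Delta_b f$ is non-decreasing in direction $(-1,1)$. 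Adding good directions then also gives monotonicity of the diagonal marginals $f(p+1,q-1)-f(p,q)$ and $f(p+1,q+1)-f(p,q)$ along their own pairs of good directions, which is what the diagonal moves appearing later require.

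The crux is a uniform monotone-path argument. To establish (marginal at $P$) $\ge$ (marginal at $P'$), I would write the displacement $P-P'$ as a nonnegative integer combination of the two good directions attached to that marginal, and telescope along a lattice path from $P'$ to $P$ taken one good step at a time, ordering the steps so the path stays in $\Z^2_+$ and each invoked instance of (M1)--(M3) lies in its domain. This is precisely where the hypotheses enter: for every touched station the two coefficients of the decomposition equal a $d$- or $b$-support gap and an $x$-support gap (or sums thereof), each nonnegative by the stated membership of $i,j,h,k,s$ in the corresponding supports. For instance, the $i$-term common to \eqref{eqn:c-ineq-1}, \eqref{eqn:c-ineq-4}, \eqref{eqn:c-ineq-6} and \eqref{eqn:c-ineq-5} asks for $\Delta_d c_i(d(i)-1,b(i))\ge \Delta_d c_i(d'(i),b'(i))$, whose decomposition uses $d(i)-d'(i)-1\ge 0$ copies of $(1,-1)$ (from $i\in\suppp(d-d')$) and $x(i)-x'(i)-1\ge 0$ copies of $(0,1)$ (from $i\in\suppp(x-x')$); the $j$-, $h$- and $k$-terms are symmetric, while the $s$-term of \eqref{eqn:c-ineq-6} uses only the $d$- and $b$-support gaps of $s$, matching that $s$ carries no $x$-condition.

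No single step is deep: everything reduces to (M1)--(M3) together with the nonnegativity of two integer coefficients. I expect the main work, and the likeliest source of error, to be the bookkeeping — carried out five times — of identifying which stations are touched and by which move, and of checking that the two support hypotheses attached to each index are exactly the two needed to make both coefficients nonnegative. The genuinely separate cases are the diagonal moves: the $s$-coordinate of \eqref{eqn:c-ineq-6}, a direction-$(1,-1)$ exchange at a single station handled by the marginal $f(p+1,q-1)-f(p,q)$; and the coincidences $i=k$ and $h=j$ in \eqref{eqn:c-ineq-5}, where a single $c_m$ undergoes a direction-$(1,1)$ exchange handled by $f(p+1,q+1)-f(p,q)$. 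Verifying that every intermediate lattice point stays in $\Z^2_+$ and inside the domain of the invoked instance of (M1)--(M3), and completing the coincidence analysis in \eqref{eqn:c-ineq-5}, should be the most delicate part of the write-up.
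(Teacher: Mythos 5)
The paper does not actually prove this proposition: it is imported wholesale from \cite[Proposition~3.1]{Shioura2022} (note the ``cf.''), so there is no in-paper proof to compare against. Judged on its own, your plan is correct and complete in all essentials, and it is the natural direct argument: separability reduces each inequality to per-station marginal comparisons, and each comparison follows by telescoping one of the marginals $\Delta_d$, $\Delta_b$, $f(p+1,q-1)-f(p,q)$, $f(p+1,q+1)-f(p,q)$ along a monotone lattice path whose two step-counts are exactly the $d$-/$b$-gap and the $x$-gap supplied by the support hypotheses. I checked the representative computations: for the $i$-term the coefficients $d(i)-d'(i)-1$ and $x(i)-x'(i)-1$ are as you state, and performing all $(0,1)$ steps before the $(1,-1)$ steps keeps every invoked instance of (M1)/(M3) inside $\Z_+^2$ and inside its stated domain; the $s$-term of \eqref{eqn:c-ineq-6} needs no ``adding'' of directions, since monotonicity of $f(p+1,q-1)-f(p,q)$ in $(1,0)$ and in $(0,-1)$ is literally (M3) and (M2) respectively, whereas the $(1,1)$-marginal needed for the coincidences in \eqref{eqn:c-ineq-5} genuinely does require combining two good directions (e.g.\ $(1,1)=(1,-1)+2(0,1)$ for $\Delta_d$). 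Your identification of the only possible coincidences ($i=k$ and $h=j$ in \eqref{eqn:c-ineq-5}; all indices appearing together in the other four inequalities are forced distinct by opposite support memberships) is also correct. The only caveat is that what you have written is a plan rather than a write-up --- the five-fold bookkeeping you flag as the likely error source is real but, as far as I can verify, contains no actual obstruction.
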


By Proposition \ref{prop:c-ineq} and 
the choice of $(d,b), (d^*, b^*)$,
we obtain the following property.
Proof is given in Section \ref{proof:lem:mono-dec}.

\begin{lemma}
\label{lem:mono-dec}
\ \\
{\rm (i)} 
Let $i \in \suppp(d-d^*) \cap \suppp(x-x^*)$,
$h \in \suppm(d-d^*) \cap \suppm(x-x^*)$,
and suppose that either $i,h \in P$ or $i,h \in Q$ holds.
Then, we have
\begin{align}
&
  c(d- \lambda'\chi_i+ \lambda' \chi_h, b) > 
c(d- (\lambda'+1)\chi_i+ (\lambda'+1) \chi_h, b)
\notag\\
& \hspace*{30mm} 
(0 \le \lambda' < \min\{d(i)-d^*(i), d^*(h)-d(h)\}).
 \label{eqn:lem:mono-dec-1}
\end{align}
{\rm (ii)} 
 Let $j \in \suppm(b-b^*) \cap \suppm(x-x^*)$,
$k \in \suppp(b-b^*) \cap \suppp(x-x^*)$,
and suppose that either $j,k \in P$ or $j,k \in Q$ holds.
Then, we have
\begin{align}
& c(d, b+ \lambda'\chi_j - \lambda' \chi_k)>
 c(d, b+ (\lambda'+1) \chi_j - (\lambda'+1) \chi_k)
\notag\\
& \hspace*{30mm} 
(0 \le \lambda' < \min\{b^*(j)-b(j), b(k)-b^*(k)\}).
 \label{eqn:lem:mono-dec-2}
\end{align}
{\rm (iii)} 
 Let $i \in \suppm(d-d^*) \cap \suppm(x-x^*)$,
$j \in \suppp(b-b^*) \cap \suppp(x-x^*)$,
and suppose that either $i,j \in P$ or $i,j \in Q$ holds.
Then, we have
\begin{align}
&
c(d + \lambda' \chi_i, b- \lambda'\chi_j)>
c(d + (\lambda'+1) \chi_i, b- (\lambda'+1) \chi_j)
\notag\\
& \hspace*{15mm} 
(0 \le \lambda' < \min\{ d^*(i)-d(i), b(j)-b^*(j), b(N)-b^*(N)\}).
 \label{eqn:lem:mono-dec-3}
\end{align}
{\rm (iv)} 
 Let $i \in \suppm(d-d^*) \cap \suppm(x-x^*)$,
$j \in \suppp(b-b^*) \cap \suppp(x-x^*)$,
$s \in \suppp(d-d^*) \cap \suppm(b-b^*)$,
and suppose that either  $i,j \in P$ or $i,j \in Q$ holds.
Then, we have
\begin{align}
&
 c(d- \lambda'\chi_s+ \lambda' \chi_i, b+ \lambda' \chi_s - \lambda' \chi_j)
\notag\\
&>
c(d- (\lambda'+1)\chi_s+ (\lambda'+1) \chi_i, 
b+ (\lambda'+1) \chi_s - (\lambda'+1) \chi_j)
\notag\\
& \hspace*{10mm} 
(0 \le \lambda' < \min\{ d^*(i)-d(i), b(j)-b^*(j), 
d(s)-d^*(s), b^*(s)-b(s)\}).
\label{eqn:lem:mono-dec-4}
\end{align}
{\rm (v)} 
 Let 
\begin{align*}
& i \in \suppp(d- d^*) \cap \suppp(x -x^*),\
 j \in \suppm(b- b^*) \cap \suppm(x -x^*),\\
& h \in \suppm(d- d^*) \cap \suppm(x -x^*),\
 k \in \suppp(b- b^*) \cap \suppp(x -x^*),
\end{align*}
and suppose that
either of {\rm (a)} $i,j \in P,\  h,k \in Q$ and {\rm (b)}
$i,j \in Q,\  h,k \in P$ holds.
 Then, we have
\begin{align}
&
c(d - \lambda'\chi_i + \lambda' \chi_h, b + \lambda'\chi_j - \lambda' \chi_k)
\notag\\
&>
c(d - (\lambda'+1) \chi_i + (\lambda'+1) \chi_h, b 
+ (\lambda'+1)\chi_j - (\lambda'+1) \chi_k)
\notag\\
& \hspace*{10mm} 
(0 \le \lambda' < 
\min\{ d(i)-d^*(i), b^*(j)-b(j), d^*(h)-d(h), b(k)-b^*(k) \}).
\label{eqn:lem:mono-dec-5}
\end{align}
\end{lemma}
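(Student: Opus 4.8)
The plan is to prove all five parts by a single uniform mechanism: each of the five inequalities of Proposition~\ref{prop:c-ineq} supplies a \emph{weak} exchange inequality, and the \emph{strict} decrease claimed in each part is extracted from the minimality of the chosen solution $(d^*,b^*)$. Concretely, for part~(i) I would apply \eqref{eqn:c-ineq-1} to the running point $(\hat d,b)$ with $\hat d=d-\lambda'\chi_i+\lambda'\chi_h$ (on the DR$'(\lambda)$ side, as first argument) and the fixed $(d^*,b^*)$ (as second argument); the inequality then reads $c(\hat d,b)+c(d^*,b^*)\ge c(\hat d-\chi_i+\chi_h,b)+c(d^*+\chi_i-\chi_h,b^*)$, where $\hat d-\chi_i+\chi_h$ is exactly the configuration at step $\lambda'+1$. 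Parts~(ii)--(v) are handled identically, each time pairing the running point with $(d^*,b^*)$ and choosing the inequality $\bigl(\eqref{eqn:c-ineq-2},\eqref{eqn:c-ineq-4},\eqref{eqn:c-ineq-6},\eqref{eqn:c-ineq-5}\bigr)$ matching the prescribed exchange; for (iii)--(v) one takes $(d^*,b^*)$ as the first argument so the required signs align. In every case the companion term is a \emph{single-unit} perturbation $(d^\dagger,b^\dagger)$ of $(d^*,b^*)$ obtained by moving it one unit \emph{towards} $(d,b)$ along the same exchange.

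The next step is to upgrade each weak inequality to a strict one. For this I would verify that $(d^\dagger,b^\dagger)$ is feasible for DR$'$ and is strictly closer to $(d,b)$, i.e.\ $\|d^\dagger-d\|_1+\|b^\dagger-b\|_1<\|d^*-d\|_1+\|b^*-b\|_1$. Since $(d^*,b^*)$ was chosen to minimise this distance among the optimal solutions of DR$'$, the point $(d^\dagger,b^\dagger)$ cannot be optimal, so $c(d^\dagger,b^\dagger)>c(d^*,b^*)$. Substituting this strict inequality into the exchange inequality cancels $c(d^*,b^*)$ from both sides and leaves exactly $c(\text{step }\lambda')>c(\text{step }\lambda'+1)$, which is the asserted inequality. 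The ``strictly closer'' claim is immediate from the given support memberships, since each moved coordinate of $d^*$ or $b^*$ is advanced one unit in the direction of its $(d,b)$-counterpart, which it has not yet reached; so the real content lies in feasibility.

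Checking feasibility of $(d^\dagger,b^\dagger)$ is where the hypotheses are consumed. The total-dock equality $d(N)+b(N)=D+B$ always survives because every perturbation is dock-neutral. The constraints $d(P)+b(P)\le\xi_P$ and $d(Q)+b(Q)\ge\xi_Q$ are where the side conditions enter: requiring $i,h\in P$ or $i,h\in Q$ in~(i), and analogously through the paired pattern (a)/(b) of~(v), forces the net change of $x^*$ on $P$ and on $Q$ to be zero, so by \eqref{eqn:x-x*-ineq} these inequalities are preserved. The bound constraints $\ell'\le d^\dagger+b^\dagger\le u'$ follow from membership in $\suppp(x-x^*)$ or $\suppm(x-x^*)$ together with feasibility of $(d,b)$ (for instance $x^*(i)+1\le x(i)\le u'(i)$). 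Finally, the bike bound $b(N)\le B$ is the reason the extra term $b(N)-b^*(N)$ appears in the range of~(iii): there the perturbation raises $b^*$ by one unit, so one needs $b^*(N)<B$, which holds precisely when $b(N)-b^*(N)>0$; in (iv) and (v) the perturbation is bike-neutral, explaining why no such term is required.

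The step I expect to be the main obstacle is confirming that the support memberships \emph{defining} the exchange inequality persist at the running point for \emph{every} $\lambda'$ in the stated range, not only at $\lambda'=0$. The memberships in the $d$- and $b$-differences are guarded directly by the terms of the $\min\{\cdots\}$ (e.g.\ $\lambda'<d(i)-d^*(i)$ keeps $i\in\suppp(\hat d-d^*)$). The membership in the $x$-difference, however, compares $\lambda'$ against a coordinate of $x-x^*$ rather than of $d-d^*$, so one must show that along each prescribed exchange the relevant coordinate of $x-x^*$ dominates the moved coordinate of $d-d^*$ (or $b-b^*$); equivalently, one must rule out a conflicting coordinatewise sign pattern between $x-x^*$ and the components being exchanged. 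I would establish this compatibility as a preliminary consequence of the minimality of $(d^*,b^*)$, excluding the bad pattern by a one-coordinate conversion argument based on \eqref{eqn:c-ineq-4}. Once this is in hand, the uniform scheme above closes all five cases.
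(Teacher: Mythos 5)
Your core mechanism is exactly the paper's: for each part, pair the running point with $(d^*,b^*)$, invoke the matching weak exchange inequality of Proposition~\ref{prop:c-ineq}, check that the one-unit perturbation of $(d^*,b^*)$ towards $(d,b)$ is feasible for DR$'$ and strictly closer in $\|d^*-d\|_1+\|b^*-b\|_1$, and use the minimality of $(d^*,b^*)$ among optimal solutions to upgrade $\ge$ to $>$. Your reading of the side conditions is also the paper's: the $P$/$Q$ hypotheses keep $x^*(P)$ and $x^*(Q)$ unchanged so that the constraints $d(P)+b(P)\le\xi_P$, $d(Q)+b(Q)\ge\xi_Q$ survive, and the term $b(N)-b^*(N)$ in (iii) is there because the companion point $(d^*-\chi_i,\,b^*+\chi_j)$ raises the bike total, so one needs $b^*(N)+1\le b(N)\le B$. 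Up to which of the two vectors is written as the first argument, this is the paper's proof of part (v), applied uniformly to all five parts.

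Where you depart is the final paragraph, and there the proposed repair does not go through. You are right that the guards in the $\min\{\cdots\}$ control only the $d$- and $b$-supports, whereas applying, say, \eqref{eqn:c-ineq-1} or \eqref{eqn:c-ineq-5} at the running point also needs $i\in\suppp(x'-x^*)$, i.e.\ $\lambda'<x(i)-x^*(i)$, which is \emph{not} implied by $\lambda'<d(i)-d^*(i)$ when $b(i)<b^*(i)$. But this ``bad pattern'' $i\in\suppp(d-d^*)\cap\suppm(b-b^*)$ cannot be excluded by the minimality of $(d^*,b^*)$: the natural one-coordinate exchange yields $c(d^*,b^*)+c(d,b)\ge c(d^*+\chi_i,b^*-\chi_i)+c(d-\chi_i,b+\chi_i)$, and minimality then gives $c(d-\chi_i,b+\chi_i)<c(d,b)$ --- which contradicts nothing, because $(d-\chi_i,b+\chi_i)$ is a unit perturbation and hence not feasible for DR$'(\lambda)$ when $\lambda>1$. (Also note that \eqref{eqn:c-ineq-4}, which you cite, requires two distinct indices $i\ne j$, so it cannot serve as a one-coordinate conversion.) The paper sidesteps the issue silently, asserting that the support memberships hold ``by the choice of $\lambda'$''; what actually makes every invocation sound is that the indices are always drawn from the sets $I_1,\dots,I_6$ of \eqref{eqn:prox-cond-1}--\eqref{eqn:prox-cond-6}, whose definitions bound the relevant coordinate of $x-x^*$ by $\lambda$ as well, while $\lambda'$ never exceeds $\lambda-1$. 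So the clean fix is to add the corresponding $|x(\cdot)-x^*(\cdot)|$ terms to the $\min\{\cdots\}$ in the statement (harmless for all applications), not to prove an exclusion lemma; as written, that step of your plan is a gap.
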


\begin{figure}
\begin{center}
 \includegraphics[width=0.6\textwidth]{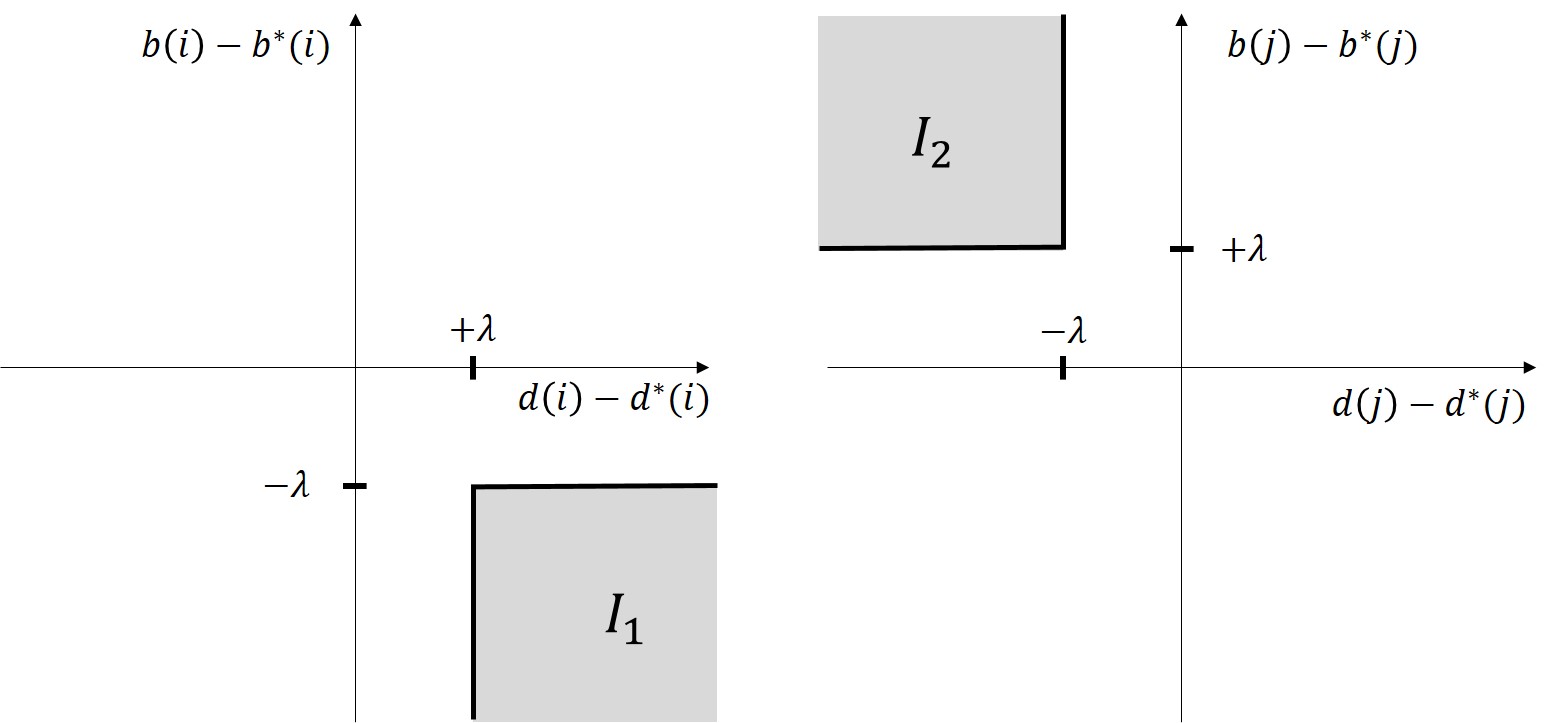}
\caption{Regions corresponding to sets $I_1$ and $I_2$\label{fig:I12}} 
\end{center}
\end{figure}
\begin{figure}
\begin{center}
{\includegraphics[width=0.6\textwidth]{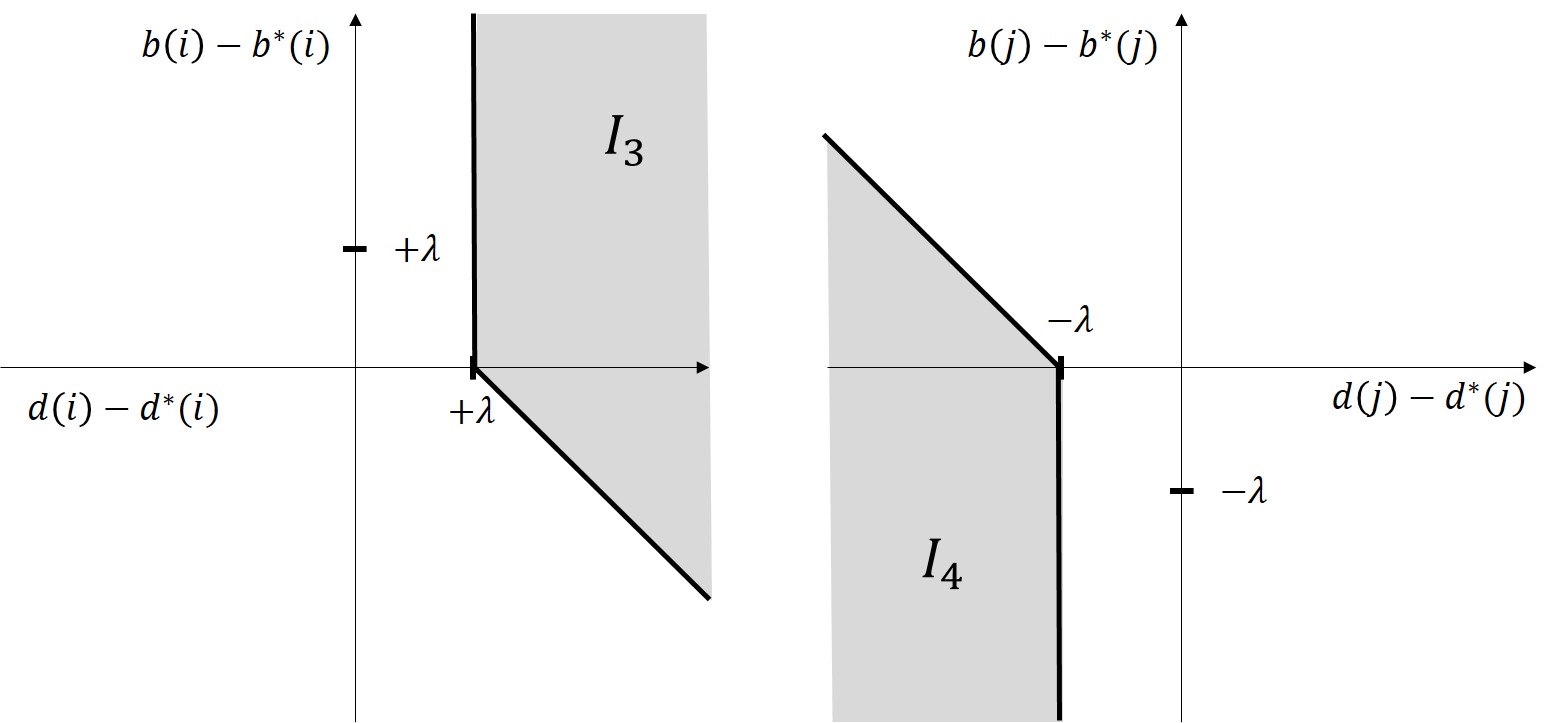}}
\caption{Regions corresponding to sets $I_3$ and $I_4$\label{fig:I34}} 
\end{center}
\end{figure}
\begin{figure}
\begin{center}
{\includegraphics[width=0.6\textwidth]{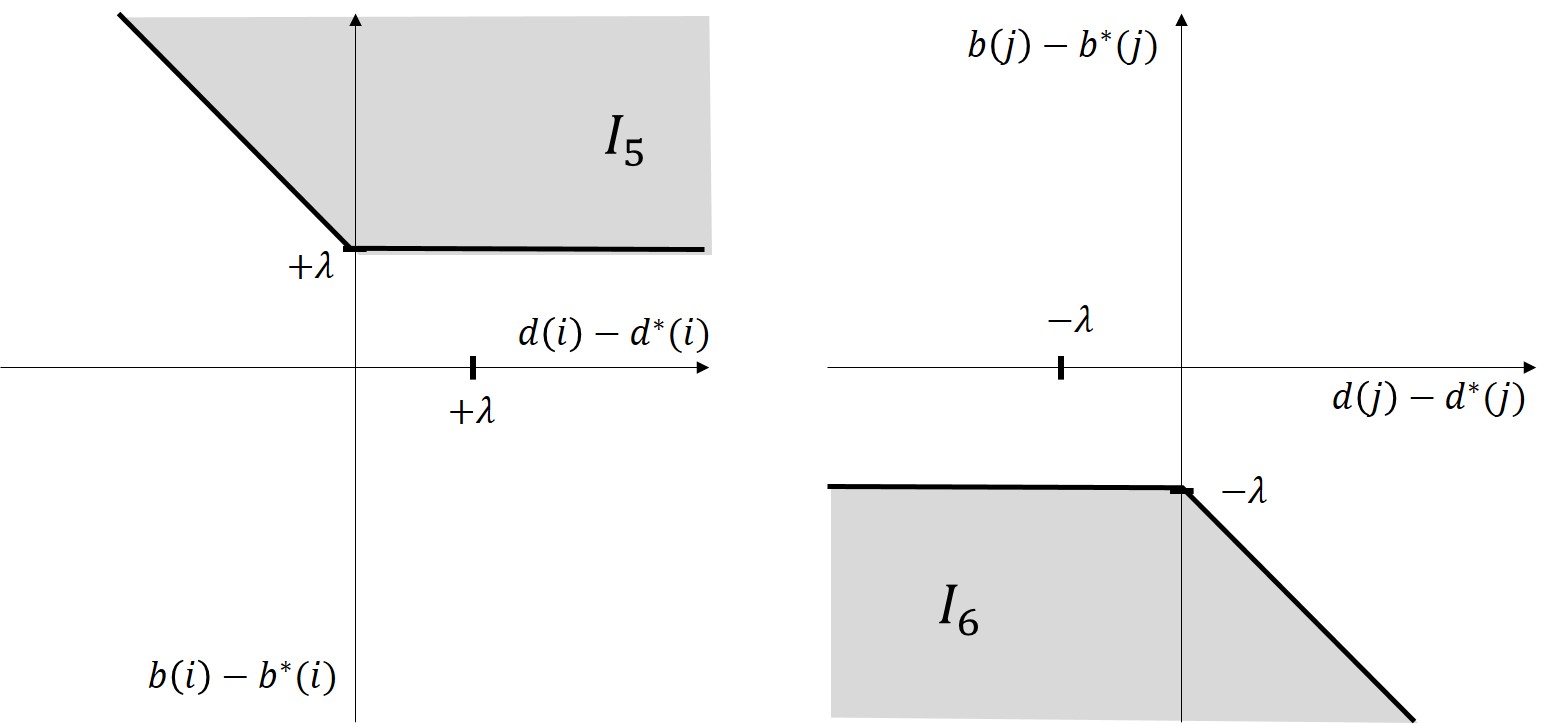}}
\caption{Regions corresponding to sets $I_5$ and $I_6$ \label{fig:I56}} 
\end{center}
\end{figure}

Derivation of the inequality $ \|x  - x^*\|_1 < 10 \lambda n$ is
done by case analysis.
 For this, define the following six sets (see Figures \ref{fig:I12}, \ref{fig:I34},
and \ref{fig:I56}):
\begin{align}
& \label{eqn:prox-cond-1} 
I_1 = \{i \in N  \mid
d(i)- d^*(i) \ge \lambda,\ b(i) - b^*(i) \le -\lambda \},
\\
& \label{eqn:prox-cond-2} 
I_2 = \{j \in N  \mid
d(j)- d^*(j) \le -\lambda,\ b(j) - b^*(j) \ge \lambda\},
\\
& \label{eqn:prox-cond-3} 
I_3 = \{i \in N  \mid
x(i)- x^*(i) \ge \lambda,\  d(i) - d^*(i) \ge \lambda\},
\\
& \label{eqn:prox-cond-4} 
I_4 = \{j \in N  \mid
x(j)- x^*(j) \le -\lambda,\  d(j) - d^*(j) \le -\lambda\},
\\
& \label{eqn:prox-cond-5} 
I_5 = \{i \in N  \mid
x(i)- x^*(i) \ge \lambda,\  b(i) - b^*(i) \ge \lambda\},
\\
& \label{eqn:prox-cond-6} 
I_6 = \{j \in N  \mid 
x(j)- x^*(j) \le -\lambda,\  b(j) - b^*(j) \le -\lambda\}.
\end{align}
 For $t=1,2,\dots, 6$, we also denote
\[
 I_t^P = I_t \cap P, \qquad  I_t^Q = I_t \cap Q,
\]
and consider the following four cases:
\begin{quote}
(Case P1)  $I_4^P = I_6^P= \emptyset$, \\
(Case P2)  $I_3^P  = I_5^P  = \emptyset$, \\
(Case P3)   
$I_3^P\ne \emptyset$, $I_6^P\ne \emptyset$,
$I_2= \emptyset$, $I_4^P= I_5^P  = \emptyset$, $b(N)-b^*(N) > - \lambda$,
and $d(N)-d^*(N) < \lambda$, 
\\
(Case P4)  
$I_4^P\ne \emptyset$, $I_5^P \ne \emptyset$,
$I_1= \emptyset$, $I_3^P= I_6^P  = \emptyset$, $b(N)-b^*(N) <  \lambda$, 
and $d(N)-d^*(N) > -\lambda$.
\end{quote}
 We will show that the cases P1, P2, P3, and P4 
cover all possible cases
by using the fact that $(d,b)$ (resp., $(d^*, b^*)$)
is an optimal solution of (DR$'(\lambda)$) (resp., DR$'$).
 Proof is given in Section \ref{sec:proof-Lemma8}.

\begin{lemma}
\label{lem:p1p2p3p4occurs}
 At least one of the cases P1, P2, P3, and P4 occurs.
\end{lemma}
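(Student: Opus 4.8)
The plan is to prove exhaustiveness by the reduction: if neither (Case~P1) nor (Case~P2) occurs, then one of (Case~P3), (Case~P4) must. Negating the two easy cases, $\neg$P1 reads ``$I_4^P\ne\emptyset$ or $I_6^P\ne\emptyset$'' and $\neg$P2 reads ``$I_3^P\ne\emptyset$ or $I_5^P\ne\emptyset$''. Throughout I record the structural identities coming from feasibility and \eqref{eqn:x-x*-ineq}: $x(P)\le\xi_P=x^*(P)$, $x(Q)\ge\xi_Q=x^*(Q)$, and $x(N)=x^*(N)=D+B$; in particular $d(N)-d^*(N)=-(b(N)-b^*(N))$, so the two global conditions listed inside each of P3 and P4 are equivalent and only one needs checking.

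The engine is a single \emph{exchange principle}: whenever index sets with the prescribed support/sign patterns and gaps $\ge\lambda$ are simultaneously nonempty, pushing exactly $\lambda$ units along the matching direction yields a point that is still feasible for DR$'(\lambda)$ yet has strictly smaller objective, contradicting optimality of $(d,b)$. Feasibility is verified uniformly: moving components by multiples of $\lambda$ preserves the mod-$\lambda$ constraint; every move is built to keep $d(N)+b(N)=D+B$ and the sums $x(P),x(Q)$ unchanged, so $d(P)+b(P)\le\xi_P$ and $d(Q)+b(Q)\ge\xi_Q$ survive; the defining gaps $x(\cdot)-x^*(\cdot)\ge\lambda$ (resp. $\le-\lambda$) together with $\ell'\le x^*\le u'$ leave exactly enough room for the box constraints $\ell'\le x\le u'$; and each move is $b$-nonincreasing, or else a strict-slack sign condition on $b(N)-b^*(N)$ guarantees $b(N)\le B$. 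Strict decrease comes either directly from the matching part of Lemma~\ref{lem:mono-dec}, or, when the needed direction is not listed there, from the corresponding inequality of Proposition~\ref{prop:c-ineq} chained $\lambda$ times against the minimality of $\|d-d^*\|_1+\|b-b^*\|_1$: if a single unit step failed to strictly decrease $c$, the reverse unit step applied to $(d^*,b^*)$ would keep it DR$'$-optimal but strictly closer to $(d,b)$, contradicting the choice of $(d^*,b^*)$, so each step is strict and the $\lambda$-fold composite beats $(d,b)$.

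Applying the principle to within-$d$ exchanges (Lemma~\ref{lem:mono-dec}(i) on $i\in I_3^P$, $h\in I_4^P$) gives (C1): $I_3^P$ and $I_4^P$ are not both nonempty; within-$b$ exchanges (Lemma~\ref{lem:mono-dec}(ii) on $j\in I_6^P$, $k\in I_5^P$) give (C2): $I_5^P$ and $I_6^P$ are not both nonempty. Feeding $\neg$P1 and $\neg$P2 through (C1),(C2) yields a clean dichotomy: if $I_3^P\ne\emptyset$ then (C1) forces $I_4^P=\emptyset$, whence $\neg$P1 forces $I_6^P\ne\emptyset$ and (C2) forces $I_5^P=\emptyset$, i.e.\ the ``P3-pattern'' $I_3^P,I_6^P\ne\emptyset$, $I_4^P=I_5^P=\emptyset$; otherwise $I_3^P=\emptyset$, and then $\neg$P2 forces $I_5^P\ne\emptyset$, (C2) forces $I_6^P=\emptyset$, and $\neg$P1 forces $I_4^P\ne\emptyset$, i.e.\ the ``P4-pattern'' (the two are exclusive, since $I_3^P,I_5^P$ both nonempty would, via (C1),(C2), empty $I_4^P$ and $I_6^P$ and contradict $\neg$P1). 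It then remains to upgrade each pattern to the full case. In the P4-pattern I kill $I_1\ne\emptyset$ by the three-index exchange Lemma~\ref{lem:mono-dec}(iv) on $(s,i,j)\in I_1\times I_4^P\times I_5^P$, and I kill $b(N)-b^*(N)\ge\lambda$ by the mixed exchange Lemma~\ref{lem:mono-dec}(iii) on $(i,j)\in I_4^P\times I_5^P$ (a $b$-decreasing move, feasible as $b(N)\le B$), giving P4. Symmetrically, in the P3-pattern I kill $I_2\ne\emptyset$ by applying \eqref{eqn:c-ineq-6} to $\big((d,b),(d^*,b^*)\big)$ with $(i,j,s)\in I_3^P\times I_6^P\times I_2$, and I kill $b(N)-b^*(N)\le-\lambda$ by the $d$-to-$b$ conversion $d\mapsto d-\lambda\chi_{i}$, $b\mapsto b+\lambda\chi_{j}$ on $(i,j)\in I_3^P\times I_6^P$, via \eqref{eqn:c-ineq-4} and minimality, giving P3.

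The main obstacle is the asymmetry forced by the one-sided constraint $b(N)\le B$: every exchange supplied by Lemma~\ref{lem:mono-dec} is $b$-nonincreasing, so the P4-pattern's global condition (``$b$ too high'') is eliminated by a direct $b$-decreasing move on $(d,b)$, but the P3-pattern's global condition (``$b$ too low'', $b(N)\le b^*(N)-\lambda$) has no $b$-increasing analogue in Lemma~\ref{lem:mono-dec}. It must instead be eliminated by the $d$-to-$b$ conversion that raises $b(N)$ by $\lambda$, which is feasible precisely because $b(N)+\lambda\le b^*(N)\le B$, and whose strict improvement is extracted from \eqref{eqn:c-ineq-4} chained against minimality rather than from a listed monotonicity. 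Making every feasibility check line up — especially the box constraints on the three-index moves of Lemma~\ref{lem:mono-dec}(iv) and the direction of the conversion chains — is the delicate bookkeeping the argument must carry out.
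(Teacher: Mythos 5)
Your proposal is correct and follows essentially the same route as the paper: your (C1)/(C2) are exactly Lemma~\ref{lem:setI-2}, your ``upgrade'' exchanges (the three-index move via Lemma~\ref{lem:mono-dec}(iv), the $b$-decreasing move via Lemma~\ref{lem:mono-dec}(iii), and the mirrored moves via \eqref{eqn:c-ineq-6} and \eqref{eqn:c-ineq-4} chained against minimality of $\|d^*-d\|_1+\|b^*-b\|_1$) are exactly the content of Lemma~\ref{lem:setI-4}(i)--(iv), and the concluding dichotomy argument matches the paper's. The only difference is organizational (you derive the P3/P4 patterns first and then eliminate the side conditions, rather than stating the exclusions as standalone lemmas), which does not change the substance.
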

\noindent

In addition, we consider additional four cases
Q1, Q2, Q3, and Q4, which are
obtained by replacing the sets $I^P_t$
 in P1, P2, P3, and P4 with $I^Q_t$,
and show that the cases Q1, Q2, Q3, and Q4 cover all possible cases;
proof is similar to that for Lemma \ref{lem:p1p2p3p4occurs} and omitted.

\begin{lemma}
\label{lem:p1p2p3p4occursQ}
 At least one of the cases Q1, Q2, Q3, and Q4 occurs.
\end{lemma}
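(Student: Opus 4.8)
The plan is to prove Lemma \ref{lem:p1p2p3p4occursQ} by repeating the argument for Lemma \ref{lem:p1p2p3p4occurs} \emph{mutatis mutandis}, with the region $P$ replaced by $Q$ everywhere, and to check that the one structural difference between the regions never obstructs the argument: on $P$ the minimizing pair satisfies the aggregate inequality $x(P)\le\xi_P=x^*(P)$, whereas on $Q$ it satisfies the reversed inequality $x(Q)\ge\xi_Q=x^*(Q)$ (see \eqref{eqn:x-x*-ineq}). As in the $P$-case, the engine is the optimality of $(d,b)$ for DR$'(\lambda)$: whenever a small family of the sets $I_t^Q$ is simultaneously nonempty, one assembles a full $\lambda$-step which, by the strict-descent inequalities of Lemma \ref{lem:mono-dec}, strictly decreases $c$; as long as this $\lambda$-step is feasible for DR$'(\lambda)$, it contradicts optimality and forces one of the sets to be empty. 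Thus the whole proof reduces to (a) a list of forbidden configurations and (b) a bookkeeping argument showing that ruling them out leaves only Q1--Q4.

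First I would record the within-$Q$ forbidden pairs. Applying Lemma \ref{lem:mono-dec}(i) to $i\in I_3^Q$, $h\in I_4^Q$ and telescoping over $\lambda'=0,\dots,\lambda-1$ (legitimate since $d(i)-d^*(i)\ge\lambda$ and $d^*(h)-d(h)\ge\lambda$) yields the point $(d-\lambda\chi_i+\lambda\chi_h,b)$; this is a DR$'(\lambda)$-point because the move is balanced on $x(Q)$, so $x(Q)=\xi_Q$ is preserved, and the box constraints hold since $x(i)-\lambda\ge x^*(i)\ge\ell'(i)$ and $x(h)+\lambda\le x^*(h)\le u'(h)$. Hence $I_3^Q$ and $I_4^Q$ cannot both be nonempty, and symmetrically Lemma \ref{lem:mono-dec}(ii) shows $I_5^Q$ and $I_6^Q$ cannot both be nonempty. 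Assuming neither Q1 nor Q2 holds, these two facts force exactly one of $I_3^Q,I_5^Q$ to be nonempty, and a short deduction (using not-Q1 to fill in the complementary set) pins the configuration down to either $I_3^Q\ne\emptyset$, $I_6^Q\ne\emptyset$, $I_4^Q=I_5^Q=\emptyset$ (the skeleton of Q3) or $I_4^Q\ne\emptyset$, $I_5^Q\ne\emptyset$, $I_3^Q=I_6^Q=\emptyset$ (the skeleton of Q4).

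It then remains to verify the side conditions. Using $x(N)=x^*(N)=D+B$, the two conditions $b(N)-b^*(N)>-\lambda$ and $d(N)-d^*(N)<\lambda$ (resp.\ $b(N)-b^*(N)<\lambda$ and $d(N)-d^*(N)>-\lambda$) are equivalent, so only one net-flow bound must be established per case. In the Q3 skeleton, if $b(N)-b^*(N)\le-\lambda$ then $b(N)+\lambda\le b^*(N)\le B$, so the mixed exchange decreasing $d(i)$ and increasing $b(j)$ for $i\in I_3^Q$, $j\in I_6^Q$ (derived from \eqref{eqn:c-ineq-4} together with the distance-minimality of $(d^*,b^*)$) yields a feasible DR$'(\lambda)$-point of strictly smaller cost, a contradiction; hence $b(N)-b^*(N)>-\lambda$. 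The condition $I_2=\emptyset$ then follows from the balanced four-index exchange combining $I_3^Q,I_6^Q$ with a putative $I_2$-index, built from Proposition \ref{prop:c-ineq} exactly as parts (iv)--(v) are built. The Q4 side conditions are obtained symmetrically: Lemma \ref{lem:mono-dec}(iii) (with $i\in I_4^Q$, $j\in I_5^Q$, whose $\lambda$-step is feasible precisely when $b(N)-b^*(N)\ge\lambda$) supplies $b(N)-b^*(N)<\lambda$, and Lemma \ref{lem:mono-dec}(iv) (with $s\in I_1$) supplies $I_1=\emptyset$.

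The main obstacle is the bookkeeping forced by the reversed orientation on $Q$: every feasibility check of a constructed improving solution must now confirm $x(Q)\ge\xi_Q$ instead of $x(P)\le\xi_P$, while simultaneously respecting $b(N)\le B$ and $d,b\ge\0$. For the within-region balanced exchanges this is immediate, since the aggregate $x(Q)$ is unchanged; the delicate steps are the $b(N)$-changing exchanges (parts (iii)--(iv) and the \eqref{eqn:c-ineq-4}-based mirror) and the cross-region exchange of Lemma \ref{lem:mono-dec}(v), cases (a)/(b), where one must re-verify both the support and sign hypotheses along the whole telescoped path and that the endpoint respects the oppositely oriented aggregate bound on $Q$. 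I expect this sign-and-feasibility audit, rather than any new idea, to be the only real work, exactly as in the proof of Lemma \ref{lem:p1p2p3p4occurs}.
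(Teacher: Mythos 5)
Your proposal is correct and follows exactly the route the paper intends: the supporting Lemmas \ref{lem:setI-2} and \ref{lem:setI-4} are already stated for $S\in\{P,Q\}$, so instantiating them with $S=Q$ and repeating the case analysis of Section \ref{sec:proof-Lemma8} verbatim (using $d(N)-d^*(N)=-(b(N)-b^*(N))$ from $x(N)=x^*(N)=D+B$) gives the Q-version, which is precisely what you reconstruct. The only blemishes are cosmetic: the $I_2$-exchange is a three-index (not four-index) move, and in the Q4 step the condition $b(N)-b^*(N)\ge\lambda$ is needed for feasibility of the perturbed reference optimum $(d^*-\chi_i,b^*+\chi_j)$ in the descent chain rather than for the $\lambda$-step on $(d,b)$ itself.
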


 Hence, we have 16 possible combinations of the cases 
in $\{$P1, P2, P3, P4$\}$ and in $\{$Q1, Q2, Q3, Q4$\}$,
and we denote each of possible combinations as
P1--Q1, P1--Q2, P1--Q3, etc.

 We then show that the cases P3--Q4 and P4--Q3 are not possible, 
and derive 
the inequality $|x(i)  - x^*(i)| < 10 \lambda n$ 
for all remaining cases.

\begin{lemma}
\label{lem:bound-cases}
{\rm (i)}
The cases P3--Q4 and P4--Q3 do not occur.
\\
{\rm (ii)}
 For all combinations of the cases,
except for P3--Q4 and P4--Q3,
it holds that $|x(i)  - x^*(i)| < 10 \lambda n$ for $i \in N$.
\end{lemma}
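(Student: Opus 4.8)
The plan is to run both parts off a single \emph{forbidden-exchange} principle. Since $(d,b)$ is optimal for DR$'(\lambda)$ and any move by integer multiples of $\lambda$ preserves the residues prescribed by the scaling constraint, any perturbation of $(d,b)$ by a full $\lambda$-step that (a) keeps $d(N)$, $b(N)$, $x(P)$ and $x(Q)$ unchanged, (b) respects $\ell'\le x\le u'$, and (c) strictly lowers $c$ is impossible. The inequalities of Lemma~\ref{lem:mono-dec} are precisely strictly-decreasing directions (each single unit toward $(d^*,b^*)$ lowers $c$), so both parts reduce to deciding when a $\lambda$-step along such a direction is feasible: wherever it is, optimality of $(d,b)$ is contradicted, and wherever it is blocked, the blocking inequality bounds the coordinate differences.

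For part~(i), consider P3--Q4. By definition $I_3^P$, $I_6^P$, $I_4^Q$ and $I_5^Q$ are all nonempty, so pick $i\in I_3^P$, $j\in I_6^P$, $h\in I_4^Q$, $k\in I_5^Q$; these are four distinct indices with $i,j\in P$ and $h,k\in Q$ meeting exactly the hypotheses of case~(a) of Lemma~\ref{lem:mono-dec}(v). Membership in $I_3,\dots,I_6$ gives $\min\{d(i)-d^*(i),\,b^*(j)-b(j),\,d^*(h)-d(h),\,b(k)-b^*(k)\}\ge\lambda$, so the full $\lambda$-step $d\mapsto d-\lambda\chi_i+\lambda\chi_h$, $b\mapsto b+\lambda\chi_j-\lambda\chi_k$ is admissible in \eqref{eqn:lem:mono-dec-5} and strictly lowers $c$. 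This step leaves $d(N)$, $b(N)$, $x(P)$ and $x(Q)$ unchanged, and the box constraints survive since, e.g., the new values satisfy $x(i)-\lambda\ge x^*(i)\ge\ell'(i)$ and $x(h)+\lambda\le x^*(h)\le u'(h)$ (the $\ge\lambda$ gaps are measured against $x^*$, which itself lies in $[\ell',u']$); hence the perturbed point is feasible for DR$'(\lambda)$, contradicting optimality of $(d,b)$. The case P4--Q3 is symmetric, using case~(b) of \eqref{eqn:lem:mono-dec-5} with $P$ and $Q$ interchanged.

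For part~(ii), fix the combination and set up the aggregate bookkeeping. By \eqref{eqn:x-x*-ineq} the quantity $a:=x^*(P)-x(P)=x(Q)-x^*(Q)$ is nonnegative, and splitting each side into positive and negative parts $m_P^{\pm}$, $m_Q^{\pm}$ yields $m_P^-=m_P^++a$ and $m_Q^+=m_Q^-+a$, whence $|x(i)-x^*(i)|\le m_P^++a$ for $i\in P$ and $|x(i)-x^*(i)|\le m_Q^-+a$ for $i\in Q$; it thus suffices to bound $m_P^+$, $m_Q^-$ and $a$. In the one-sided cases P1, P2 (resp.\ Q1, Q2) the emptiness of the two ``down'' (resp.\ ``up'') sets forces every same-sign large difference on that side to split between $d$ and $b$ with each part below $\lambda$, so that coordinate difference is below $2\lambda$; summing over at most $n$ coordinates bounds the relevant half of the imbalance by $2\lambda n$, and $a$ is then controlled either on the same side or through the partner case. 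In the two-sided cases P3, P4, Q3, Q4 both signs occur, and here the bound comes again from the forbidden-exchange principle: the sum conditions $b(N)-b^*(N)\in(-\lambda,\lambda)$, $d(N)-d^*(N)\in(-\lambda,\lambda)$ together with the status of the conjugate sets $I_1$, $I_2$ (which govern the availability of a mixed index $s\in\suppp(d-d^*)\cap\suppm(b-b^*)$ needed in \eqref{eqn:lem:mono-dec-4}) guarantee that pushing a coordinate difference past the claimed threshold would assemble a feasible strictly-improving $\lambda$-step through \eqref{eqn:lem:mono-dec-3}, \eqref{eqn:lem:mono-dec-4}, or the cross-side \eqref{eqn:lem:mono-dec-5}, which is impossible. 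Combining the $P$-bound from the P-case with the $Q$-bound from the Q-case and propagating $a$ across sides, each $|x(i)-x^*(i)|$ is at most a sum of a bounded number of $2\lambda n$-type contributions, which I would tally to the stated $10\lambda n$.

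The main obstacle is the two-sided part of~(ii). The one-sided bounds are elementary counting and part~(i) is a single clean application of \eqref{eqn:lem:mono-dec-5}, but in the P3/P4 combinations large positive and large negative differences coexist on one side, so no purely within-side exchange can rebalance $d(N)$ and $b(N)$ at once. For each such combination \emph{not} excluded by~(i) one must instead exhibit a feasible $\lambda$-step whose admissibility hinges delicately on the sum conditions (to preserve $b(N)\le B$) and on whether a mixed index $s$ is available via $I_1$, $I_2$. Verifying feasibility of these steps and carrying out the induced accounting---which is exactly where the constant $10$ is generated---is the technical heart of the argument and the step I expect to require the most care.
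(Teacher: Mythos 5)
Your part~(i) is correct and is essentially the paper's argument: pick $i\in I_3^P$, $j\in I_6^P$, $h\in I_4^Q$, $k\in I_5^Q$, check that the $\lambda$-step $(d-\lambda\chi_i+\lambda\chi_h,\;b+\lambda\chi_j-\lambda\chi_k)$ is feasible for DR$'(\lambda)$ (it preserves $x(P)$, $x(Q)$, $b(N)$ and respects the box constraints because the $\ge\lambda$ gaps are measured against the feasible point $x^*$), and contradict optimality of $(d,b)$ via inequality~\eqref{eqn:lem:mono-dec-5}. The paper packages exactly this as Lemma~\ref{lem:3p6p4q5q-empty}. Your sketch of the one-sided combinations in part~(ii) (P1--Q1, P1--Q2, etc.) also matches the paper: emptiness of the two ``down'' sets forces the pointwise bound $x(i)-x^*(i)>-2\lambda$, and together with $x(P)\le x^*(P)$, $x(Q)\ge x^*(Q)$ this gives $\|x-x^*\|_1<4\lambda n$ by elementary accounting.

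There is, however, a genuine gap in part~(ii) for every combination involving P3, P4, Q3, or Q4, and it is not merely an omitted computation: the mechanism you propose for these cases is not the one that produces the bound. You suggest that ``pushing a coordinate difference past the claimed threshold would assemble a feasible strictly-improving $\lambda$-step'' via \eqref{eqn:lem:mono-dec-3}--\eqref{eqn:lem:mono-dec-5}. But a single oversized coordinate difference does not by itself furnish such a step; the exchange arguments require two or four indices lying in specific sets $I_t^S$, and all of the set-emptiness information they can yield has already been extracted in defining the cases (Lemmas~\ref{lem:setI-2}, \ref{lem:setI-4}, \ref{lem:3p6p4q5q-empty}). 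No further exchange is available or needed. What actually remains is pure bookkeeping: in, say, case P1--Q3 one converts the emptiness conditions into the pointwise bounds $d(i)-d^*(i)\ge-2\lambda$ on all of $N$ and $b(i)-b^*(i)\le 2\lambda$ on $Q$, combines them with the aggregate conditions $d(N)-d^*(N)<\lambda$ and $b(N)-b^*(N)>-\lambda$ to get $\|d-d^*\|_1<4\lambda n$ and $\sum_{i\in Q}|b(i)-b^*(i)|<4\lambda|Q|+2\lambda|P|$, and then adds these to the P-side bound $\sum_{i\in P}|x(i)-x^*(i)|<4\lambda|P|$ to obtain $8\lambda n+2\lambda|P|\le 10\lambda n$. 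This three-way split of $\|x-x^*\|_1$ into a $P$-part measured in $x$ and a $Q$-part measured separately in $d$ and $b$ is where the constant $10$ comes from, and it is absent from your proposal; you explicitly defer it as ``the technical heart,'' so as written the proof of part~(ii) is incomplete for these cases.
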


 This concludes the proof of  Theorem \ref{thm:bdr-proximity}.

\subsection{Proof of Lemma \ref{lem:mono-dec}}
\label{proof:lem:mono-dec}

In the following we prove (v) only since other claims can 
be proved similarly and more easily.

 Setting
\[
d' =  d - \lambda'\chi_i + \lambda' \chi_h, 
\quad b' = b + \lambda'\chi_j - \lambda' \chi_k,
\quad x'=d'+b',
\]
we can rewrite the inequality \eqref{eqn:lem:mono-dec-5} as 
\begin{align}
 \label{eqn:lem:3p6p4q5q-empty:3}
&  c(d', b') > c(d'- \chi_i+  \chi_h, b' + \chi_j - \chi_k).
\end{align}

 By the choice of $\lambda'$ and $i,j,k,h$, we have
\begin{align*}
& i \in \suppp(d'- d^*) \cap \suppp(x' -x^*),\ 
 j \in \suppm(b'- b^*) \cap \suppm(x' -x^*),\\
& h \in \suppm(d'- d^*) \cap \suppm(x' -x^*),\ 
 k \in \suppp(b'- b^*) \cap \suppp(x' -x^*).
\end{align*}
 Hence, it follows from \eqref{eqn:c-ineq-5} in Proposition \ref{prop:c-ineq} that
 \begin{equation}
 \label{eqn:lem:3p6p4q5q-empty:4}
  c(d',b') + c(d^*, b^*) \ge 
 c(d'- \chi_i +  \chi_h, b' + \chi_j - \chi_k) 
+ c(d^* +  \chi_i -  \chi_h, b^* - \chi_j + \chi_k).
 \end{equation}

 Note that $(d^{**}, b^{**}) \equiv (d^* +  \chi_i -  \chi_h, b^* - \chi_j + \chi_k)$
is also a feasible solution of DR$'$ since
\begin{align*}
& (d^* +  \chi_i -  \chi_h)(P) +  (b^* - \chi_j + \chi_k)(P)
 =  d^*(P)+b^*(P), 
\\
& (d^* +  \chi_i -  \chi_h)(Q) +  (b^* - \chi_j + \chi_k)(Q)
 =  d^*(Q)+b^*(Q), 
\\
& b^{**}(N) =b^*(N) \le B.
\end{align*}
 Since $i \in \suppp(d'- d^*)$,
$j \in \suppp(b'- b^*)$,
$h \in \suppm(d'- d^*)$, and
$k \in \suppp(b'- b^*)$,
we have 
 \[
 \|d^{**} - d\|_1 + \|b^{**} - b\|_1
= (\|d^* - d\|_1 -2) + (\|b^* -b \|_1 - 2) 
 < \|d^* - d\|_1 + \|b^* -b \|_1. 
 \]
 Since $(d^*,b^*)$ is  an optimal solution of {\rm DR$'$}
that minimizes the value $\|d^* - d\|_1 + \|b^* - b\|_1$,
the inequality $c(d^*, b^*) <c(d^{**}, b^{**})$ holds.
 This inequality, together with \eqref{eqn:lem:3p6p4q5q-empty:4}, implies
the desired inequality \eqref{eqn:lem:3p6p4q5q-empty:3}.

\subsection{Proof of Lemma \ref{lem:p1p2p3p4occurs}} 
\label{sec:proof-Lemma8}

We use the following two lemmas to prove Lemma \ref{lem:p1p2p3p4occurs}.

\begin{lemma}
\label{lem:setI-2} 
 Let $S\in \{P,Q\}$. \\
{\rm (i)}
At least one of $I_3^S = \emptyset$ and  $I_4^S = \emptyset$ holds.
\\
 {\rm (ii)}
At least one of $I_5^S = \emptyset$ and $I_6^S = \emptyset$ holds.
\end{lemma}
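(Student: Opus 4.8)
The goal is to prove Lemma~\ref{lem:setI-2}, which asserts that for each $S \in \{P,Q\}$, the sets $I_3^S, I_4^S$ cannot both be nonempty, and likewise for $I_5^S, I_6^S$. The plan is to argue by contradiction: suppose both $I_3^S \ne \emptyset$ and $I_4^S \ne \emptyset$ for part (i) (the argument for (ii) will be symmetric, using $b$-coordinates in place of $d$-coordinates). From $I_3^S \ne \emptyset$ I pick an index $i \in I_3^S$, so that $x(i) - x^*(i) \ge \lambda$ and $d(i) - d^*(i) \ge \lambda$; from $I_4^S \ne \emptyset$ I pick $h \in I_4^S$, so that $x(h) - x^*(h) \le -\lambda$ and $d(h) - d^*(h) \le -\lambda$. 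Crucially, both lie in the same set $S \in \{P,Q\}$.

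The next step is to feed these two indices into Lemma~\ref{lem:mono-dec}(i), which is exactly the monotone-strict-decrease statement for an $i,h$ pair with $i \in \suppp(d-d^*) \cap \suppp(x-x^*)$, $h \in \suppm(d-d^*) \cap \suppm(x-x^*)$, and $i,h$ in the same $S$. My memberships $i \in I_3^S$ and $h \in I_4^S$ give exactly these support and sign conditions, and moreover guarantee that $\min\{d(i)-d^*(i),\, d^*(h)-d(h)\} \ge \lambda$, so the parameter range $0 \le \lambda' < \min\{d(i)-d^*(i),\, d^*(h)-d(h)\}$ contains all integers $0,1,\dots,\lambda-1$. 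Chaining the strict inequality \eqref{eqn:lem:mono-dec-1} across $\lambda'=0,1,\dots,\lambda-1$ yields
\[
 c(d,b) > c(d - \lambda\chi_i + \lambda\chi_h,\, b).
\]

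The final step is to exhibit the contradiction. I would set $(d',b') = (d - \lambda\chi_i + \lambda\chi_h,\, b)$ and verify that it is feasible for DR$'(\lambda)$: the scaling congruences are preserved because we shift $d$ by multiples of $\lambda$ at $i$ and $h$; the total $d(N)+b(N)$ is unchanged; $b(N)$ is untouched; and the box and partition-sum constraints $d(P)+b(P) \le \xi_P$, $d(Q)+b(Q)\ge\xi_Q$, $\ell' \le d+b \le u'$ survive precisely because $i,h$ lie in the \emph{same} block $S$, so the transfer of $\lambda$ units from coordinate $h$ to coordinate $i$ keeps $x(S)$ fixed and moves each of $x(i), x(h)$ by $\lambda$ in the directions permitted by the slacks $x(i)-x^*(i)\ge\lambda$ and $x^*(h)-x(h)\ge\lambda$ (here I would use \eqref{eqn:x-x*-ineq} together with the box feasibility of $(d^*,b^*)$ to confirm the new values stay within $[\ell',u']$). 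Then $(d',b')$ is feasible for DR$'(\lambda)$ with strictly smaller objective than the optimal $(d,b)$, a contradiction. The main obstacle I anticipate is precisely this feasibility bookkeeping in the contradiction step: one must check each constraint of DR$'(\lambda)$ individually and see that the ``same-block'' hypothesis $i,h \in S$ is exactly what preserves the $\xi_P,\xi_Q$ constraints, whereas the box constraints are preserved using the $\lambda$-sized slacks encoded in the definitions of $I_3^S$ and $I_4^S$. Part (ii) follows identically, replacing $d$ by $b$, the pair $(I_3^S,I_4^S)$ by $(I_5^S,I_6^S)$, and invoking Lemma~\ref{lem:mono-dec}(ii) in place of (i).
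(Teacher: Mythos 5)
Your proposal is correct and follows essentially the same route as the paper's proof: pick $i\in I_3^S$ and $h\in I_4^S$, show that $(d-\lambda\chi_i+\lambda\chi_h,\,b)$ is feasible for DR$'(\lambda)$ (using that $i,h$ lie in the same block to preserve the $\xi_P,\xi_Q$ constraints and the slacks $x(i)-x^*(i)\ge\lambda$, $x^*(h)-x(h)\ge\lambda$ together with feasibility of $(d^*,b^*)$ for the box constraints), and contradict optimality of $(d,b)$ via the chained strict decrease from Lemma~\ref{lem:mono-dec}(i); part (ii) is the symmetric argument with $b$ in place of $d$.
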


\begin{proof}
 We consider the case $S=P$ only since the proof for the case $S=Q$ 
can be done in the same way.

 We first prove (i).
 Assume, to the contrary, that both of
 $I^P_3 \ne \emptyset$ and $I^P_4 \ne \emptyset$ hold,
and let $i \in I^P_3$ and $j \in I^P_4$.
 Then, $i \ne j$  holds since
$I^P_3 \cap  I^P_4 = \emptyset$ (see Figure \ref{fig:I34}).

 We consider the pair of vectors
 $(d - \lambda\chi_i + \lambda \chi_j, b)$.
 Since $i \in I^P_3$ and $j \in I^P_4$, we have
\begin{align*}
& (d - \lambda\chi_i + \lambda \chi_j)(P) +b(P) = d(P)  +b(P),\\
& (d - \lambda\chi_i + \lambda \chi_j)(Q) +b(Q) = d(Q)  +b(Q),\\
&  (d(i) - \lambda) + b(i) 
 = x(i) - \lambda
 \ge x^*(i) = d^*(i)+b^*(i) \ge \ell'(i),
\\
& (d(j) + \lambda)+b(j) = x(j) + \lambda 
 \le x^*(j) = d^*(j) + b^*(j) \le u'(j).
\end{align*}
 Hence, 
$(d - \lambda\chi_i + \lambda \chi_j, b)$
is  a feasible solution of DR$'(\lambda)$.
 Since $(d,b)$ is an optimal solution of DR$'(\lambda)$,
we have 
\begin{equation}
  \label{eqn:lem:setI-2-1} 
c(d,b) \le  c(d- \lambda\chi_i+ \lambda \chi_j, b).
\end{equation}
 On the other hand,
it follows from $i \in I^P_3$ and $j \in I^P_4$ that
$\min\{d(i)-d^*(i), d^*(s)-d(s)\} \ge \lambda$, which,
together with Lemma \ref{lem:mono-dec} (i),
implies that  
$c(d,b) >  c(d- \lambda\chi_i+ \lambda \chi_j, b)$,
a contradiction to \eqref{eqn:lem:setI-2-1}. 

 We omit the proof for (ii)
since the claim (ii) is symmetric to (i)
and therefore the proof  for (ii) can be obtained by interchanging
the roles of $d, d^*$ with $b, b^*$ in the proof of (i).
\end{proof}

\begin{lemma}
\label{lem:setI-4} 
 Let $S\in \{P,Q\}$. \\
{\rm (i)}
If $I_1 \ne \emptyset$, then
at least one of $I^S_4$ and $I^S_5$ is an empty set.
\\
{\rm (ii)}
If $b(N) - b^*(N) \ge  \lambda$, then
at least one of $I^S_4$ and $I^S_5$ is an empty set.
\\
{\rm (iii)}
If $I_2 \ne \emptyset$, then
at least one of  $I^S_3$ and $I^S_6$ is an empty set.
\\
{\rm (iv)}
If $b(N) - b^*(N) \le  -\lambda$, then
at least one of $I^S_3$ and $I^S_6$ is an empty set.
\end{lemma}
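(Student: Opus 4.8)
The plan is to run exactly the contradiction argument used for Lemma~\ref{lem:setI-2}. For each of the four parts I fix $S\in\{P,Q\}$, assume the stated hypothesis together with the negation of the conclusion (i.e.\ that \emph{both} of the two relevant index sets are nonempty), choose a witness from each set, and then exhibit a single ``$\lambda$-block exchange'' $(d',b')$ that is feasible for DR$'(\lambda)$ and strictly cheaper than $(d,b)$. Since $(d,b)$ is optimal for DR$'(\lambda)$, this is the desired contradiction.

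For part~(i), suppose $I_1\neq\emptyset$, $I^S_4\neq\emptyset$, $I^S_5\neq\emptyset$ and pick $s\in I_1$, $i\in I^S_4$, $j\in I^S_5$; these are pairwise distinct because their $d$-, $b$-, or $x$-signs differ, and from the definitions $s\in\suppp(d-d^*)\cap\suppm(b-b^*)$, $i\in\suppm(d-d^*)\cap\suppm(x-x^*)$, $j\in\suppp(b-b^*)\cap\suppp(x-x^*)$ with $i,j\in S$, which are precisely the hypotheses of Lemma~\ref{lem:mono-dec}(iv). I would take the exchange
\[
(d',b')=(d-\lambda\chi_s+\lambda\chi_i,\ b+\lambda\chi_s-\lambda\chi_j),
\]
so that $x'=d'+b'=x+\lambda\chi_i-\lambda\chi_j$. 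Telescoping \eqref{eqn:lem:mono-dec-4} over $\lambda'=0,1,\dots,\lambda-1$ yields $c(d,b)>c(d',b')$; this is legitimate because $d^*(i)-d(i)$, $b(j)-b^*(j)$, $d(s)-d^*(s)$, and $b^*(s)-b(s)$ are all at least $\lambda$ by membership in $I^S_4$, $I^S_5$, and $I_1$.

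The step I expect to be the real work, and the main obstacle, is verifying that $(d',b')$ is feasible for DR$'(\lambda)$. The modular constraint survives because every coordinate changes by a multiple of $\lambda$; the equality $d'(N)+b'(N)=D+B$ and the bound $b'(N)=b(N)\le B$ hold because the $\pm\lambda\chi_s$ contributions cancel. The crucial point for the block constraints is that $i$ and $j$ lie in the \emph{same} block $S$, so $x'(S)=x(S)$ and $x'(N\setminus S)=x(N\setminus S)$; hence $x'(P)=x(P)\le\xi_P$ and $x'(Q)=x(Q)\ge\xi_Q$ by \eqref{eqn:x-x*-ineq}. Finally the box bounds hold because the exchange pushes $x$ strictly toward the feasible $x^*$ at $i,j$: from $i\in I^S_4$, $x'(i)=x(i)+\lambda\le x^*(i)\le u'(i)$, and from $j\in I^S_5$, $x'(j)=x(j)-\lambda\ge x^*(j)\ge\ell'(j)$, while the opposite one-sided bounds are inherited from $x$ and nonnegativity of $d',b'$ follows from the same $\ge\lambda$ gaps.

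Part~(ii) is the same construction without the witness $s$: with $i\in I^S_4$, $j\in I^S_5$ I would use $(d',b')=(d+\lambda\chi_i,\ b-\lambda\chi_j)$ and invoke Lemma~\ref{lem:mono-dec}(iii), whose range $\lambda'<\min\{d^*(i)-d(i),\,b(j)-b^*(j),\,b(N)-b^*(N)\}$ is met because the hypothesis $b(N)-b^*(N)\ge\lambda$ now supplies the third gap, and $b'(N)=b(N)-\lambda\le B$ keeps the one-sided budget satisfied. Parts~(iii) and~(iv) are the mirror images: for~(iii) I would take $s\in I_2$, $i\in I^S_3$, $j\in I^S_6$ with the exchange $(d+\lambda\chi_s-\lambda\chi_i,\ b-\lambda\chi_s+\lambda\chi_j)$, and for~(iv) take $i\in I^S_3$, $j\in I^S_6$ with $(d-\lambda\chi_i,\ b+\lambda\chi_j)$; the strict cost decrease now follows by the same argument as in Lemma~\ref{lem:mono-dec}, invoking inequalities \eqref{eqn:c-ineq-6} and \eqref{eqn:c-ineq-4} of Proposition~\ref{prop:c-ineq} together with the $\ell_1$-minimality of $(d^*,b^*)$, and feasibility is checked exactly as in~(i)--(ii). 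The one asymmetry to watch is the sign of the budget constraint: in~(iv) the exchange \emph{increases} $b(N)$, so feasibility requires $b'(N)=b(N)+\lambda\le b^*(N)\le B$, which is guaranteed precisely by the hypothesis $b(N)-b^*(N)\le-\lambda$.
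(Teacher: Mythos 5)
Your proposal is correct and follows essentially the same route as the paper: for each part, assume both sets are nonempty, perform the corresponding $\lambda$-fold exchange ($(d-\lambda\chi_s+\lambda\chi_i,\,b+\lambda\chi_s-\lambda\chi_j)$ for (i), $(d+\lambda\chi_i,\,b-\lambda\chi_j)$ for (ii), and their mirrors for (iii)--(iv)), check feasibility for DR$'(\lambda)$, and contradict optimality of $(d,b)$ via the monotone-decrease inequalities of Lemma~\ref{lem:mono-dec} derived from Proposition~\ref{prop:c-ineq}. You also correctly identify the one genuine asymmetry — the one-sided budget constraint $b(N)\le B$, which is exactly what the hypothesis $b(N)-b^*(N)\le-\lambda$ in part (iv) compensates for — so nothing is missing.
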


\begin{proof}
Proof is similar to that for Lemma \ref{lem:setI-2}
and given in \ref{sec:apdx:1}.
\end{proof}

We now prove Lemma \ref{lem:p1p2p3p4occurs}, i.e.,
we show that
at least one of Cases P1, P2, P3, and P4 occurs.
 Suppose that neither of Cases P1 and P2 occurs.
 Then, it holds that
\[
\mbox{$I^P_4 \ne \emptyset$ or $I^P_6 \ne \emptyset$ (or both),
\qquad
and \qquad
$I^P_3 \ne \emptyset$ or $I^P_5 \ne \emptyset$ (or both)}.
\]
 By Lemma \ref{lem:setI-2}, we have either 
\[
     \mbox{$I^P_4  = I^P_5  = \emptyset$, $I^P_3  \ne \emptyset,\  I^P_6  \ne \emptyset$, 
 \quad or \quad
 $I^P_3   = I^P_6  = \emptyset$,
 $I^P_4  \ne \emptyset,\  I^P_5  \ne \emptyset$.}
\]
 In the former case,
we have $I_2 = \emptyset$ by Lemma \ref{lem:setI-4} (iii)
and $b(N) - b^*(N) >  -\lambda$ by Lemma \ref{lem:setI-4} (iv);
 the inequality $b(N) - b^*(N) >  -\lambda$
and the equation $x(N) = D+B = x^*(N)$ imply
$d(N)-d^*(N) < \lambda$.
 Thus, Case P3 occurs in the former case.
 Similarly, the latter case implies Case P4.

\subsection{Proof of Lemma \ref{lem:bound-cases} (i)}
\label{sec:apdx:2}

 Lemma \ref{lem:bound-cases} (i)
follows immediately from the following fact.

\begin{lemma}
\label{lem:3p6p4q5q-empty}
\ \\
{\rm (i)}
At least one of the sets $I_3^P$, $I_6^P$, $I_4^Q$, and  $I_5^Q$
is an empty set.
\\
{\rm (ii)}
At least one of the sets $I_4^P$, $I_5^P$, $I_3^Q$, and  $I_6^Q$
is an empty set.
\end{lemma}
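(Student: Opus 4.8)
The plan is to prove both parts by contradiction, mirroring the argument used for Lemma~\ref{lem:setI-2}, but invoking the four-index inequality \eqref{eqn:lem:mono-dec-5} of Lemma~\ref{lem:mono-dec}(v) in place of the two-index inequalities. I focus on part (i); part (ii) is entirely symmetric, obtained by interchanging the roles of $P$ and $Q$ (equivalently, applying case (b) of Lemma~\ref{lem:mono-dec}(v) in place of case (a)). Suppose, for contradiction, that $I_3^P$, $I_6^P$, $I_4^Q$, $I_5^Q$ are all nonempty, and choose $i \in I_3^P$, $j \in I_6^P$, $h \in I_4^Q$, $k \in I_5^Q$. First I would check that these four indices exactly fit the hypotheses of case (a) of Lemma~\ref{lem:mono-dec}(v): the defining inequalities of the sets give $i \in \suppp(d-d^*) \cap \suppp(x-x^*)$, $j \in \suppm(b-b^*) \cap \suppm(x-x^*)$, $h \in \suppm(d-d^*) \cap \suppm(x-x^*)$, and $k \in \suppp(b-b^*) \cap \suppp(x-x^*)$, with $i,j \in P$ and $h,k \in Q$. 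Moreover the four indices are pairwise distinct, since $i,j \in P$ while $h,k \in Q$, and since $i$ and $j$ (resp.\ $h$ and $k$) carry opposite signs of $x-x^*$.

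Next I would introduce the modified pair $d' = d - \lambda\chi_i + \lambda\chi_h$, $b' = b + \lambda\chi_j - \lambda\chi_k$, and verify that $(d',b')$ is feasible for DR$'(\lambda)$. The displacements keep each coordinate an integer multiple of $\lambda$, so the scaling constraint survives; nonnegativity at the four affected coordinates follows from the ``$\ge\lambda$'' gaps in the set definitions (for instance $d(i) \ge d^*(i)+\lambda \ge \lambda$ and $b(k) \ge b^*(k)+\lambda \ge \lambda$). Because $i,j \in P$ and $h,k \in Q$, the net change of $x$ on $P$ and on $Q$ is zero, so $x'(P) = x(P) \le \xi_P$ and $x'(Q) = x(Q) \ge \xi_Q$ are preserved, as is $x'(N) = D+B$; likewise $b'(N) = b(N) \le B$. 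Finally, the box constraints $\ell' \le x' \le u'$ hold at the four coordinates precisely because the ``$\ge\lambda$'' gaps let $x^*$ absorb the displacement: e.g.\ $x'(i) = x(i)-\lambda \ge x^*(i) \ge \ell'(i)$ and $x'(h) = x(h)+\lambda \le x^*(h) \le u'(h)$, using \eqref{eqn:x-x*-ineq} together with feasibility of $(d^*,b^*)$ for DR$'$.

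With feasibility in hand, optimality of $(d,b)$ for DR$'(\lambda)$ yields $c(d,b) \le c(d',b')$. On the other hand, the same four ``$\ge\lambda$'' gaps guarantee $\lambda \le \min\{d(i)-d^*(i),\, b^*(j)-b(j),\, d^*(h)-d(h),\, b(k)-b^*(k)\}$, so the strict inequality \eqref{eqn:lem:mono-dec-5} is valid for every $\lambda' = 0, 1, \ldots, \lambda-1$; telescoping these $\lambda$ strict inequalities gives $c(d,b) > c(d',b')$, contradicting the previous inequality. This contradiction proves part (i), and part (ii) follows by the symmetric argument with $i \in I_3^Q$, $j \in I_6^Q$, $h \in I_4^P$, $k \in I_5^P$ via case (b) of Lemma~\ref{lem:mono-dec}(v).

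The main obstacle I anticipate is the feasibility verification for $(d',b')$: confirming that shifting mass between the $P$-pair $(i,j)$ and the $Q$-pair $(h,k)$ leaves both $x(P)$ and $x(Q)$ (hence the $\xi_P,\xi_Q$ constraints) unchanged, and that each box bound is respected through the sandwiching $\ell' \le x^* \le u'$ supplied by feasibility of $(d^*,b^*)$. Once feasibility and the range $\lambda' < \min\{\cdots\}$ are both checked, the clash between optimality of $(d,b)$ and the strict monotone decrease of Lemma~\ref{lem:mono-dec}(v) closes the argument immediately.
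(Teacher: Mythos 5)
Your proof is correct and follows essentially the same route as the paper's: a contradiction built from the exchange pair $(d-\lambda\chi_i+\lambda\chi_h,\, b+\lambda\chi_j-\lambda\chi_k)$, whose feasibility for DR$'(\lambda)$ (mass shifts cancel within $P$ and within $Q$, and $b(N)$ is unchanged) yields $c(d,b)\le c(d',b')$, clashing with the telescoped strict decrease from Lemma~\ref{lem:mono-dec}(v). One trivial slip: the coordinate-wise box-constraint verification (e.g.\ $x(i)-\lambda\ge x^*(i)$) follows from the set definitions \eqref{eqn:prox-cond-3}--\eqref{eqn:prox-cond-6} rather than from the aggregate inequalities \eqref{eqn:x-x*-ineq}, but your own wording about the ``$\ge\lambda$'' gaps shows this is what you meant.
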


\begin{proof}
 We prove the statement (i) only since (ii) can be proved similarly.
 Assume, to the contrary, that
all of the sets $I_3^P$, $I_6^P$, $I_4^Q$, and  $I_5^Q$
are nonempty, and let
$i \in I_3^P$, $j \in I_6^P$,
$h \in I_4^Q$, and  $k \in I_5^Q$.
 Then, elements $i,j,h,k$ are distinct 
since $P \cap Q = \emptyset$, $I_3 \cap I_6 = \emptyset$,
and $I_4 \cap I_5 = \emptyset$.

 Consider the pair of vectors
$(\tilde{d}, \tilde{b}) \equiv 
(d - \lambda\chi_i + \lambda \chi_h, b + \lambda\chi_j - \lambda \chi_k)$.
 By the choice of $i, j, h, k$, we have
\begin{align*}
&\ell \le \tilde{d} + \tilde{b} \le u,\\
& \tilde{d}(P) + \tilde{b}(P)
 = d(P) + b(P),\\
& \tilde{d}(Q) + \tilde{b}(Q)
 = d(Q) + b(Q),\\
& \tilde{b}(N) = b(N) \le B.
\end{align*}
 That is, $(\tilde{d}, \tilde{b})$ is a feasible solution
of DR$'(\lambda)$.
 Since $(d,b)$ is an optimal solution of DR$'(\lambda)$,
 we have
\begin{equation}
c(d,b) \le c(d - \lambda\chi_i + \lambda \chi_h, b + \lambda\chi_j - \lambda \chi_k).
\label{eqn:lem:3p6p4q5q-empty:1}
\end{equation}

 On the other hand, the choice of $i,j,k,h$ that
\[
 \min\{ d(i)-d^*(i), b^*(j)-b(j), d^*(h)-d(h), b(k)-b^*(k) \} \ge \lambda,
\]
which, together with Lemma \ref{lem:mono-dec} (v),
implies that  
\[
c(d,b) >
 c(d - \lambda\chi_s + \lambda \chi_i, b + \lambda\chi_s - \lambda \chi_j),
\]
a contradiction to \eqref{eqn:lem:3p6p4q5q-empty:1}.
\end{proof}

\subsection{Proof of Lemma \ref{lem:bound-cases} (ii)}

 We prove Lemma \ref{lem:bound-cases} (ii) by case analysis.
 See Table \ref{tab:proof} for summary of the bounds for all
possible cases.
\begin{table}
\caption{Summary of the bounds for all possible cases.\label{tab:proof}}
\begin{center}
{
 \begin{tabular}{|c|c|c|c|c|}
 \hline
 & P1 & P2 & P3 & P4\\
 \hline
 Q1 & $4\lambda n$ (Lem.~\ref{lem:N1N2N3N4}) & $4\lambda n$ (Lem.~\ref{lem:p1-q2-bound}) 
 & $10\lambda n$ (Lem.~\ref{lem:p3-q1-bound})  & 
$10\lambda n$ (Lem.~\ref{lem:p3-q1-bound})\\
 \hline
 Q2 & $4\lambda n$ (Lem.~\ref{lem:p1-q2-bound}) & $4\lambda n$
	  (Lem.~\ref{lem:N1N2N3N4}) 
 & $10\lambda n$ (Lem.~\ref{lem:p3-q1-bound})  & 
$10\lambda n$ (Lem.~\ref{lem:p3-q1-bound})\\
 \hline
 Q3 & $10\lambda n$ (Lem.~\ref{lem:p1-q3-bound}) & $10\lambda n$
	  (Lem.~\ref{lem:p1-q3-bound})  &
 $8\lambda n$ (Lem.~\ref{lem:N1N2N3N4}) & --- \\
 \hline
 Q4 & $10\lambda n$ (Lem.~\ref{lem:p1-q3-bound})  & 
$10\lambda n$ (Lem.~\ref{lem:p1-q3-bound}) 
& --- & $8\lambda n$ (Lem.~\ref{lem:N1N2N3N4})\\
 \hline
 \end{tabular}
}
\end{center}{}
\end{table}


 We first consider the cases P1--Q1, P2--Q2,
P3--Q3, and P4--Q4, which imply the cases N1, N2, N3, and N4,
respectively, where

\begin{quote}
(Case N1)  $I_4 = I_6= \emptyset$, \\
(Case N2)  $I_3  = I_5  = \emptyset$, \\
(Case N3)   
$I_2= I_4= I_5  = \emptyset$, $b(N)-b^*(N) > - \lambda$,
and $d(N)-d^*(N) < \lambda$, \\
(Case N4)  
$I_1= I_3= I_6  = \emptyset$, $b(N)-b^*(N) <  \lambda$, 
and $d(N)-d^*(N) > -\lambda$.
\end{quote}

\noindent
 The following bounds are known to these cases,
which immediately yields  the bounds for the cases P1--Q1, P2--Q2,
P3--Q3, and P4--Q4:

\begin{lemma}[{\cite[Section 6.4]{Shioura2022}}]
\label{lem:N1N2N3N4}
 We have $\| x-x^* \|_1 < 4 \lambda n$
if Case N1 or N2 occurs, and
$\| x-x^* \|_1 < 8 \lambda n$ if N3 or N4 occurs.
\end{lemma}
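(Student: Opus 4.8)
The plan is to exploit the equality constraint $x(N)=x^*(N)=D+B$, which gives $\sum_{i\in N}\bigl(x(i)-x^*(i)\bigr)=0$ and hence
\[
\|x-x^*\|_1
=2\sum_{i:\,x(i)>x^*(i)}\bigl(x(i)-x^*(i)\bigr)
=2\sum_{i:\,x(i)<x^*(i)}\bigl(x^*(i)-x(i)\bigr).
\]
So it suffices to bound either the positive part or the negative part of $x-x^*$ and double it. I would also invoke two symmetries obtained by exchanging the roles of $(d,b)$ and $(d^*,b^*)$: this swap sends $I_3,I_5$ to $I_4,I_6$ (so Case N1 becomes N2) and sends $\{I_2,I_4,I_5\}$ to $\{I_1,I_3,I_6\}$ while flipping both aggregate inequalities (so N3 becomes N4). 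Hence it is enough to treat N1 and N3.

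For Case N1 ($I_4=I_6=\emptyset$) I would prove the coordinatewise estimate $x^*(i)-x(i)<2\lambda$ for every $i$ with $x(i)<x^*(i)$. Indeed, if $x^*(i)-x(i)\ge 2\lambda$, then from $x(i)-x^*(i)=(d(i)-d^*(i))+(b(i)-b^*(i))$ at least one of $d(i)-d^*(i)\le-\lambda$ or $b(i)-b^*(i)\le-\lambda$ holds; since also $x(i)-x^*(i)\le-\lambda$, this places $i$ in $I_4$ or $I_6$, contradicting $I_4=I_6=\emptyset$. Summing over the at most $n$ negative coordinates bounds the negative part by $2\lambda n$, and doubling gives $\|x-x^*\|_1<4\lambda n$.

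For Case N3 ($I_2=I_4=I_5=\emptyset$, $b(N)-b^*(N)>-\lambda$, $d(N)-d^*(N)<\lambda$) I would bound the positive part $\sum_{x(i)>x^*(i)}(x(i)-x^*(i))$ in three steps. First, every $i$ with $d(i)<d^*(i)$ satisfies $d^*(i)-d(i)<2\lambda$: if $d(i)-d^*(i)\le-\lambda$, then $I_2=\emptyset$ forces $b(i)-b^*(i)<\lambda$ and $I_4=\emptyset$ forces $x(i)-x^*(i)>-\lambda$, so $d(i)-d^*(i)=(x(i)-x^*(i))-(b(i)-b^*(i))>-2\lambda$; summing, $\sum_{d(i)<d^*(i)}(d^*(i)-d(i))<2\lambda n$. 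Second, the aggregate bound $d(N)-d^*(N)<\lambda$ converts this into a bound on the positive $d$-part, $\sum_{d(i)>d^*(i)}(d(i)-d^*(i))=(d(N)-d^*(N))+\sum_{d(i)<d^*(i)}(d^*(i)-d(i))<\lambda+2\lambda n$. Third, $I_5=\emptyset$ shows every coordinate with $x(i)-x^*(i)\ge\lambda$ has $b(i)-b^*(i)<\lambda$, whence $d(i)-d^*(i)>0$ and $x(i)-x^*(i)<(d(i)-d^*(i))+\lambda$; combined with the trivial bound $x(i)-x^*(i)<\lambda$ on coordinates with $0<x(i)-x^*(i)<\lambda$ and the fact that there are at most $n$ positive coordinates in total, the positive part is at most $\sum_{d(i)>d^*(i)}(d(i)-d^*(i))+\lambda n<\lambda+3\lambda n$. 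Doubling gives $\|x-x^*\|_1<2\lambda+6\lambda n\le 8\lambda n$, as claimed.

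The coordinatewise $2\lambda$ estimates are routine; the step I expect to be the main obstacle is the combination in Case N3 (and its mirror N4), where the three emptiness conditions and the aggregate inequality on $d$ must be used simultaneously and the coordinates feeding the three partial sums must be accounted for without overcounting, so that the total stays strictly below $8\lambda n$. Care is also needed to verify the $(d,b)\leftrightarrow(d^*,b^*)$ symmetry precisely, so that N2 reduces to N1 and N4 to N3 rather than to some unintended configuration.
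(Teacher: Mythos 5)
Your proof is correct. The paper itself does not reprove this lemma (it is imported from \cite[Section 6.4]{Shioura2022}), but your argument --- pointwise $2\lambda$ bounds extracted from the emptiness of the relevant $I_t$'s, combined with the identity $\sum_{i\in N}(x(i)-x^*(i))=0$ and the aggregate bound on $d(N)-d^*(N)$, together with the $(d,b)\leftrightarrow(d^*,b^*)$ swap that reduces N2 to N1 and N4 to N3 --- is exactly the technique the paper uses for the analogous cases in Lemmas~\ref{lem:p1-q2-bound} and~\ref{lem:p1-q3-bound}; the only (harmless) difference is that in Case N3 you assemble the bound through the positive part of $x-x^*$ rather than by bounding $\|d-d^*\|_1$ and the $b$-deviations separately, which even gives the marginally sharper estimate $2\lambda+6\lambda n\le 8\lambda n$.
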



We then consider the cases P1--Q2 and P2--Q1.

\begin{lemma}
\label{lem:p1-q2-bound}
 We have $\| x-x^* \|_1 < 4 \lambda n$
if the case P1--Q2 or  P2--Q1 occurs.
\end{lemma}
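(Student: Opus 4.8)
The plan is to reduce everything to a count of the one‑sided deviations of $x-x^*$ and then dispatch the two listed cases by opposite instances of the same bookkeeping. Since $(d,b)$ and $(d^*,b^*)$ both satisfy $x(N)=x^*(N)=D+B$, we have $\sum_{i\in N}(x(i)-x^*(i))=0$, so the total positive deviation equals the total negative deviation; calling this common value $T$ gives $\|x-x^*\|_1=2T$, and it suffices to prove $T<2\lambda n$. I split $N=P\cup Q$ and, for $S\in\{P,Q\}$, write $\pi_S$ for the sum of $x(i)-x^*(i)$ over $i\in S$ with $x(i)>x^*(i)$ and $\nu_S$ for the sum of $x^*(i)-x(i)$ over $i\in S$ with $x(i)<x^*(i)$. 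Then $T=\pi_P+\pi_Q=\nu_P+\nu_Q$, and \eqref{eqn:x-x*-ineq} reads $\nu_P-\pi_P=\pi_Q-\nu_Q=\Delta$ for some $\Delta\ge 0$ (so on $P$ the negative deviations dominate, on $Q$ the positive ones).

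For case P1--Q2 the two hypotheses bound the \emph{dominant} deviations. If $i\in P$ has $x(i)<x^*(i)$, then the excursion cannot reach $-\lambda$ in either coordinate: $i\notin I_4^P$ gives $d(i)-d^*(i)>-\lambda$ and $i\notin I_6^P$ gives $b(i)-b^*(i)>-\lambda$, so $x^*(i)-x(i)<2\lambda$ and hence $\nu_P<2\lambda|P|$; symmetrically Q2 gives $\pi_Q<2\lambda|Q|$. Using $\nu_Q\le\pi_Q$ (from $\Delta\ge 0$) I then get $T=\nu_P+\nu_Q\le\nu_P+\pi_Q<2\lambda(|P|+|Q|)=2\lambda n$, which is the desired bound.

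Case P2--Q1 is the genuinely harder one, because now P2 and Q1 bound only the \emph{sub‑dominant} deviations, $\pi_P<2\lambda|P|$ and $\nu_Q<2\lambda|Q|$, and the naive chain yields only $T<2\lambda n+\Delta$. The plan is to control $\Delta$ — equivalently the dominant deviations $\nu_P,\pi_Q$ — through optimality. The core claim is that a $\lambda$‑deep deficit on $P$ (an $i\in P$ with $x^*(i)-x(i)\ge\lambda$) and a $\lambda$‑deep surplus on $Q$ (a $k\in Q$ with $x(k)-x^*(k)\ge\lambda$) cannot coexist. Given such $i,k$ and budget slack $\Delta\ge\lambda$, I would perform the simultaneous $\lambda$‑step that shifts $\lambda$ units of $x$‑mass from $k$ to $i$ in $(d,b)$ and the reverse in $(d^*,b^*)$: the matching inequality of Proposition~\ref{prop:c-ineq} bounds $c(d,b)+c(d^*,b^*)$ below by the two moved values, the move strictly decreases $\|d^*-d\|_1+\|b^*-b\|_1$ so minimality of $(d^*,b^*)$ forces the $(d^*,b^*)$‑side value to strictly increase, whence the $(d,b)$‑side value strictly decreases; as the moved $(d,b)$ is still feasible for DR$'(\lambda)$ (this is exactly where $\Delta\ge\lambda$ is used, to keep $x(P)\le\xi_P$ and $x(Q)\ge\xi_Q$), this contradicts optimality of $(d,b)$. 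With coexistence excluded one is in one of two cases: all $P$‑deficits are $<\lambda$, whence $\nu_P<\lambda|P|$ and $T=\nu_P+\nu_Q<\lambda|P|+2\lambda|Q|\le 2\lambda n$; or all $Q$‑surpluses are $<\lambda$, giving $T=\pi_P+\pi_Q<2\lambda|P|+\lambda|Q|\le 2\lambda n$. Either way $\|x-x^*\|_1=2T<4\lambda n$.

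The main obstacle is making the P2--Q1 exchange fully rigorous. First, each deep deviation must be routed through the correct single coordinate so that the support hypotheses of the chosen inequality in Proposition~\ref{prop:c-ineq} hold and the moved solutions remain feasible (nonnegativity, $b(N)\le B$, and the mod‑$\lambda$ congruences); this forces a split according to membership of $i$ in $I_4^P$ versus $I_6^P$ and of $k$ in $I_3^Q$ versus $I_5^Q$, invoking Lemma~\ref{lem:mono-dec} in the matching form, and it is precisely here that no purely internal‑to‑$P$ balancing is available because $I_3^P=I_5^P=\emptyset$. Second, the cross‑$P$--$Q$ move spends $\lambda$ units of the $P/Q$ budget, so the borderline regime $\Delta<\lambda$ (where no such feasible move exists) must be separated off and its $O(\lambda)$ residual absorbed using integrality of the deviations together with the identities $\nu_P=\pi_P+\Delta$ and $\pi_Q=\nu_Q+\Delta$, so that the final estimate lands exactly on $4\lambda n$ rather than on a weaker $4\lambda n+O(\lambda)$ bound.
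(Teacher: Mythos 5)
Your treatment of P1--Q2 is correct and is essentially the paper's own argument: the case hypotheses $I_4^P=I_6^P=\emptyset$ and $I_3^Q=I_5^Q=\emptyset$ bound the \emph{dominant} deviations (the negative ones on $P$, the positive ones on $Q$) by $2\lambda$ per station, and \eqref{eqn:x-x*-ineq} lets you discard the sub-dominant ones, giving $\|x-x^*\|_1<4\lambda n$. For P2--Q1 the paper simply declares the case ``symmetric'' to P1--Q2 and writes nothing, whereas you observe that the one-sidedness of \eqref{eqn:x-x*-ineq} makes the naive bookkeeping yield only $2\lambda n+\Delta$ and that $\Delta$ must be controlled by optimality. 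That observation is not unreasonable (the $P\leftrightarrow Q$ relabeling does not preserve the direction of $x(P)\le x^*(P)$, $x(Q)\ge x^*(Q)$). The problem is that your replacement argument for P2--Q1 is a plan rather than a proof, and the steps you explicitly defer are exactly the ones that can fail.

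Concretely: (1) your ``core claim'' quantifies over stations whose $x$-deviation is at least $\lambda$, but to run any exchange from Proposition~\ref{prop:c-ineq} you must route $\lambda$ units through a single coordinate, i.e., you need membership in $I_4^P\cup I_6^P$ (resp.\ $I_3^Q\cup I_5^Q$); this is guaranteed only once the $x$-deviation reaches $2\lambda-1$, and a station whose deficit lies in $[\lambda,2\lambda-2]$ can split it as $\lceil\lambda/2\rceil+\lfloor\lambda/2\rfloor$ and belong to no $I_t$, so your exclusion argument does not reach it. (2) The exchange you need moves mass between a station of $P$ and a station of $Q$, but every part of Lemma~\ref{lem:mono-dec} requires the two exchanged stations to lie in the same part; there is no ``matching form'' to invoke, and you would have to state and prove a new cross-part variant (it does hold, but only in the direction deficit-on-$P$/surplus-on-$Q$, because the mirrored move on $(d^*,b^*)$ must keep $x^*(P)\le\xi_P$). (3) In the mixed sub-cases ($i\in I_4^P$ paired with $k\in I_5^Q$, or $i\in I_6^P$ with $k\in I_3^Q$) the move changes $b(N)$ or $b^*(N)$, and feasibility then hinges on the budget $B$ --- this is precisely why Lemma~\ref{lem:mono-dec}(iii) carries the extra cap $b(N)-b^*(N)$ --- so when $|b(N)-b^*(N)|<\lambda$ no two-station exchange is available; you never address this constraint, and it is not clear how your scheme survives it. (4) The absorption of the residual in the borderline regime $\Delta<\lambda$ is asserted but not carried out (it does work, since $\Delta>0$ forces $P^-\ne\emptyset$ and $Q^+\ne\emptyset$ and shrinks the counts, but this needs to be written). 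Until (1)--(3) are actually resolved, the P2--Q1 half of the lemma is not proved by your argument.
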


\begin{proof}
 Since the cases P1--Q2 and P2--Q1 are symmetric, 
it suffices to consider the case P1--Q2 only.
 Below we will prove the inequalities
\begin{align}
& \sum_{i \in P} |x(i)-x^*(i)| < 4\lambda |P|,
\label{eqn:lem:p1-q2-bound:1}
\\
& \sum_{i \in Q} |x(i)-x^*(i)| < 4\lambda |Q|,
\label{eqn:lem:p1-q2-bound:2}
\end{align}
which can be done in the same way as in the proof for the cases N1 and N2
in \cite{Shioura2022}.
 From \eqref{eqn:lem:p1-q2-bound:1} and \eqref{eqn:lem:p1-q2-bound:2}
we obtain the desired inequality as follows:
\[
 \| x-x^* \|_1 = 
\sum_{i \in P} |x(i)-x^*(i)| + \sum_{i \in Q} |x(i)-x^*(i)| 
< 4\lambda |P| + 4\lambda |Q| = 4\lambda |N| = 4\lambda n.
\]

\begin{figure}[t]
\begin{center}
 {\includegraphics[width=0.3\textwidth]{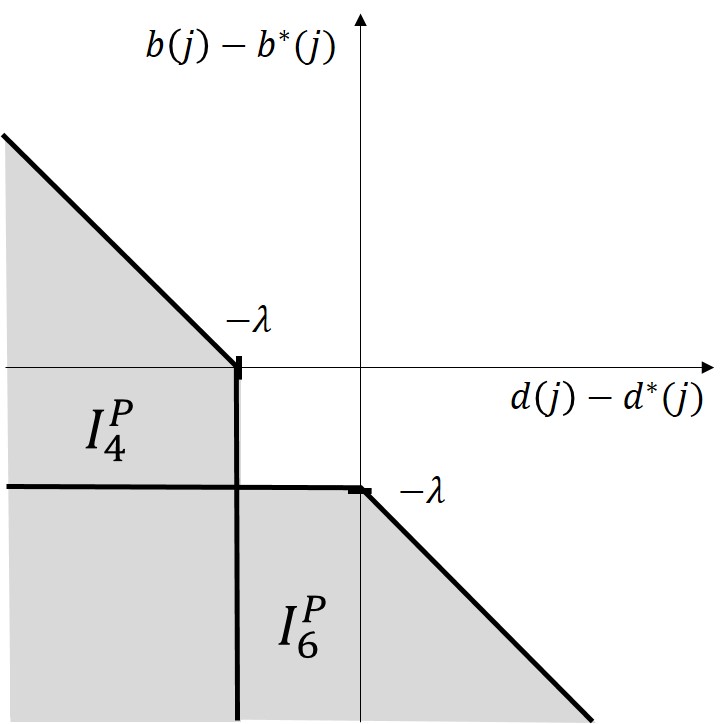}}
\caption{Proof of Lemma \ref{lem:p1-q2-bound}.
$I^P_4= I^P_6  = \emptyset$ implies 
that the point $(d(i) - d^*(i), b(i) - b^*(i))$ for $i \in P$
is not located on the shaded regions, implying that
$x(i) - x^*(i) > - 2 \lambda$. 
\label{fig:i4i6}} 
\end{center}
\end{figure}

 We first prove \eqref{eqn:lem:p1-q2-bound:1}.
 Putting $P^- = \suppm(x-x^*) \cap P$ we have
\begin{align}
\sum_{i \in P} |x(i)-x^*(i)|
& = \sum_{i \in P \setminus P^-}(x(i)-x^*(i))-\sum_{j \in P^-}(x(i)-x^*(i)) 
\notag\\
& = \Big( x(P \setminus P^-) - x^*(P \setminus P^-)\Big)  
- \Big( x(P^-) - x^*(P^-)\Big) 
\notag\\
&
 =  (x(P)-x^*(P))  -2 ( x(P^-) - x^*(P^-))\notag\\
& \le -2 ( x(P^-) - x^*(P^-)),
\label{eqn:i4i6-2}
\end{align}
where the last inequality is by 
\eqref{eqn:x-x*-ineq}.
 Since $I^P_4= I^P_6  = \emptyset$,
we have  (see Figure~\ref{fig:i4i6})
\begin{equation}
\label{eqn:i4i6-1}
  x(i) - x^*(i) > - 2\lambda  \qquad (i \in P),
\end{equation}
which, together with \eqref{eqn:i4i6-2}, implies that
\begin{align*}
\sum_{i \in P} |x(i)-x^*(i)|
&  =   -2 ( x(P^-) - x^*(P^-))
 < 4\lambda |P^-| \le 4\lambda |P|. 
\end{align*}

 The proof of \eqref{eqn:lem:p1-q2-bound:2} can be done in a similar way as
that for \eqref{eqn:lem:p1-q2-bound:1} by using
the inequality $x(Q) \ge x^*(Q)$.
\end{proof}

Proof for the remaining cases is given in \ref{sec:apdx:3}.

\section*{Acknowledgments.}

This work was supported by 
JSPS KAKENHI Grant Number 23K10995.
 The author thanks Kazuo Murota 
for valuable comments on the manuscript.

\newpage

\appendix

\section{Algorithm}

We present a proximity-scaling algorithm for computing an optimal solution
of problem DR (problem DR$'$, to be more precise).
 Our algorithm is similar to the one proposed by Freund et al.~\cite{FHS2022},
but the algorithm description and its analysis given below 
are simpler.

The algorithm consists of several $\lambda$-scaling phases,
where $\lambda$ is a scaling parameter given by a positive integer.
 Initially, $\lambda$ is set to $2^{{\lceil \log_2 ((D+B)/n)\rceil}}$,
and after each scaling phase terminates,
$\lambda$ is divided by two; 
the algorithm stops at the end onf the 1-scaling phase.
 That is, the algorithm consists of $\lceil \log_2 ((D+B)/n)\rceil$ scaling phases.

 In each $\lambda$-scaling phase, we compute an optimal solution
DR$'(\lambda)$ by using 
an optimal solution of DR$'(2\lambda)$ that is computed
in the previous scaling phase.
 We will show that the running time for each scaling phase
is $O(n^2 \log n)$, which results in
the total time complexity $O(n^2 \log n  \log ((D+B)/n))$ 
of the proximity-scaling algorithm.

 To the end of this section, we show that each $\lambda$-scaling phase
can be done in $O(n^2 \log n)$ time.
 Since the problem structure of DR$'(\lambda)$ (resp., DR$'(2\lambda)$)
is the same as that for DR$'(1)$ (resp., DR$'(2)$),
we assume in the following that $\lambda = 1$, i.e.,
 we show that an optimal solution
of DR$'(1)=$DR$'$ can be computed in $O(n^2 \log n)$ time
by using an optimal solution of DR$'(2)$.

 Computation of an optimal solution of DR$'$ consists of the following steps:
\begin{description}
 \item[Step 1]
 Let $(d^{(2)}, b^{(2)})$ be an optimal solution of DR$'(2)$.
Convert $(d^{(2)}, b^{(2)})$ of DR$'(2)$ into
a bike-optimal solution; definition of bike-optimality is given below.

 \item[Step 2] 
We reduce the feasible region of DR$'$
by using $(d^{(2)}, b^{(2)})$.

 \item[Step 3] 
Using $(d^{(2)}, b^{(2)})$ as an initial solution,
compute a feasible solution $(\hat{d}, \hat{b})$  of DR$'$
with the minimum value of 
$(1/2) \|(d+b) - (\bar{d}, \bar{b})\|_1$;
if there are many such feasible solutions, then we take the one 
minimizing the objective function value $c(\hat{d}, \hat{b})$.

 \item[Step 4] 
Using $(\hat{d}, \hat{b})$ as an initial solution,
compute an optimal solution of DR$'$ by a steepest descent algorithm
in \cite{Shioura2022}.
\end{description}
Details of each step will be explained below.

 We first convert the optimal solution $(d^{(2)}, b^{(2)})$ 
of DR$'(2)$ to a bike-optimal solution.
 A feasible solution $(d,b)$ of DR$'$ is said to be \textit{bike-optimal} 
if $c(d,b) \le c(d', b')$ holds for every feasible solution $(d',b')$
of DR$'$ with $d'+b'=d+b$.
 Using the fact that $(d^{(2)}, b^{(2)})$ is an optimal solution of DR$'(2)$,
conversion into a bike optimal solution 
can be done in $O(n \log n)$ time
(see Section 6.3.2 in \cite{Shioura2022}).
 Hence, we may assume that $(d^{(2)}, b^{(2)})$ is bike-optimal.

 We then reduce the feasible region of DR$'$
so that an optimal solution of DR$'$ can be computed efficiently.
 Since $(d^{(2)}, b^{(2)})$ is an optimal solution of DR$'(2)$,
Corollary \ref{coro:bdr-proximity} implies that there exists an optimal solution 
$(d^*,b^*)$ of {\rm DR$'$} such that 
\[
 \|(d^*+b^*)  - (d^{(2)}+b^{(2)})\|_\infty < 20  n.
\]
 Based on this fact, we update the upper/lower bound vector
$u', \ell'$ in the constraint
$\ell' \le d + b \le  u'$ by
\begin{align*}
&  u'(i) :=  \min\{u'(i), d^{(2)}(i)+b^{(2)}(i)+20  n \}
\quad (i \in N),
\\
& \ell'(i) :=  \max\{\ell'(i), d^{(2)}(i)+b^{(2)}(i)-20  n \}
\quad (i \in N).
\end{align*}
After this update, 
every feasible solution   $(d,b)$ of problem DR$'$ satisfies 
\begin{equation}
\label{eqn:gap-u'l'}
 \|(d+b)  - (d^{(2)}+b^{(2)})\|_\infty = O(n).
\end{equation}

 Let $\gamma_{\rm min}$ be an integer given as
\[
 \gamma_{\rm min} = 
\min\{(1/2) \|(d+b) - (\bar{d}+ \bar{b})\|_1
\mid (d,b): \mbox{ feasible solution of DR$'$}\}.
\]
 For each $\gamma'$ with $\gamma_{\rm min} \le \gamma' \le \gamma$,
we define an auxiliary problem DR$'_{\gamma'}$
as the problem obtained from DR$'$ by adding an extra constraint
that 
\[
 \|(d+b) - (\bar{d}+ \bar{b})\|_1 =2 \gamma'.
\]
A feasible solution $(d,b)$ of DR$'$
satisfies  $ \|(d+b) - (\bar{d}, \bar{b})\|_1 =2 \gamma'$
if and only if $(d,b)$ satisfies
\begin{align*}
  d(P)+b(P) &= \bar d(P)+ \bar b(P) + \gamma',\\
   d(Q)+b(Q)& =  \bar d(Q)+ \bar b(Q) - \gamma'.
\end{align*}

By Proposition \ref{prop:l1-dist-const},
an optimal solution of the auxiliary problem DR$'_{\gamma}$
is also optimal to DR$'$,  and vice versa.
 Based on this fact, we compute an optimal solution of DR$'$ by using 
the steepest descent algorithm in \cite{Shioura2022},
which requires an optimal solution of DR$'_{\gamma_{\rm min}}$ 
as an initial solution.
 For this purpose, we compute an optimal solution 
$(\hat{d}, \hat{b})$ of DR$'_{\gamma_{\rm min}}$. 

 Due to the definition of $\gamma_{\rm min}$,
the problem DR$'_{\gamma_{\rm min}}$ can be reformulated
as a relaxed problem of DR$'$.
 Consider the problem obtained from DR$'$ by 
removing the $\ell_1$-distance constraint and
adding an extra term 
$\Upsilon \|(d+b) - (\bar{d} + \bar{b})\|_1$
to the objective function
with a sufficiently large positive $\Upsilon$.
 The resulting objective function is given as
\begin{align*}
&  c(d,b) + \|(d+b) - (\bar{d} + \bar{b})\|_1\\
& = \sum_{i \in N}\left[
c_i(d(i),b(i)) + |(d(i)+b(i)) - (\bar{d}(i) + \bar{b}(i))|
\right],
\end{align*}
and therefore the multimodularity of objective function is preserved.
 Since the reformulated problem does not have
$\ell_1$-distance constraint, 
the result in \cite[Section 6.1]{Shioura2022} can be applied
to show that
an optimal solution of the problem DR$'_{\gamma_{\rm min}}$ is obtained
in $O(n +\tau \log n)$ time with
\begin{align}
  {\tau}  = (1/2)\min\{\|(d+b) - (d^{(2)}+b^{(2)})\|_1& \mid 
\notag\\
& 
\hspace*{-20mm}
(d,b) 
\mbox{ is an optimal solution of DR$'_{\gamma_{\rm min}}$}
\}.
 \label{eqn:def-tau}
\end{align}
 By   \eqref{eqn:gap-u'l'}, we have $\tau  = O(n^2)$, 
implying that
an optimal solution $(\hat{d}, \hat{b})$  
of the problem DR$'_{\gamma_{\rm min}}$ can be 
obtained in $O(n^2 \log n)$ time.

We can compute an optimal solution $(d^*, b^*)$ of 
the problem DR$'_{\gamma}$ by applying the steepest descent algorithm
in \cite[Section 6]{Shioura2022} (see also \cite[Section 4]{Shioura2022}).
 Using the optimal solution $(\hat{d}, \hat{b})$  
of the problem DR$'_{\gamma_{\rm min}}$ as an initial solution,
the steepest descent algorithm iteratively computes 
an optimal solution of the problem DR$'_{\gamma'}$ for 
$\gamma' = \gamma_{\rm min}+1, \gamma_{\rm min}+2, \ldots$
by local changes of a solution,
and finally finds 
an optimal solution of the problem DR$'_{\gamma}$, which
is also an optimal solution of DR$'$.
The running time of the steepest descent algorithm is given as 
$O(n +(\gamma - \gamma_{\rm min}) \log n)$ time.
The equation \eqref{eqn:gap-u'l'} implies 
$\gamma - \gamma_{\rm min} = O(n^2)$.
 Hence, the steepest descent algorithm
finds an optimal solution of DR$'$  in $O(n^2 \log n)$ time.

 This concludes the proof for the statement  that 
an optimal solution of DR$'(1)=$DR$'$ can be computed in $O(n^2 \log n)$ time
by using an optimal solution of DR$'(2)$.


\section{Proofs of Lemmas}

\subsection{Proof of Lemma \ref{lem:setI-4}}
\label{sec:apdx:1}

 All of the claims (i), (ii), (iii),  and (iv) can be proved in a similar way
as in Lemma \ref{lem:setI-2}.
 Below we give proofs of (i) and (ii) for the case $S=P$.

{[Proof of (i)]} \quad
 Assume, to the contrary, that
all of the sets $I_1$, $I^P_4$, and $I^P_5$ 
are nonempty, and let
$s \in I_1$, $i \in I^P_4$, and $j \in I^P_5$.
 Then, elements $s, i,j$ are distinct 
since sets $I_1, I^P_4$, and $I^P_5$ are mutually disjoint
(see Figures \ref{fig:I12}, \ref{fig:I34}, and \ref{fig:I56}).

 Consider the pair of vectors
$(d - \lambda\chi_s + \lambda \chi_i, b + \lambda\chi_s - \lambda \chi_j)$.
 Since $i \in I^P_4$ and $j \in I^P_5$, we have
\begin{align*}
&  (d(i) + \lambda) + b(i) = x(i)+ \lambda
 \le x^*(i) = d^*(i) + b^*(i) \le u'(i),
\\
& d(j) + (b(j) - \lambda) = x(j) - \lambda
 \ge x^*(j) =   d^*(j) + b^*(j) \ge \ell'(j),
\\
& (d(s) - \lambda)+ (b(s) + \lambda) = d(s)+b(s).
\end{align*}
 Since $i, j \in P$, we have
\begin{align*}
& (d - \lambda\chi_s+\lambda \chi_i)(P) +  (b + \lambda\chi_s - \lambda \chi_j)(P)
 =  d(P)+b(P), 
\\
& (d - \lambda\chi_s+\lambda \chi_i)(Q) +  (b + \lambda\chi_s - \lambda \chi_j)(Q)
 =  d(Q)+b(Q), 
\\
& (b + \lambda\chi_i - \lambda \chi_j)(N) = b(N) \le B.
\end{align*}
 From these inequalities and equations
we see that
$(d - \lambda\chi_s + \lambda \chi_i, b + \lambda\chi_s - \lambda \chi_j)$
 is a feasible solution of DR$'(\lambda)$.
 Since $(d,b)$ is an optimal solution of DR$'(\lambda)$,
 we have
\begin{equation}
c(d,b) \le
 c(d - \lambda\chi_s + \lambda \chi_i, b + \lambda\chi_s - \lambda \chi_j).
 \label{eqn:prox-1}
\end{equation}
 On the other hand,
it follows from the choice of $i,j,s$ that
\[
\min\{ d^*(i)-d(i), b(j)-b^*(j), 
d(s)-d^*(s), b^*(s)-b(s)\} \ge \lambda, 
\]
which, together with Lemma \ref{lem:mono-dec} (iv),
implies that 
\[
c(d,b) >
 c(d - \lambda\chi_s + \lambda \chi_i, b + \lambda\chi_s - \lambda \chi_j),
\]
a contradiction to \eqref{eqn:prox-1}.

{[Proof of (ii)]} \quad
 Assume, to the contrary, that
 $b(N) - b^*(N) \ge  \lambda$,
 $I^P_4 \ne \emptyset$, and $I^P_5 \ne \emptyset$ hold,
and let  $i \in I^P_4$ and $j \in I^P_5$.
 Then, elements $i,j$ are distinct 
since sets $I^P_4$ and $I^P_5$ are  disjoint.

 Consider the pair of vectors  $(d + \lambda \chi_i, b - \lambda \chi_j)$.
 Since $i,j \in P$, we have
\begin{align*}
& (d+\lambda \chi_i)(P) +  (b - \lambda \chi_j)(P)
 =  d(P)+b(P), 
\\
& (d + \lambda \chi_i)(Q) +  (b - \lambda \chi_j)(Q)
 =  d(Q)+b(Q), 
\\
& (b  - \lambda \chi_j)(N)  = b(N) - \lambda \le B,
\end{align*}
 Therefore, $(d + \lambda \chi_i, b - \lambda \chi_j)$
is a feasible solution of DR$'(\lambda)$.
 Since $(d,b)$ is an optimal solution of DR$'(\lambda)$,
 we have
\begin{equation}
  \label{eqn:prox-3}
c(d,b)  \le c(d + \lambda \chi_i, b - \lambda\chi_j).
\end{equation}
 On the other hand, it follows from  $i \in I^P_4$, $j \in I^P_5$,
and  $b(N) - b^*(N) \ge  \lambda$
that $\min\{ d^*(i)-d(i), b(j)-b^*(j), b(N)-b^*(N)\} \ge \lambda$,
which, together with Lemma \ref{lem:mono-dec} (iii),
implies that 
$c(d,b) > c(d + \lambda \chi_i, b - \lambda\chi_j)$,
a contradiction to \eqref{eqn:prox-3}.


\subsection{Proofs of the Bounds for the Remaining Cases}
\label{sec:apdx:3}


We first consider the cases P1--Q3, P1--Q4, P2--Q3, and P2--Q4.

\begin{figure}[t]
\begin{center}
{\includegraphics[width=0.6\textwidth]{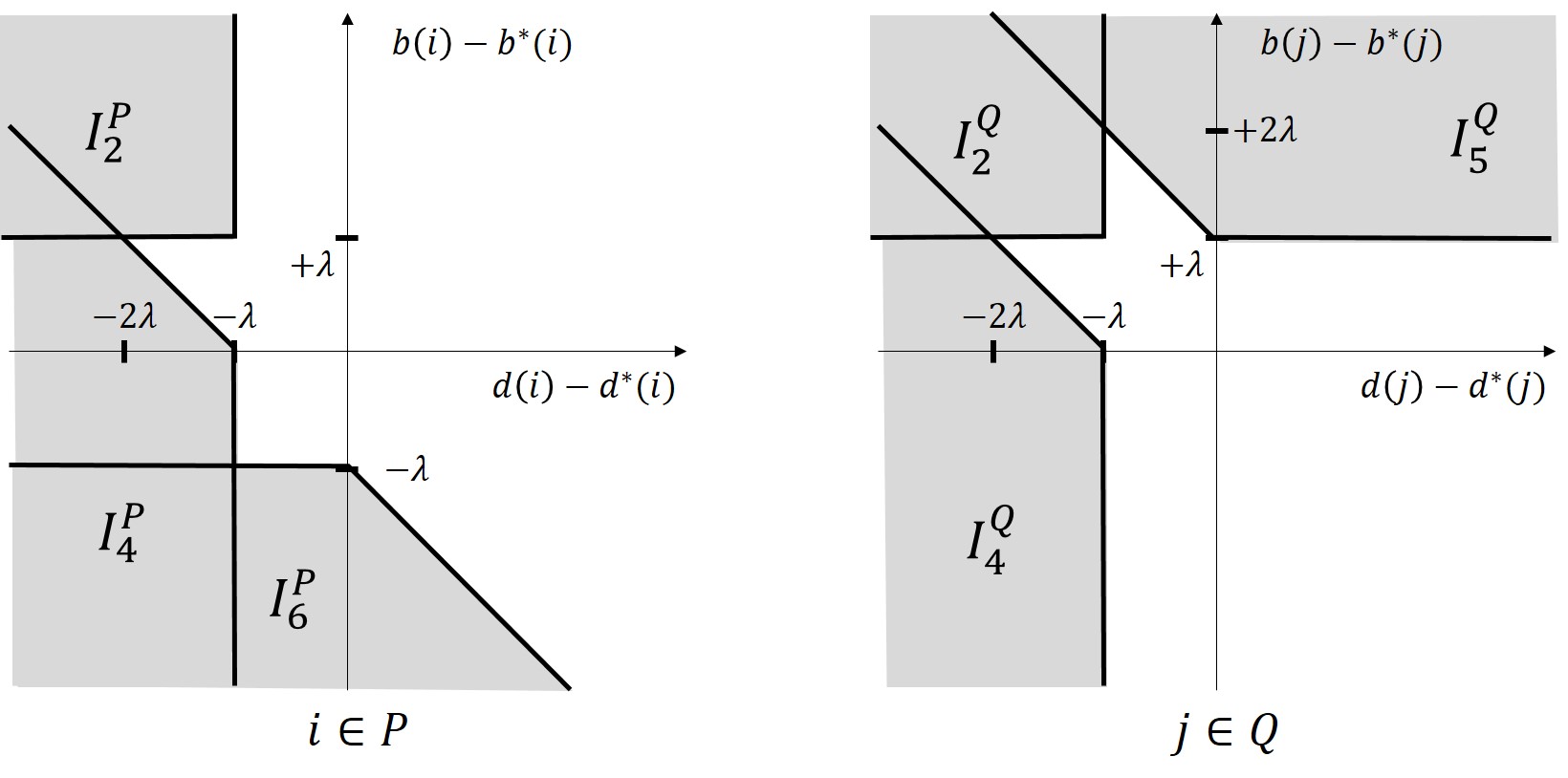}}
\caption{Regions corresponding to the case P1--Q3. 
Combination of the cases P1 and Q3 implies 
that the points $(d(i) - d^*(i), b(i) - b^*(i))$ for $i \in P$
and $(d(j) - d^*(j), b(j) - b^*(j))$ for $j \in Q$
are not located on the shaded regions.
\label{fig:P1Q3}
} 
\end{center}
{}
\end{figure}

\begin{lemma}
\label{lem:p1-q3-bound}
 We have $\| x-x^* \|_1 < 10 \lambda n$
if one of the cases P1--Q3, P1--Q4, P2--Q3, and P2--Q4 occurs.
\end{lemma}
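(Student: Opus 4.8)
The plan is to bound $\|x-x^*\|_1$ by controlling the contributions of $P$ and of $Q$ separately and then adding them, exactly as in the proof of Lemma \ref{lem:p1-q2-bound}. Each of the four cases P1--Q3, P1--Q4, P2--Q3, P2--Q4 pairs one of the ``$P$-cases'' P1, P2 with one of the ``$Q$-cases'' Q3, Q4, so I would bound $\sum_{i\in P}|x(i)-x^*(i)|$ using the $P$-hypothesis and $\sum_{j\in Q}|x(j)-x^*(j)|$ using the $Q$-hypothesis.

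For the $P$-sum I would reuse the computation in Lemma \ref{lem:p1-q2-bound}. Under P1 the emptiness of $I_4^P$ and $I_6^P$ forces $x(i)-x^*(i)>-2\lambda$ for every $i\in P$ (a point with $x(i)-x^*(i)\le-2\lambda$ would satisfy $d(i)-d^*(i)\le-\lambda$ or $b(i)-b^*(i)\le-\lambda$, hence lie in $I_4^P$ or $I_6^P$); together with $x(P)\le x^*(P)$ from \eqref{eqn:x-x*-ineq} this gives $\sum_{i\in P}|x(i)-x^*(i)|<4\lambda|P|$ verbatim. Under P2 the emptiness of $I_3^P$ and $I_5^P$ symmetrically forces $x(i)-x^*(i)<2\lambda$ for $i\in P$; writing $P^+=\suppp(x-x^*)\cap P$ and using the global equality $x(N)=x^*(N)$, I would rewrite $\sum_{i\in P}|x(i)-x^*(i)|=2(x(P^+)-x^*(P^+))-(x(P)-x^*(P))<4\lambda|P^+|+(x(Q)-x^*(Q))$.

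The key simplification is that the imbalance term $x(Q)-x^*(Q)$ cancels upon adding the two sums. Setting $Q^+=\suppp(x-x^*)\cap Q$, the same splitting gives the exact identity
\[
 \sum_{j\in Q}|x(j)-x^*(j)| = 2\bigl(x(Q^+)-x^*(Q^+)\bigr) - \bigl(x(Q)-x^*(Q)\bigr),
\]
so in the P2 cases the two occurrences of $x(Q)-x^*(Q)$ cancel and the total is bounded by $4\lambda|P^+|+2(x(Q^+)-x^*(Q^+))$, while in the P1 cases it is bounded by $4\lambda|P|+2(x(Q^+)-x^*(Q^+))$. Everything therefore reduces to bounding the aggregate positive $Q$-deviation $x(Q^+)-x^*(Q^+)$ by a small multiple of $\lambda|Q|$.

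This last step is the main obstacle. Unlike P1/P2, the hypotheses of Q3 (namely $I_4^Q=I_5^Q=\emptyset$, $I_2=\emptyset$, $b(N)-b^*(N)>-\lambda$, $d(N)-d^*(N)<\lambda$) do \emph{not} bound $x(j)-x^*(j)$ pointwise on $Q$: a positive-deviation point is kept out of $I_5^Q$ only to the extent that $b(j)-b^*(j)<\lambda$, while its $d$-coordinate may be arbitrarily large. Hence I must pass to aggregates, combining the pointwise one-sided control from $I_5^Q=\emptyset$ (resp. $I_3^Q=\emptyset$ under Q4) and from $I_2=\emptyset$ (resp. $I_1=\emptyset$) with the global sums $\sum_{i\in N}(d(i)-d^*(i))<\lambda$ and $\sum_{i\in N}(b(i)-b^*(i))>-\lambda$ (resp. their Q4 analogues) to offset the total positive deviation on $Q$ against deviations elsewhere. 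This is precisely the balancing argument underlying the $8\lambda n$ bounds for Cases N3 and N4 in Lemma \ref{lem:N1N2N3N4}, now carried out over $Q$ only. Feeding the resulting bound on $x(Q^+)-x^*(Q^+)$ back into the reduction of the previous paragraph, together with $|P|+|Q|=n$, yields $\|x-x^*\|_1<10\lambda n$.
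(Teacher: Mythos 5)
Your reduction is sound as far as it goes: the splitting identities, the pointwise bounds $x(i)-x^*(i)>-2\lambda$ under P1 and $x(i)-x^*(i)<2\lambda$ under P2, and the conclusion that everything hinges on bounding $x(Q^+)-x^*(Q^+)$ (with your $Q^+=\suppp(x-x^*)\cap Q$) are all correct. But that last step is the entire content of the lemma, and the way you propose to discharge it fails. You cannot invoke Lemma \ref{lem:N1N2N3N4} here: it is quoted from \cite{Shioura2022} without proof in this paper, and its Case N3 requires $I_5=\emptyset$ on all of $N$, whereas under P1--Q3 only $I_5^Q=\emptyset$ is available and $I_5^P$ may well be nonempty. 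More fundamentally, the balancing argument cannot be ``carried out over $Q$ only'': the only sum constraints Q3 supplies are over all of $N$, namely $d(N)-d^*(N)<\lambda$ and $b(N)-b^*(N)>-\lambda$; Q3 imposes no constraint on $d(Q)$, $b(Q)$, or $x(Q)$ beyond \eqref{eqn:x-x*-ineq}. To localize $d(N)-d^*(N)<\lambda$ to $Q^+$ you must bound $\sum_{i\in P}(d^*(i)-d(i))$ from above, i.e., you need a pointwise lower bound on $d(i)-d^*(i)$ for $i\in P$ --- and no hypothesis of Q3 restricted to $Q$ says anything about $P$.

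The missing idea is that the P- and Q-hypotheses must be used \emph{jointly}, not separately as your framing dictates. From $I_4^P=\emptyset$ (case P1) combined with $I_2=\emptyset$ (case Q3) one gets $d(i)-d^*(i)\ge-2\lambda$ for every $i\in P$; together with $d(i)-d^*(i)\ge-2\lambda$ and $b(i)-b^*(i)\le2\lambda$ on $Q$ (from $I_4^Q=I_5^Q=I_2=\emptyset$), the global sum constraints become usable. This is exactly how the paper argues: it bounds $\|d-d^*\|_1<4\lambda n$ over all of $N$ (splitting on $\suppp(d-d^*)$ and using $d(N)-d^*(N)<\lambda$ plus the pointwise bound everywhere), then $b(P)-b^*(P)\le2\lambda|P|$, hence $b(Q)-b^*(Q)>-\lambda-2\lambda|P|$, then $\sum_{i\in Q}|b(i)-b^*(i)|<4\lambda|Q|+2\lambda|P|$, and finally combines these via $|x(i)-x^*(i)|\le|d(i)-d^*(i)|+|b(i)-b^*(i)|$ on $Q$ to get $8\lambda n+2\lambda|P|\le10\lambda n$. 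Your reduction could indeed be completed along the same lines, since $x(Q^+)-x^*(Q^+)\le d(Q^+)-d^*(Q^+)+2\lambda|Q^+|<\lambda+2\lambda(n-|Q^+|)+2\lambda|Q^+|=\lambda+2\lambda n$, giving $4\lambda|P|+2\lambda+4\lambda n\le10\lambda n$ --- but the first inequality in that chain already uses the $P$-side pointwise bound, which is precisely what your ``bound $P$ by the P-hypothesis, $Q$ by the Q-hypothesis'' plan omits. As written, the proposal therefore has a genuine gap; note also that you never verify the final arithmetic, which is not a formality here since the natural bounds leave little slack below $10\lambda n$.
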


\proof{Proof.}
 We first consider the case P1--Q3 (see Figure \ref{fig:P1Q3}).
 Since the case P1 occurs, we have
\[
 \sum_{i \in P}|x(i)- x^*(i)| < 4 \lambda |P| 
\]
(see \eqref{eqn:lem:p1-q2-bound:1} in the proof of Lemma \ref{lem:p1-q2-bound}).
 We will show that 
\begin{align}
& \|d- d^* \|_1 < 4\lambda n, \label{eqn:lem:p1-q3-bound:1}\\
& \sum_{i \in Q}|b(i)- b^*(i)| < 4 \lambda|Q| + 2 \lambda |P|. 
\label{eqn:lem:p1-q3-bound:2}
\end{align}
These inequalities imply the desired inequality as follows:
\begin{align*}
  \| x-x^* \|_1 
& = \sum_{i \in P}|x(i)- x^*(i)| + \sum_{i \in Q}|x(i)- x^*(i)|\\
& \le \sum_{i \in P}|x(i)- x^*(i)|
 + \sum_{i \in Q}|d(i)- d^*(i)| + \sum_{i \in Q}|b(i)- b^*(i)|\\
& \le \sum_{i \in P}|x(i)- x^*(i)|
 + \|d- d^*\|_1 + \sum_{i \in Q}|b(i)- b^*(i)|\\
& < 4 \lambda |P| + 4 \lambda n + (4 \lambda|Q| + 2 \lambda |P|)\\
& = 8 \lambda n  + 2 \lambda |P| \le 10 \lambda n.
\end{align*}

  We observe that (see Figure \ref{fig:P1Q3})
$I_4^P = I_6^P= I_2 = \emptyset$ implies
\begin{align}
 &
d(i)-d^*(i) \ge - 2\lambda, \quad
x(i) - x^*(i) \ge  - 2\lambda \qquad (i\in P),
\label{eqn:lem:p1-q3-bound:3}
\end{align} 
and
$I_4^Q= I_5^Q  = I_2 = \emptyset$ implies
\begin{align}
 &
d(i)-d^*(i) \ge - 2\lambda, \quad
b(i) - b^*(i) \le   2\lambda \qquad (i\in Q).
\label{eqn:lem:p1-q3-bound:4}
\end{align} 
 We use these inequalities to prove \eqref{eqn:lem:p1-q3-bound:1} 
and \eqref{eqn:lem:p1-q3-bound:2}.

{[Proof of \eqref{eqn:lem:p1-q3-bound:1}]} \quad
 Put $N' = \suppp(d-d^*)$.
 If $N' = \emptyset$, then we have
\begin{align*}
\|d - d^*\|_1
& = - \sum_{i \in N}(d(i)- d^*(i))
 <   2\lambda \cdot|N| = 4\lambda n,
\end{align*}
where the inequality is by
\eqref{eqn:lem:p1-q3-bound:3} and \eqref{eqn:lem:p1-q3-bound:4}.
 Otherwise, $|N \setminus N'| \le n-1$ and therefore it holds that
\begin{align*}
\|d - d^*\|_1
& = \sum_{i \in N'}(d(i)- d^*(i))
- \sum_{i \in N \setminus N'}(d(i)- d^*(i))\\
& =  (d(N')-d^*(N')) - (d(N\setminus N')-d^*(N\setminus N'))\\
& =  (d(N)-d^*(N)) - 2(d(N\setminus N')-d^*(N\setminus N'))\\
& < \lambda + 2 \cdot 2\lambda \cdot|N \setminus N'|\\
& \le 4\lambda n,
\end{align*}
where the first inequality is by the assumption 
 $d(N)-d^*(N) < \lambda$,
\eqref{eqn:lem:p1-q3-bound:3}, and \eqref{eqn:lem:p1-q3-bound:4}.

{[Proof of \eqref{eqn:lem:p1-q3-bound:2}]} \quad
 We first bound the values $b(P) - b^*(P)$ and $b(Q)- b^*(Q)$.
 It holds that
\begin{align*}
b(P)  - b^*(P)
& =\sum_{i \in P}(b(i)- b^*(i))\\
& =\sum_{i \in P}[(x(i)- x^*(i)) - (d(i)- d^*(i))]\\
& =(x(P)- x^*(P)) - \sum_{i \in P}(d(i)- d^*(i))
 \le 0 + 2\lambda |P|,  
\end{align*}
where the inequality is by 
\eqref{eqn:x-x*-ineq} and \eqref{eqn:lem:p1-q3-bound:3}.
 From this inequality it follows that 
\begin{align}
 b(Q)- b^*(Q)
& =  (b(N)- b^*(N)) - (b(P)- b^*(P))
 > - \lambda - 2\lambda |P|,
\label{eqn:p1-q3-ineq10}
\end{align}
where the inequality is by the assumption $b(N)- b^*(N) > - \lambda$.

 We now bound the value $\sum_{i \in Q}|b(i)- b^*(i)|$.
 Put $Q' = \suppp(b-b^*) \cap Q$.
 If $Q' = Q$, it holds that
\begin{align*}
\sum_{i \in Q}|b(i)- b^*(i)|
& = b(Q)- b^*(Q) \le  2\lambda |Q|,
\end{align*}
where the inequality is by \eqref{eqn:lem:p1-q3-bound:4}. 
 Otherwise, we have $|Q'| \le |Q| - 1$ and therefore
 it holds that
\begin{align*}
\sum_{i \in Q}|b(i)- b^*(i)|
& = (b(Q')- b^*(Q')) - (b(Q \setminus Q')- b^*(Q \setminus Q'))\\
& = 2(b(Q')- b^*(Q')) - (b(Q)- b^*(Q))\\
& < 2 \cdot 2\lambda |Q'| + (\lambda + 2 \lambda |P|) \\
& \le 4 \lambda|Q| + 2 \lambda |P|,
\end{align*}
where the first inequality is by \eqref{eqn:p1-q3-ineq10}.

This concludes the proof for the case P1--Q3.
We see that each of the cases P1--Q4, P2--Q3, and P2--Q4 is 
symmetric to the case P1--Q3 (see Figures \ref{fig:P1Q4}, \ref{fig:P2Q3},
and \ref{fig:P2Q4}),
the proofs for these cases can be done in the same way and therefore omitted.
\qed

\medskip

\begin{figure}[t]
\begin{center}
{\includegraphics[width=0.6\textwidth]{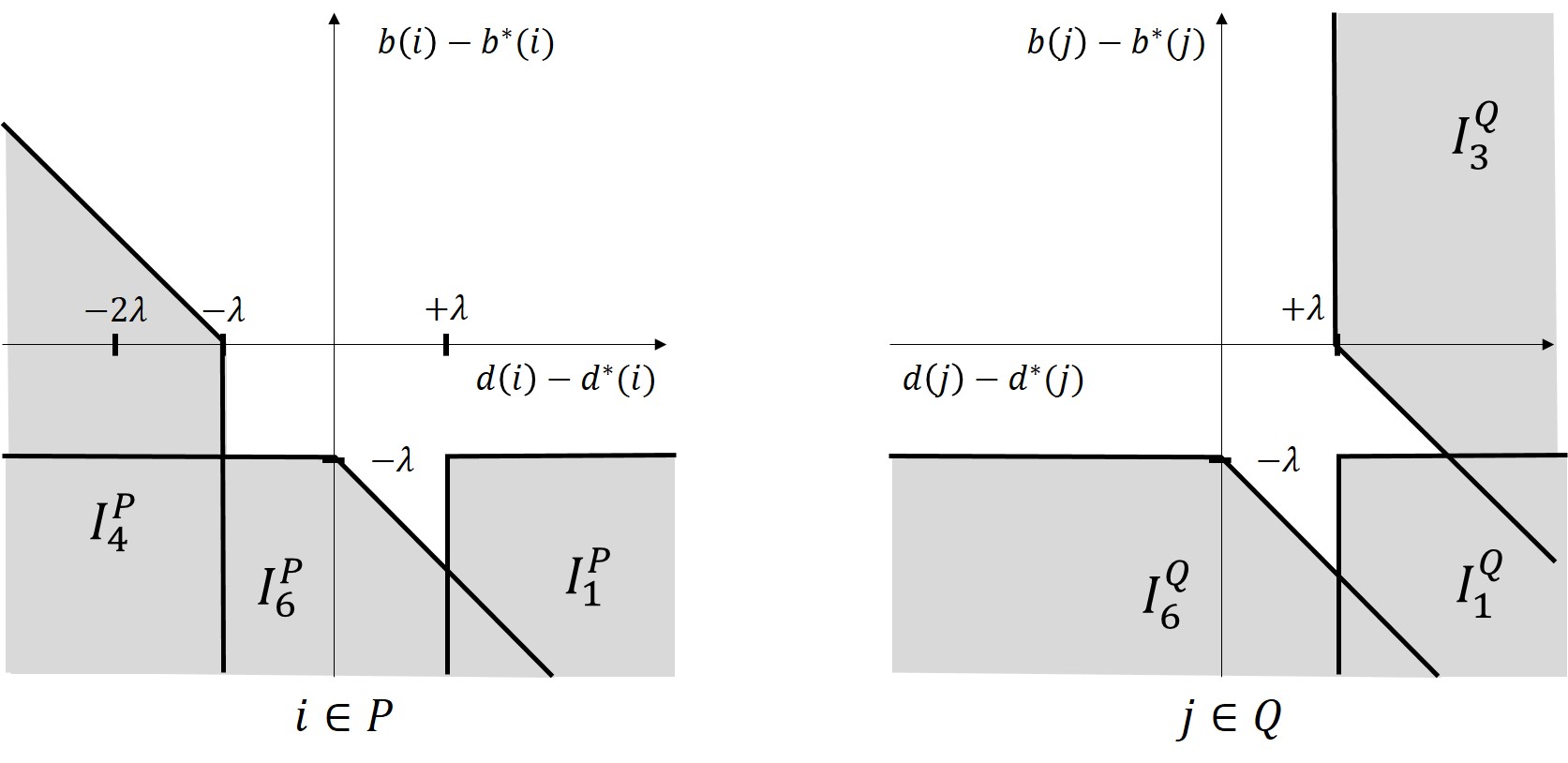}}
\caption{Regions corresponding to Case P1--Q4. 
\label{fig:P1Q4}
} 
\end{center}
{}
\end{figure}

\begin{figure}[t]
\begin{center}
{\includegraphics[width=0.6\textwidth]{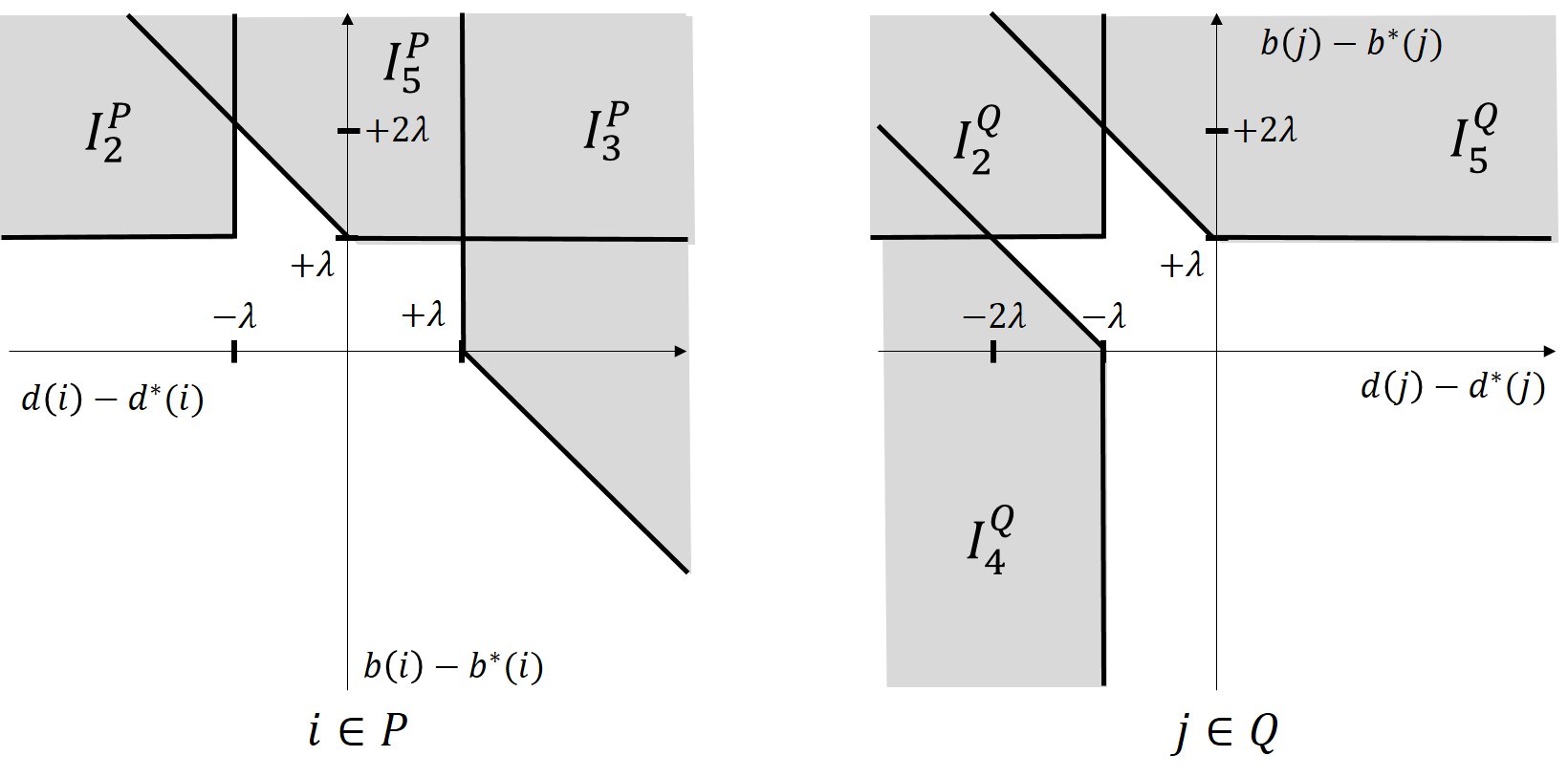}}
\end{center}
\caption{Regions corresponding to Case P2--Q3. 
\label{fig:P2Q3}
} 
{}
\end{figure}

\begin{figure}[t]
\begin{center}
{\includegraphics[width=0.6\textwidth]{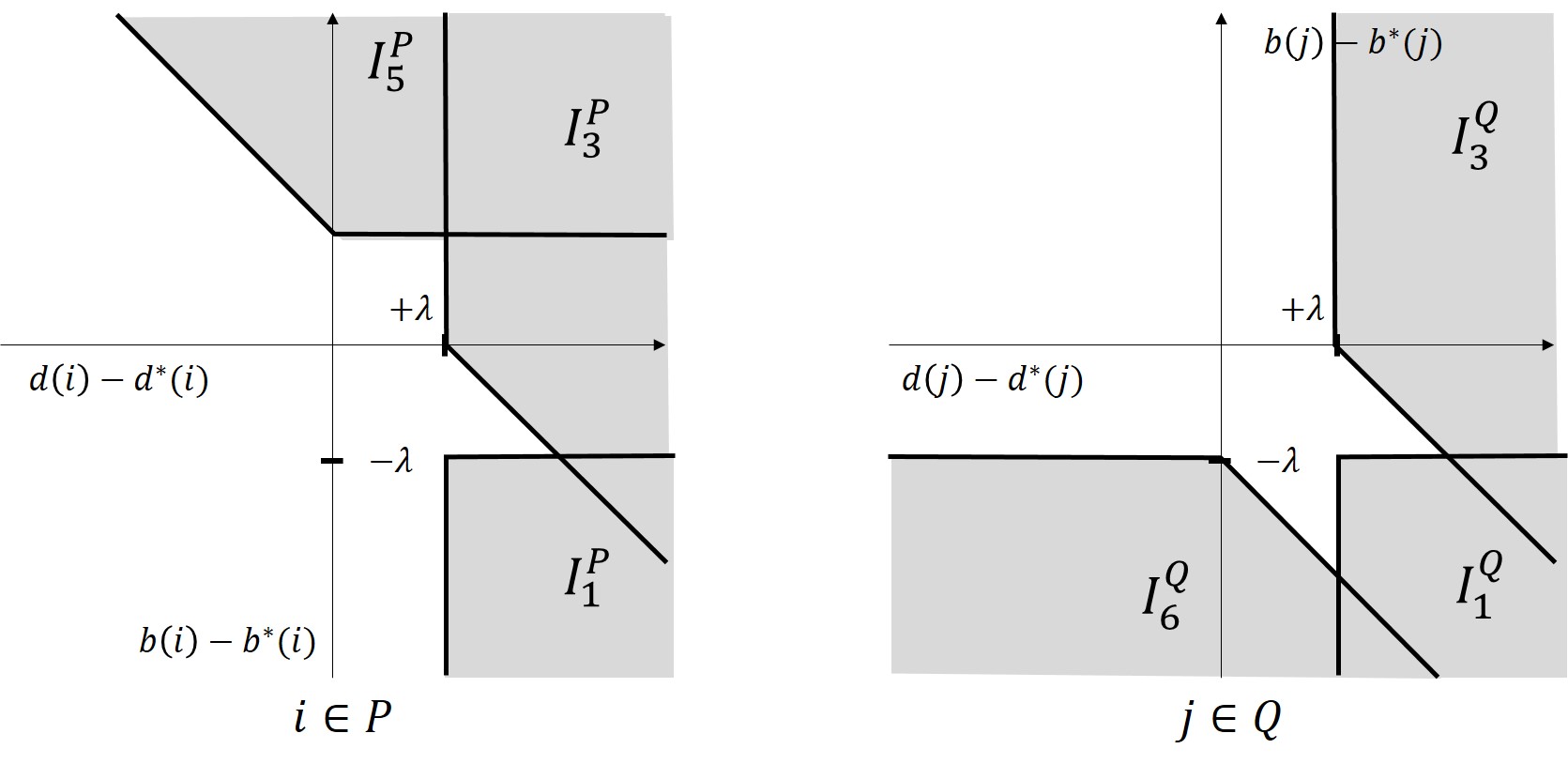}}
\caption{Regions corresponding to Case P2--Q4. 
\label{fig:P2Q4}
} 
\end{center}
\end{figure}

 The cases P3--Q1, P3--Q2, P4--Q1, and P4--Q2
are also symmetric to the cases P1--Q3, P1--Q4, P2--Q3, and P2--Q4,
and therefore we obtain the following bound for the cases:

\begin{lemma}
\label{lem:p3-q1-bound}
 We have $\| x-x^* \|_1 < 10 \lambda n$
if one of the cases P3--Q1, P3--Q2, P4--Q1, and P4--Q2 occurs.
\end{lemma}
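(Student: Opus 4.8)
The plan is to reduce each of the four combinations to one already settled in Lemma~\ref{lem:p1-q3-bound}, by exploiting the symmetry that interchanges $P$ with $Q$ and simultaneously reverses the sign of every difference. Formally, let $\sigma$ be the involution acting on the difference data by
\[
 \sigma:\ (d-d^*,\,b-b^*,\,x-x^*,\,P,\,Q)\ \longmapsto\
 \bigl(-(d-d^*),\,-(b-b^*),\,-(x-x^*),\,Q,\,P\bigr).
\]
Three invariances make $\sigma$ usable. First, the target quantity $\|x-x^*\|_1$ is unchanged, since negation preserves the $\ell_1$-norm and relabelling $P,Q$ only permutes coordinates. Second, the balance equation $(x-x^*)(N)=0$ is preserved. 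Third, the pair \eqref{eqn:x-x*-ineq} is preserved, because negation together with the $P\leftrightarrow Q$ swap carries $x(P)\le x^*(P)$ into $x(Q)\ge x^*(Q)$ and conversely. These are precisely the structural facts on which the proof of Lemma~\ref{lem:p1-q3-bound} rests.

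First I would record the action of $\sigma$ on the index sets. Reading off \eqref{eqn:prox-cond-1}--\eqref{eqn:prox-cond-6}, reversing the signs of the differences interchanges $I_1\leftrightarrow I_2$, $I_3\leftrightarrow I_4$, and $I_5\leftrightarrow I_6$ as subsets of $N$; composing with the swap of $P$ and $Q$, the set playing the role of $I^P_t$ in the transformed instance is $I^Q_{t'}$ of the original (with $t'$ the partner of $t$ under $3\leftrightarrow4$ and $5\leftrightarrow6$), the global sets $I_1,I_2$ are exchanged, and each threshold inequality is reversed (so $b(N)-b^*(N)>-\lambda$ is matched with $b(N)-b^*(N)<\lambda$, and likewise for $d(N)-d^*(N)$). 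Matching these against the definitions of P1--P4 and Q1--Q4, one finds that $\sigma$ carries the $P$-type conditions P1, P2, P3, P4 to the $Q$-type conditions Q2, Q1, Q4, Q3 (and the $Q$-type conditions to the $P$-type symmetrically). Consequently $\sigma$ pairs the four combinations of this lemma with the four of Lemma~\ref{lem:p1-q3-bound}:
\[
 \text{P3--Q1}\leftrightarrow\text{P2--Q4},\quad
 \text{P3--Q2}\leftrightarrow\text{P1--Q4},\quad
 \text{P4--Q1}\leftrightarrow\text{P2--Q3},\quad
 \text{P4--Q2}\leftrightarrow\text{P1--Q3}.
\]

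The argument then closes as follows. Given an instance in, say, case P4--Q2, applying $\sigma$ produces difference data satisfying exactly the hypotheses of case P1--Q3, together with \eqref{eqn:x-x*-ineq} and $(x-x^*)(N)=0$. Since the proof of Lemma~\ref{lem:p1-q3-bound} manipulates only these differences through the structural facts above and the emptiness conditions of the $I_t$, it applies verbatim to the transformed data and yields the bound $10\lambda n$ for its $\ell_1$-norm, which equals $\|x-x^*\|_1$ by the first invariance. The remaining three combinations are disposed of identically through their images P2--Q4, P1--Q4, and P2--Q3.

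The only genuine obstacle is that $\sigma$ is \emph{not} a symmetry of the problem DR$'$ itself: negating $d$ and $b$ destroys nonnegativity and the side constraints, so one cannot ``transform the optimal solutions and rerun the proof.'' The reduction is legitimate only because the proof of Lemma~\ref{lem:p1-q3-bound} never invokes feasibility, optimality, or nonnegativity as such; it uses solely \eqref{eqn:x-x*-ineq}, the balance equation, and the combinatorial case hypotheses. I would therefore phrase the whole transfer at the level of the difference vectors, verifying once that the hypotheses the proof relies on---namely the emptiness conditions producing \eqref{eqn:lem:p1-q3-bound:3} and \eqref{eqn:lem:p1-q3-bound:4} (in particular, the condition on $I_1$ or $I_2$ needed in the target proof is the $\sigma$-image of the complementary $I_2$- or $I_1$-emptiness built into the originating case P3 or P4) and the two threshold conditions---are exactly those supplied by the target case. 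With this bookkeeping in place, the four stated bounds follow at once.
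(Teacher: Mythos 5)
Your proof is correct and takes the same route as the paper, which disposes of these four cases by appealing to symmetry with Lemma~\ref{lem:p1-q3-bound}; you simply make that symmetry explicit. In fact your version is more careful than the paper's one-line assertion: you correctly identify that the only usable involution is sign-reversal of the differences combined with the $P\leftrightarrow Q$ swap (a pure relabelling would violate \eqref{eqn:x-x*-ineq}), you work out the resulting pairing P3--Q1~$\leftrightarrow$~P2--Q4, P3--Q2~$\leftrightarrow$~P1--Q4, P4--Q1~$\leftrightarrow$~P2--Q3, P4--Q2~$\leftrightarrow$~P1--Q3, and you rightly note that the transfer is legitimate only because the proof of Lemma~\ref{lem:p1-q3-bound} operates purely at the level of the difference vectors.
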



\begin{thebibliography}{99}


\bibitem{FHS2022}
D. Freund, S.G.~Henderson, and  D.B.~Shmoys,  
Minimizing multimodular functions and
	allocating capacity in bike-sharing systems.
Oper. Res. 70 (2022), 2715--2731.


\bibitem{Hoch94}
D.S. Hochbaum,
Lower and upper bounds for the allocation problem and other nonlinear
optimization problems.
{Math. Oper. Res.} 19 (1994), 390--409.

\bibitem{HochShant90}
D.S. Hochbaum and J.G. Shanthikumar, Nonlinear separable optimization
is not much harder than linear optimization. J. ACM 37 (1990), 843--862.



\bibitem{MoriMuro2018}
S. Moriguchi and K. Murota,
On fundamental operations for multimodular functions.
{J. Oper. Res. Soc. Japan} 62 (2019), 53--63.



\bibitem{MST11}
S. Moriguchi, A. Shioura, and N. Tsuchimura,
M-convex function minimization by continuous relaxation approach:
proximity theorem and algorithm.
{SIAM J. Optim.} 21 (2011), 633--668.



 \bibitem{Murota03book}
K. Murota.
 \textit{Discrete Convex Analysis}. 
 SIAM, Philadelphia, 2003.



\bibitem{Murota05}
K. Murota,
Note on multimodularity and L-convexity. 
{Math. Oper. Res.} 30 (2005), 658--661.


\bibitem{MS99}
K. Murota and A. Shioura,
M-convex function on generalized polymatroid.
{Math. Oper. Res.} {24} (1999), 95--105.

\bibitem{Shioura04}
A. Shioura,
Fast scaling algorithms for M-convex function minimization 
with application to the resource allocation problem.
{Discrete Appl. Math.} 134 (2004), 303--316.



\bibitem{Shioura2022}
A. Shioura,
M-convex function minimization under L1-distance constraint
and its application to dock re-allocation in bike sharing system.
Math. Oper. Res. 47 (2022), 1566--1611.


\end{thebibliography}
\end{document}